\newtheorem{theorem}{Theorem}[subsection]
\newtheorem{corollary}[theorem]{Corollary}
\newtheorem{lemma}[theorem]{Lemma}
\newtheorem{proposition}[theorem]{Proposition}
\theoremstyle{definition}
\newtheorem{definition}[theorem]{Definition}
\theoremstyle{emp}
\theoremstyle{notat}
\theoremstyle{claim}
\newtheorem{claim}[theorem]{Claim}
\theoremstyle{remark}
\newtheorem{remark}[theorem]{Remark}
\theoremstyle{example}
\newtheorem{example}[theorem]{Example}
\numberwithin{equation}{subsection}
\newcommand{\isomto}{\overset{\sim}{\rightarrow}}
\title{ C\MakeLowercase{yclotomic} $p$-\MakeLowercase{adic} M\MakeLowercase{ulti}-Z\MakeLowercase{eta} V\MakeLowercase{alues} \MakeLowercase{in} D\MakeLowercase{epth} T\MakeLowercase{wo} }
\author{ S\MakeLowercase{inan} \"{U}\MakeLowercase{nver}}
\address{Ko\c{c} University, Mathematics Department. Rumelifeneri Yolu, 34450, Istanbul, Turkey  }
\email{sunver@ku.edu.tr  }
\subjclass{ }
\keywords{ }
\begin{document}
\maketitle
\noindent

\begin{abstract} 
In this paper  we compute the values of the $p$-adic multiple polylogarithms of depth two at roots of unity. Our method is to solve  the fundamental differential equation satisfied by the crystalline frobenius morphism using rigid analytic methods. The main result could be thought of as a computation in the $p$-adic theory of higher cyclotomy. We expect the result to be useful in proving non-vanishing  results since it gives quite explicit formulas. 
\end{abstract}

\section{introduction}

Let $\mathbb{G}_{m}:={\rm Spec} \, \mathbb{Q}[z,z^{-1}]$ denote the multiplicative group and  $\mu_{M},$ the kernel of multiplication by $M$ on $\mathbb{G}_{m},$ for  $M \geq1.$ Let $Y_{M}:=\mathbb{G}_{m} \setminus \mu_{M},$ and $Y_{M} '$ denote the base change of $Y_{M}$ to $L:=\mathbb{Q}(\zeta),$ where $\zeta$ is a primitive $M$-th root of unity.  Choosing appropriate (tangential) basepoints Deligne and Goncharov define the unipotent motivic fundamental group $\pi_{1} ^{mot} (Y_{M} ' , \cdot)$ of $Y_{M} '$  whose ring of functions is  an ind-object in the tannakian category  of mixed Tate motives over $\mathcal{O}_{L}[M^{-1}]$ \cite[\textsection 5]{DG}.

The comparison between the Betti and de Rham realization of this fundamental group is completely described by the cyclotomic versions of multi-zeta values \cite[Proposition 5.17]{DG}.  
Namely, fix an  imbedding of $L$ in $\mathbb{C},$ and identify $L$ with its image. The Lie algebra of the de Rham fundamental group of $Y_{M} '$ over $\mathbb{C}$ is the free pro-nilpotent Lie algebra with  generators $\{ e_{i}\}_{0 \leq i \leq M},$ where $e_{0}$  (resp. $e_{i}$) correspond to taking residues at 0 (resp. $\zeta^{i}$) (cf. \textsection \ref{connection on the fundamental torsor} below). Hence the $\mathbb{C}$-valued points of the de Rham fundamental group is the set of group-like elements in the  (non-commutative) formal power series ring $\mathbb{C} \langle \langle e_{0}, \cdots, e_{M} \rangle \rangle $ (loc. cit.).  The image of the  Betti path from the tangential basepoint 1 at 0 to the tangential basepoint -1 at 1 under the de Rham-Betti comparison isomorphism then gives a group-like element $_{1}\gamma_{0}$ in $\mathbb{C} \langle \langle e_{0}, \cdots, e_{M} \rangle \rangle .$ Then by \cite[Proposition 5.17]{DG}, the coefficient of $e_{0} ^{s_{m}-1}e_{i_{m}} \cdots e_{0} ^{s_{1}-1}e_{i_{1}}$  in $_{1}\gamma_{0},$ where $M \geq i_{m}, \cdots,i_{1}  \geq 1$ and $s_{m}>1,$  is 
$$
(-1)^{m} \sum _{n_{m}> \cdots >n_{1}>0}\frac{\zeta ^{ i_m(n_{m-1}-n_{m} ) + \cdots + (-i_{1}n_{1})  }  }{n_{m} ^{s_{m}} \cdots n_{1} ^{s_{1}}}.
$$
The study of these numbers is the Hodge-theoretic analog of higher cyclotomy \cite{gon}. 

The main result below is the crystalline analog of the above for $m \leq 2.$ We describe this in more detail.  Letting $p$ be a prime which does not divide $M,$  $\pi_{1} ^{mot} (Y_{M} ',\cdot)$  has good reduction modulo $p,$ and hence one would expect a crystalline realization of this motive at $p.$ This is completely described by the frobenius action on  the de Rham fundamental group $\pi_{1,dR}(X_{M} ', \cdot),$ where $X_{M} '$ is the base change of  $Y_{M}'$  to $\mathbb{Q}_{p} (\zeta).$  Let $g_{i}$ denote the image of the canonical de Rham path from the tangential basepoint 1 at 0 to the tangential basepoint 1 at $\zeta ^{i/p}$ (c.f. \textsection \ref{Cyclotomic p-adic multi-zeta values}).  As above $g_{i}$ is naturally in $\mathbb{Q}_{p}(\zeta) \langle \langle e_{0}, \cdots, e_{M} \rangle \rangle .$ The main result of the paper,  Theorem \ref{Theorem main} below,  gives an explicit formula for the coefficient of $e_{j}e_{0} ^{s-1} e_{k} e_{0} ^{t-1}$ in $g_{i},$ in terms of iterated sums, exactly as above.  Since $g_{i}$ is group-like, this also determines  the coefficients of terms of the form $e_{0} ^{r-1}e_{j}e_{0} ^{s-1} e_{k} e_{0} ^{t-1}.$  This might be thought of as the $p$-adic theory of higher cyclotomy in depth two.

Next we describe the contents of the paper.  In \textsection 2, we review the de Rham and crystalline fundamental groups of a curve in a way which will be suitable for our purposes. In particular, using the horizontality of the frobenius with respect to the canonical connection we arrive at the fundamental differential equation (\ref{diff eq for frobenius}). At the end of this section, we fix the notation for what follows. 
In \textsection 3, we obtain a certain relation between the coefficients of the power series expansions of rigid analytic functions on $\mathcal{U}_{M},$  which is essential for the computations (Corollary \ref{cor of main rigid}). In \textsection 4, we  compute of the polylogarithmic part which is fairly straightforward. Next there is a section on the type of iterated sums that appear in the computations. These functions will appear as coefficients of the power series expansions above and will satisfy the hypotheses of Corollary \ref{cor of main rigid}, so the inductive process will continue. In \textsection 6, we will proceed with the computation, and finish with the main result in Theorem \ref{Theorem main}.

\section{The fundamental differential equation}

Fix a prime $p,$ which does not divide $M.$ Let $X_{M}$ and $X_{M} '$ denote the base change of $Y_{M}$ to $\mathbb{Q}_{p}$ and $\mathbb{Q}_{p} (\zeta)$. Let $A_{M}$ and $A_{M} '$ denote the rings of  regular functions on $X_{M}$ and $X_{M} '$ respectively.   Finally, let $D_{M}  := \overline{X}_{M} ' \setminus X_{M} ' .$

\subsection{The de Rham fundamental group of $X_{M}' $} We review the theory of the de Rham fundamental group \cite[10.24-10.53, \textsection 12]{De}, \cite[\textsection 4, \textsection 5]{un} in the case of  $X_{M}'.$ 

\subsubsection{The fundamental torsor}\label{fundamental torsor} 
 
Let $K$ be any field of characteristic 0 and $X/K$ be a smooth and geometrically connected curve and ${\rm Mic}_{uni}(X/K)$ denote the category of vector bundles with integrable connection which are { \it unipotent}, i.e. that have a finite separated and exhaustive filtration by sub-bundles with connection such that the non-zero graded pieces are the trivial line bundle with connection. This category naturally forms a tensor category over $K$ in the sense of \cite[\textsection 5.2]{De}, \cite{De-Tann}.

Let $S/K$ be a scheme over $K$ and let ${\rm Vec}_{S}$ denote the category of locally free sheaves of finite rank on $S$ and   $\omega: {\rm Mic}_{uni}(X/K) \to {Vec}_{S}$ be a fiber functor \cite[\textsection 5.9]{De}. Then to $\omega$ there is associated a $K$-groupoid acting over $S$ \cite[\textsection 1.6]{De-Tann} called the {\it fundamental groupoid} of $X$ at $\omega$ and denoted by $\mathcal{P}_{dR}(X,\omega) .$  The fundamental groupoid is faithfully flat and affine over $S\times _{K} S$ and represents the functor on the category of $S \times _{K} S$-schemes whose $T$-valued points for any  $\pi: T \to S \times _{K} S $ is the set of $\otimes$-isomorphisms from $ \pi ^* p_{2} ^{*} \omega$ to $\pi ^* p_{1} ^{*} \omega ,$ where $p_{1} , \, p_{2} : S\times _{K} S \to S$ are the projections \cite[\textsection 1.11, Th\'{e}or\`{e}me 1.12]{De-Tann}.  
  
 Taking the cartesian product of   $\mathcal{P}_{dR}(X, \omega)  \to S \times _{K} S $ with the diagonal $\Delta : S  \to S \times_{K} S$ gives  $\pi_{1,dR} (X, \omega ),$  the {\it fundamental group}  of  $X $ at the fiber functor $\omega . $ 
 
 Let $x \in S(K)$ then attaching $\mathcal{F}(x),$ the fiber of $\mathcal{F}$ at $x,$ to $\mathcal{F} \in {\rm Vec}_{S}$ gives rise to a fiber functor 
 $$
 \omega _{x}: {\rm Mic}_{uni} (X/K) \to {\rm Vec}_{K} .
 $$  
 
 Pulling back $\mathcal{P}_{dR}(X, \omega) \to S \times _{K} S$ via the inclusion $S \to S \times _{K}S$ that sends $s$ to $(s,x)$ we obtain a torsor  $\mathcal{T}_{dR}(X,\omega)_{x} $ on $S$ under the group scheme $\pi_{1,dR}(X,\omega_{x}).$

\subsubsection{The de Rham fiber functor on $X_{M}$}\label{de Rham fiber functor} From now on we assume that the smooth projective model $\overline{X}$ of $X$ is isomorphic to  $\mathbb{P}^{1}.$ In this case,  there is a canonical fiber functor \cite[\textsection 12]{De}:
$$
\omega(dR): {\rm Mic}_{uni}(X/K) \to {\rm Vec}_{K}
$$
defined as follows. 

For any $(E,\nabla) \in {\rm Mic}_{uni}(X/K)$ let $(E_{can},\nabla)$ denote the unique vector bundle with connection on $\overline{X}$ that has  logarithmic singularites with nilpotent residues at $\overline{X}\setminus X.$  The pair $(E_{can},\nabla)$ is called the canonical extension of the unipotent vector bundle with connection $(E,\nabla).$ Since $H^{1}(\overline{X},\mathcal{O})=0,$ the bundle $E_{can}$ is trivial \cite[Proposition 12.3]{De} and the functor $\omega_{dR}$ defined as 
$$
\omega_{dR} (E,\nabla):= \Gamma(\overline{X},E_{can})
$$
is a fiber functor \cite[\textsection 12.4]{De}. For a subscheme $Y$ of $\overline{X}$ let 
$$
\omega(Y): {\rm Mic}_{uni}(X/K) \to {\rm Vec}_{Y}
$$ 
denote the fiber functor that sends $(E,\nabla)$ to $E_{can}|_{Y}.$  There are canonical isomorphisms
\begin{align}\label{isom of fiber}
\omega_{dR} \otimes _{K} \mathcal{O}_{Y}\cong \omega(Y)
\end{align}
of fiber functors.

Let $\mathcal{P}_{dR}:= \mathcal{P}_{dR}(X,\omega(X)),$ $\mathcal{T}_{dR,x}:=\mathcal{T}_{dR}(X, \omega(X) )_{x},$  $\overline{\mathcal{P}}_{dR}:= \mathcal{P}_{dR}(X,\omega(\overline{X}))$ and $\overline{\mathcal{T}}_{dR,x}:=\mathcal{T}_{dR}(X, \omega(\overline{X}) )_{x}.$ Finally let $\mathcal{T}_{dR}$ and $\overline{\mathcal{T}}_{dR}$ denote the torsors $\mathcal{T}_{dR,x}$ and $\overline{\mathcal{T}}_{dR,x}$ after the identification (\ref{isom of fiber}) of $\omega_{dR}$ with $\omega(x).$ Thus they are torsors under $\pi_{1,dR}(X):=\pi_{1,dR}(X,\omega_{dR})$  and depend only on $X.$  
 
\subsubsection{Connection on the fundamental torsor}\label{connection on the fundamental torsor}  Let $\Delta_{X}$ denote the diagonal in $X \times _{K}X  $ and $\Delta_{X} ^{(1)}$ denote its first infinitesimal neighborhood. By the definition of $\mathcal{P}_{dR},$ the sections of its restriction to $\Delta_{X} ^{(1)}$ are $\otimes $-isomorphisms from $p _{2} ^{(1)*} \omega(X)$ to $p_{1} ^{(1)*} \omega(X)$ where $p_{i} ^{(1)} : \Delta_{X} ^{(1)} \to X $ are the two projections. If $(E,\nabla) \in {\rm Mic}_{uni}(X/K),$ then  $p _{i} ^{(1)*} \omega(X)(E,\nabla)=p _{i} ^{(1)*}(E) $ and the connection $\nabla$ induces an isomorphisms from  $p _{2} ^{(1)*}(E)$ to $p _{1} ^{(1)*}(E)$ reducing to the identity on the diagonal. This in turn induces an isomorphism between the above fiber functors, and hence a section of $\mathcal{P}_{dR} |_{\Delta_{X} ^{(1)}}$ over  $\Delta_{X} ^{(1)}$ which is the identity section when restricted to $\Delta_{X} .$

Therefore there is a   canonical section of the restriction of  $\mathcal{P}_{dR}$  to $\Delta_{X} ^{(1)}$ which is the identity section on $\Delta_{X}.$ This gives  a connection on the $\pi_{1,dR}(X)$-torsor $\mathcal{T}_{dR}.$ Note that because of the canonical isomorphisms (\ref{isom of fiber})  $\mathcal{T}_{dR}\cong \pi_{1,dR}(X) \times_{K} X $ and $\overline{\mathcal{T}}_{dR}\cong \pi_{1,dR}(X) \times _{K} \overline{X}.$ A connection on $\mathcal{T}_{dR}$ is  nothing other than a morphism $\Delta_{X} ^{(1)} \to \pi_{1,dR}(X)$ whose restriction to $\Delta_{X}$ is the constant map to the identity element of $\pi_{1,dR}(X).$ This in turn is equivalent to giving a section of ${\rm Lie}\, \pi_{1,dR}(X) \otimes_{K} \Gamma(X, \Omega ^{1} _{X/K}).$  By \cite[\textsection 12.12]{De},  the connection on $\mathcal{T}_{dR}$ is the one that corresponds to the canonical section $\alpha$ of 
$H_{dR} ^{1}(X) \check{ } \otimes _{K}H^{1} _{dR}(X) \subseteq {\rm Lie}\, \pi_{1,dR}(X) \otimes_{K} \Gamma(X, \Omega ^{1} _{X/K}).$

From now on we let $X=X_{M} '$ and $K=\mathbb{Q}_{p} (\zeta).$ The image of $\alpha$ in ${\rm Lie}\, \pi_{1,dR}(X) \otimes_{K} \Gamma(X, \Omega ^{1} _{X/K})$ under the canonical maps above can be described as follows. For any $x \in \overline{X} \setminus X$ and  $(E,\nabla),$ we have the residue endomorphism 
$$
{\rm res}_{x}: E_{can}(x) \to E_{can}(x),
$$
induced by the map that sends the local section $u$ of $E_{can}$ near $x,$ to $(\nabla(u), t \frac{\partial}{\partial t}),$ where $t$ is a uniformizer at $x.$ The residue endomorphism is independent of the choice of a uniformizer and satisfies, 
$$
{\rm res}_{x}((E_{1},\nabla_{1}) \otimes (E_{2},\nabla_{2}))=1 \otimes {\rm res}_{x}(E_{2},\nabla_{2})+{\rm res}_{x}(E_{1},\nabla_{1})\otimes 1.
$$
Hence ${\rm res}_{x} \in {\rm Lie} \, \pi_{1,dR}(X, \omega(x)).$

Under the identification (\ref{isom of fiber}),  for $1 \leq i \leq M,$ we let $e_{i} \in {\rm Lie}\, \pi_{1,dR}(X_{M} ') $ correspond to ${\rm res}_{\zeta^{i}}$ and $e_{0}$ to ${\rm res}_{0}.$ If we also put $\omega_{0}:=d log z$ and $\omega_{i}:= d log (z-\zeta ^{i}),$ for $1 \leq i \leq M,$ then the section of ${\rm Lie} \, \pi_{1,dR}(X_{M} ')\hat{ \otimes} _{K} \Gamma(X,\Omega^{1} _{X_{M} '/K})   $ that corresponds to the connection on $\mathcal{T}_{dR}$ is 
$
\sum _{0 \leq i \leq M}e_{i} \omega_{i}.
$

The de Rham fundamental group of $X_{M} '$ has a simple description. For any $K$-algebra $A,$  denote the 
associative (non-commutative) algebra of  formal power series in $\{ e_{i} | 0 \leq i \leq m \}$ over $A$ by $A \langle \langle e_{0}, \cdots , e_{M}  \rangle \rangle $ and let 
$$
\mathcal{U}_{dR}(A):= A \langle \langle e_{0}, \cdots , e_{M}  \rangle \rangle.
$$  
Then the universal enveloping algebra of $\pi_{1,dR}(X_{M} ')$ is 
$
\mathcal{U}_{dR}(X_{M} ')(K).$The co-product of the Hopf algebra structure on $\mathcal{U}_{dR}(A)$ is induced by the fact that $e_{i}$ are primitive elements:  $\Delta(e_{i}) =1 \otimes e_{i} + e_{i} \otimes 1,$ for  $1 \leq i \leq M.$ The $A$-valued points of $\pi_{1,dR}(X_{M} ')$ then correspond to the group-like elements in $\mathcal{U}_{dR}(A),$ i.e. elements $g$ satisfying $\Delta(g)=g \hat{\otimes} g$ and  with constant term equal to 1.  For any $g$ let $\underline{g}$ denote the image of $g$ under the Hopf algebra automorphism of $\mathcal{U}_{dR}(A)$ that sends $e_{i}$ to $p^{-1}e_{i},$ for all $i.$ 
 
The canonical connection on $\mathcal{T}_{dR}=\pi_{1,dR}(X_{M} ')\times X_{M} '$ can be described as follows. A section of $\mathcal{T}_{dR}$ over  $X_{M} '$ is given by a group-like element in $\alpha(z) \in \mathcal{U}_{dR}(A_{M} '),$ where $z$ denotes the parameter on ${\rm Spec}\, A_{M} '=X_{M}' \subseteq \mathbb{A}^{1} _{K}.$  Let 
$$
d: \mathcal{U}_{dR} (A_{M} ') \to \mathcal{U}_{dR}(A_{M} ') \hat{\otimes} _{A_{M} '} \Omega^{1} _{A_{M} ' /K}  
$$
denote the continuous differential extending the canonical differential $A_{M} ' \to \Omega^{1} _{A_{M} ' /K}$ such that $d(e_{i})=0,$ for $0 \leq i \leq M.$ In other words, applying $d$ to an element $\alpha(z)$ amounts to applying $d$ to each coefficient of $\alpha(z).$ 

With this notation, the image of $\alpha(z) $  in ${\rm Lie}\, \pi _{1,dR}(X_{M} ') \hat{\otimes} \Omega^{1} _{A_{M} '/K}$ under the canonical  connection $\nabla$ on $\mathcal{T}_{dR}$ is:

$$
\nabla(\alpha(z))=  \alpha(z) ^{-1} d \alpha(z)- \alpha(z)^{-1}(\sum _{0 \leq i \leq M}e_{i}\omega_{i})\alpha(z).
 $$

\subsection{Crystalline fundamental group of $X_{M} '$}  
We review the theory of the crystalline fundamental group as described in \cite[\textsection 11]{De} and \cite[\textsection 2.4]{un}. The comparison theorem between the crystalline and de Rham fundamental groups will give us the frobenius map which will be central to what follows. 

\subsubsection{The de Rham-crystalline comparison} 
Let  $k$ be a perfect field of characteristic $p,$ with $W$ the ring of Witt vectors and $K$ its field of fractions. For a smooth variety $Y/k,$ we have  ${\rm Isoc}_{uni} ^{\dagger} (Y/W),$ the category of unipotent overconvergent isocrystals on $Y/W$ \cite[\textsection 2.4.1]{un}, whose fundamental group at a fiber functor $\omega$ is the crystalline fundamental group, $\pi_{1,crys} ^{\dagger}(Y,\omega),$ of $Y.$  Now suppose that  $Y$ has a smooth compactification $\overline{Y}/k$ such that $D:=\overline{Y} \setminus Y$ is a simple normal crossings divisor in $\overline{Y},$ and let $\overline{Y}_{\log}$ denote the canonical log structure on $\overline{Y}$ associated to the divisor $D.$   Shiho's theorem \cite{shi} implies that  the restriction functor  
\begin{align}\label{log crys}
{\rm Isoc}_{uni} ^{c} (\overline{Y}_{log}/W) \to {\rm Isoc}_{uni} ^{\dagger} (Y/W)
,
\end{align} from the category of unipotent convergent log isocrystals on $\overline{Y}_{log}$ to ${\rm Isoc}_{uni} ^{\dagger} (Y/W),$ is an equivalence of categories \cite[Lemma 2]{un}. 
  
 This is in complete analogy with the situation over the field $K$ of characteristic 0. If $X/K$ is a smooth variety with a smooth compactification $\overline{X}/K $ and with  simple normal crossings divisor $E:=\overline{X}\setminus X$ in $\overline{X}$ then the restriction 
 \begin{align}\label{log de rham}
 {\rm Mic}_{uni} (\overline{X}_{\log}/K) \to {\rm Mic}_{uni} (X/K)
 \end{align}
 gives an equivalence of categories \cite[II.5.2]{De-eq}. 
 
 The de Rham-crystalline comparison can be described as follows.  Suppose that $\overline{\mathcal{Z}}/W$ is a smooth, projective scheme with geometrically connected fibers and  with  $\mathcal{F} \subseteq \overline{\mathcal{Z}}$  a relative simple normal crossings divisor. Let $\mathcal{Z}:= \overline{\mathcal{Z}}\setminus \mathcal{F},$ and let $(\overline{X}, X, E)$ and $(\overline{Y},Y,D)$ denote the corresponding data over the generic and special fibers respectively.   
 The canonical functor
 $$
  {\rm Mic}_{uni} (\overline{X}_{\log}/K) \to {\rm Isoc}_{uni} ^{c} (\overline{Y}_{log}/W)
 $$ 
is an equivalence which, when combined with (\ref{log de rham}) and (\ref{log crys}) gives the equivalence 
\begin{align}
  {\rm Mic}_{uni} (X/K) \to {\rm Isoc}_{uni} ^{\dagger} (Y/W).
 \end{align}

Choosing a (tangential) basepoint $z$ on $\mathcal{Z},$ we get an isomorphism 
 $$
 \pi_{1,crys} ^{\dagger}(Y,y) \isomto \pi_{1,dR}(X,x) ,
 $$ where $x$ and $y$ are the generic and special fibers of $z.$ 
 
Let $\sigma: W \to W$ denote the lifting of the $p$-power  frobenius map on $k,$ and let $\mathcal{Z}^{(p)},$ denote the base change of $\mathcal{Z}/W$ via $\sigma$ and  $X^{(p)}, \, Y^{(p)}$ etc. the corresponding fibers of $\mathcal{Z}^{(p)}.$ The relative frobenius morphism induces a $\otimes$-functor $F^{*}: {\rm Isoc} ^{\dagger} _{uni} (Y^{(p)}/W) \to {\rm Isoc} ^{\dagger} _{uni} (Y/W) ,$ and hence a map $F_{*} :  \pi_{1,crys} ^{\dagger} (Y,y) \to \pi_{1,crys} ^{\dagger} (Y^{(p)},y^{(p)}).$  This, together with the above isomorphism, gives a morphism\begin{align}\label{frob on de rham}
F_{*}: \pi_{1,dR}(X,x) \to \pi_{1,dR}(X^{(p)},x^{(p)}).
\end{align}
Similarly, for a pair of (tangential) basepoints $z_{1}$ and $z_{2}$ we obtain a morphism
\begin{align}
F_{*}:  \,_{x_{2}}\mathcal{P}_{dR}(X)_{x_{1}} \to  \,_{x_{2} ^{(p)}}\mathcal{P}_{dR}(X^{(p)})_{x_{1} ^{(p)}}.
\end{align}

\subsubsection{Tangential basepoints in  the crystalline case} Tangential basepoints in dimension 1 are explained in detail in \cite[\textsection 15]{De} and \cite[\textsection 3]{un}. 

Let $\mathcal{Z}/W$ be as above with relative dimension 1, for simplicity, and let $z \in (\overline{\mathcal{Z}} \setminus \mathcal{Z})(W)$ with fibers $x$ and $y.$    Let $T_{z} ^{\times}(\overline{ \mathcal{Z}})/W$ denote the tangent space of $\overline{ \mathcal{Z}}$ at $z$ with the zero section removed. It is (non-canonically) isomorphic to $\mathbb{G}_{m}/W.$ Fix $w \in T_{z} ^{\times}(\overline{ \mathcal{Z}})(W),$ with fibers $v \in T_{y} ^{\times}(\overline{ Y})(k)$ and $u \in T_{x} ^{\times}(\overline{ X})(K).$ The {\it crystalline tangential basepoint at} $u$ is a fiber functor 
$$
\omega_{u} :  {\rm Isoc} ^{\dagger} _{uni} (Y/W) \to {\rm Vec}_{K}. 
$$
Corresponding to the lifting $\mathcal{Z}, z $ and $w$ and the identification of ${\rm Isoc}_{uni} ^{\dagger} (Y/W)$ with ${\rm Mic}_{uni} (X/K)$ described above this fiber functor corresponds to the fiber functor 
$$
\omega_{v}: {\rm Mic}_{uni}(X/K) \to {\rm Vec}_{K}
$$
which associates to $(E,\nabla)$ the fiber $E_{can}(x)$ of its canonical extension at $x.$ The effect of choosing different liftings and the frobenius action of the tangential basepoint are explained in detail in \cite[\textsection 3]{un}.

\subsubsection{Cyclotomic p-adic multi-zeta values}\label{Cyclotomic p-adic multi-zeta values} Let $t_{0}$ denote the tangent vector 1 at 0 and $t_{i}$ denote the tangent vector 1 at $\zeta^{i},$ for $1 \leq i \leq M.$ Also for $1\leq i \leq M,$ let $\overline{i}$ denote  the unique integer  such that $1 \leq \overline{i} \leq M$ and $M| ( \overline{i} -pi).$ Similarly, let $\underline{i}$ denote the unique integer such that $1 \leq \underline{i} \leq M$ and $M | (i - p \underline{i})$ and let $\underline{0}=\overline{0}=0.$  By (\ref{isom of fiber}), for (tangential) basepoints $x_{i}$ on $X_{M} ',$ there are canonical isomorphisms between $\omega_{x_{i}}.$ This gives a canonical element $_{x_{2}}\gamma  _{x_{1}}$ of $_{x_{2}}\mathcal{P}_{dR}(X_{M} ')_{x_{1}},$ which we call the {\it canonical de Rham path} from $x_{1} $ to $x_{2}.$ 
 
For any $1 \leq i \leq M,$ we have elements $_{t_{0}}\gamma_{t_{i}}\cdot F_{*}(\, _{t_{\underline{i}}}\gamma_{t_{0}})  \in \pi_{1,dR}(X_{M} ' , t_{0})(K),$ with $K=\mathbb{Q}_{p}(\zeta).$  Identifying $\omega_{t_{0}}$ with $\omega_{dR}$ using (\ref{isom of fiber}), we obtain elements 
$$
g_{i} \in \pi_{1,dR}(X_{M} ' )(K) \subseteq K\langle \langle e_{0}, \cdots , e_{M} \rangle \rangle,
$$  
for $1 \leq i \leq M.$ Let $g:=g_{M}.$ We denote the coefficient of the monomial  $e_{i_{1}}\cdots e_{i_{n}} $ in $g$ by $g[e_{i_{1}} \cdots e_{i_{n}}]$ and call it a {\it cyclotomic p-adic multi-zeta value}.

The $p$-adic cyclotomic multi-zeta values completely determine the frobenius action on $\pi_{1,dR}(X_{M} ', t_{0})\isomto \pi_{1,dR}(X_{M} ')$ as follows.  

First note that
\begin{align}
 F_{*}(e_{0})=pe_{0},  \; F_{*}(e_{i})=p g_{\overline{i}} ^{-1}e_{\overline{i}} g_{\overline{i}}.
 \end{align}
On the other hand, all the $g_{i}$ are determined by $g$ through functoriality.   
Let $\alpha_{i}$ denote the automorphism of $X_{M} '$ given by $\alpha_{i}(z)=\zeta ^{i} z. $   Then 
$\alpha_{i *}(e_{0})=e_{0}$ and $\alpha_{i*}(e_{j})=e_{i+j},$ where $i+j$ is between $1$ and $M$ computed modulo $M.$ On the special fiber we have $F \circ \alpha_{\underline{i}}=\alpha_{i} \circ F.$ By the functoriality of frobenius we have 
\begin{align}\label{alpha eq}
\alpha_{i*}(g_{j})=g_{i+j}.
\end{align}

\subsubsection{The differential equation satisfied by the frobenius}
Let us first recall the explicit description of the frobenius on ${\rm Mic}_{uni}(\overline{X}_{M,\log} '/K),$ explained in detail in \cite[\textsection 2.4.2]{un}. Let $\overline{\mathcal{P}}_{M}/W $ denote the formal scheme  which  is the completion of $\overline{X}' _{M} $ along the closed fiber and let $\mathcal{D}_{M}$ denote the divisor obtained by completing $D_{M}.$ Let $ \{ \overline{\mathcal{P}}_{i} \}_{1 \leq i\leq n}$  be an open cover of $\overline{\mathcal{P}}_{M},$ and $\mathcal{F}_{i}: \overline{\mathcal{P}}_{i} \to \overline{\mathcal{P}}_{i}$ be a lifting of the frobenius such that $\mathcal{F} _{i} ^{*} (\mathcal{D}_{M} \cap \overline{P}_{i})=p \cdot (\mathcal{D}_{M} \cap \overline{P}_{i}).$ For a formal scheme $\mathcal{P}/W,$ let $\mathcal{P}_{K}$ denote the associated rigid analytic space over $K.$ Now given $(E,\nabla)$ in ${\rm Mic}_{uni} (\overline{X}' _{M,\log}),$ its   pull-back via the frobenius is defined as the vector bundle with connection whose restriction to $\overline{P}_{iK}$ is given by $\mathcal{F}_{i,K} ^{*} (E,\nabla) | _{\overline{\mathcal{P}}_{i,K}  }.$ The isomorphisms between the different pull-backs are given by using the fact that the connections converge within a $p$-adic disk of radius one and that the different liftings of the frobenius lie in the same disk \cite[\textsection 2.4.2]{un}.

Let $\overline{\mathcal{P}}$ denote the completion of $X_{M} ' \cup \{ 0, \infty\}$ along the closed fiber and let  $\mathcal{F}(z)=z^{p}.$ Then $\mathcal{F}: \overline{\mathcal{P}} \to \overline{\mathcal{P}}$ is a lifting of frobenius that satisfies $\mathcal{F} ^{*} (0)=p\cdot (0) $ and $\mathcal{F}^{*} (\infty)=p \cdot (\infty).$   We identify the $\pi_{1,dR}(X_{M} ', t_{0} )$-torsor of paths that start at $t_{0}$  (\textsection \ref{fundamental torsor}) with $\overline{\mathcal{T}}_{dR}$ (\textsection \ref{connection on the fundamental torsor}) by using the identification of $\omega({t_{0}})$ and $\omega_{dR}$ (\textsection \ref{de Rham fiber functor}). The principal part of $\mathcal{F}$ sends $t_{0}$ to itself \cite[\textsection 3.2.(ii)]{un}. Then by the description of the frobenius map above, we obtain the  following commutative diagram:

\[
\begin{CD}
\overline{\mathcal{T}}_{dR} | _{\mathcal{U}_{M} }  @>{\mathcal{F}_{*}}>> \mathcal{F}^{*} \overline{\mathcal{T}}_{dR}|_{\mathcal{U}_{M}} \\
@VV{\nabla}V @VV{\mathcal{F}^* \nabla}V\\
{\rm Lie} \, \pi_{1}(X_{M} ')\hat{\otimes} \Omega^{1} _{\mathcal{U}_{M}}(log(0)) @>{\rm Lie} F_{*}>> {\rm Lie} \, \pi_{1}(X_{M} ')\hat{\otimes} \Omega^{1} _{\mathcal{U}_{M}}(log(0)),
\end{CD}
\]
where $\mathcal{U}_{M} := \overline{\mathcal{P}}_{K}.$ Let $\mathcal{A}_{M}$ denote the ring of rigid analytic functions on $\mathcal{U}_{M}.$ 
Applying frobenius to the section $\gamma_{t_{0}}$ of $\overline{\mathcal{T}}_{dR}| _{\mathcal{U}_{M}} $ and denoting $\mathcal{F}_{*}(\gamma_{t_{0}}) \in \mathcal{U}_{dR}(\mathcal{A}_{M}) $ by $g_{\mathcal{F}}(z),$ we obtain the differential equation
\begin{align}\label{diff eq for frobenius}
-\sum _{0 \leq i \leq M}F_{*}(e_{i}) \omega_{i}=
g_{\mathcal{F}}^{-1} dg_{\mathcal{F}}-g_{\mathcal{F}}^{-1} (\sum_{0 \leq i \leq M} e_{i} \mathcal{F}^{*} \omega_{i}   )g_{\mathcal{F}}.
\end{align}

Putting $g_{0}=1,$ and $\underline{0}=\overline{0}=0,$ we can rewrite this as: 
\begin{align}\label{rewritingtheequation}
dg_{\mathcal{F}}=
 (\sum_{0 \leq i \leq M} e_{i} \mathcal{F}^{*} \omega_{i}   )g_{\mathcal{F}}-g_{\mathcal{F}}(\sum_{0 \leq i \leq M} pg^{-1} _{i} e_{i}  g_{i} \omega_{\underline{i}} ).
\end{align}

{\bf Notation.} In the following,  we let $n_{0}=0$ and  $d_{i}:=n_{i}-n_{i-1},$  for $i \geq 1.$  Suppose that $\underline{s}=(s_{1}, \cdots, s_{k}),$ where $s_{j}$ are positive integers, $\underline{i}:=(i_{1}, \cdots, i_{k}),$ where $1 \leq i_{j}\leq M,$ and  $\underline{\alpha}:=\{\alpha_{1},\cdots, \alpha_{r} \}  \subseteq \{n_{i} | 1 \leq i \leq k\} \cup \{d_{i}| 1 \leq i \leq k \}.$ Then  we let 
$$
S(\underline{s} ; \underline{i};\underline{\alpha})(z):=p^{\sum _{i}s_{i}}\sum \frac{z^{n_{k}}}{n_{1} ^{s_{1}} \cdots n_{k} ^{s_{k}}\zeta^{\underline{i_{1}} d_{1} +  \cdots + \underline{i_{k}}d_{k}}   },
$$
where the sum is taken over all $0<n_{1}<\cdots <n_{k},$ which satisfy both of the following two properties: 

(i) $p\not  |n_{1}$ and 

(ii) $p| \alpha_{i},$ for all $1 \leq i \leq r. $ 
 
If we take the sum over  all $0<n_{1}<\cdots <n_{k}$ which satisfy (ii) then we denote the resulting series by $T(\underline{s},\underline{i},\underline{\alpha})(z).$  We use $\underline{S}(\cdot)$  and $\underline{T}(\cdot)$ to denote $S(\cdot)$ and $T(\cdot)$ without the $p^{\sum _{i} s_{i}}$ factors.


For any power series $f \in K[[ z]],$ we let $f[w]$ denote the coefficient of $z^{w}$ in $f.$

Let 
$$
F(\underline{s} ; \underline{i};\underline{\alpha})(n):=p^{\sum _{i}s_{i}}
\sum \frac{\zeta^{\underline{i_{1}}n_{1}+\cdots +\underline{i_{k}}n_{k}  }}{n_{1} ^{s_{1}} \cdots n_{k} ^{s_{k}} },
$$
where the sum is over all $0<n_{1}< \cdots <n_{k}<n$ that satisfy (i) and (ii) above. We denote the function obtained by taking the sum over $0<n_{1} < \cdots<n$ that satisfies (ii), by $G(\underline{s},\underline{i},\underline{\alpha})(n).$ Similarly, let $\underline{F}$ and $\underline{G}$ be the versions without the $p$-power factor. 
Clearly, 
$$
\underline{S}(\underline{s} ; \underline{i})[n]=\frac{\zeta^{-\underline{i_{k}} n} }{n^{s_{k}}}\underline{F}(\underline{s}',\underline{i}')(n),
$$
where $\underline{s}'=(s_{1},\cdots,s_{k-1})$ and $\underline{i}'=(i_{2}-i_{1},\cdots,i_{k}-i_{k-1}),$ and there are similar relations for any $\underline{\alpha}$ as above. 
 
Using the definition of $L^{(k)}$ for an $M$-power series function $L$ in Example \ref{exam main power} (iv), 
we define $F(s_{1},(s_{2});i,j)$ as follows. Noting that 
$$
F(s_{1},s_{2};i,j)(n)=p^{s_{2}}\sum_{0<k<n}\frac{F(s_{1};i)(k)}{k^{s_{2}}}\zeta^{\underline{j}k},
$$
we put 
$$
F(s_{1},(s_{2});i,j)(n)=p^{s_{2}}\sum_{0<k<n}F^{(s_{2})}(s_{1};i)(k)\zeta^{\underline{j}k}.
$$
We define $F(s_{1},(s_{2});i,j;\underline{\alpha})$ analogously, and we let $F^{(\cdot)} (\cdot):=F (\cdot) ^{(\cdot)}.$  

If we put $\underline{i}=(i,j,k),$ then we define $S(a,(b),c;\underline{i})$ and  $S(a,(b),(c);\underline{i})$ as follows:
$$
S(a,(b),c;\underline{i})[n]=\frac{p^{c}\zeta^{-\underline{k}n }}{n^{c}}F(a,(b);\underline{i}')(n)
$$
and 
$$
S(a,(b),(c);\underline{i})[n]=p^{c}\zeta^{-\underline{k}n }F^{(c)}(a,(b);\underline{i}')(n).
$$
We define $S(a,b,(c);\underline{i};\underline{\alpha})$ and $S(a,(b),c;\underline{i};\underline{\alpha})$ and $S(a,(b),(c);\underline{i};\underline{\alpha})$ with analogous identities.

When the limit exists, we let 
$
\mathcal{X}^{(\cdot)}(\cdot ):= \lim_{N \to \infty} F^{(\cdot)} (\cdot)(q^N).$

\section{Rigid analytic functions on $\mathcal{U}_{M}$}

Let $D(a,r)$ and $D(a,r)^{\circ}$ denote the closed and open   disks of radius $r$ around $a.$ Then $\mathcal{U}_{M} =\mathbb{P}^{1} _{K} \setminus \cup _{1 \leq i \leq M } D(\zeta^{i},1)^{\circ} . $  We will need the following proposition, which is a generalization of (Prop. 2, \cite{un}). 

\begin{proposition}\label{main rigid analytic}
Let $f$  be a rigid analytic function on $\mathcal{U}_{M}$ with $f(0)=0$ and  a power series expansion 
$$
f(z)=\sum_{0 <n}a_{n} z^{n}
$$
around 0. Then the sequence of rational functions
$$
f_{N}(z):=\frac{1}{1-z^{Mp^{N}}} \sum_{0<n \leq Mp^N}a_{n}z^{n} 
$$
converge uniformly on $\mathcal{U}_{M}$ to $f.$ The value of $f$ at $\infty$ is given by 
$$
f(\infty)=-\lim_{N \to \infty}a_{Mp^N}.
$$

\end{proposition}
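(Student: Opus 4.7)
The strategy has three ingredients: first to confirm that $f_N$ is rigid on $\mathcal{U}_M$, then to verify the convergence on a generating family of $\mathcal{O}(\mathcal{U}_M)$, and finally to use a contractivity/density argument to pass to arbitrary $f$.

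\textbf{Setup and contractivity.} Every candidate pole of $f_N$ is a zero of $1-z^{Mp^N}$, hence an $(Mp^N)$-th root of unity. Since $(p,M)=1$ such a root factors $\eta\mu$ with $\eta\in\mu_M$ and $\mu\in\mu_{p^N}$, and $|\eta\mu-\eta|=|\mu-1|<1$ places it strictly inside the removed open disk $D(\eta,1)^{\circ}$; so $f_N\in\mathcal{O}(\mathcal{U}_M)$. The affinoid $\mathcal{U}_M$ admits the Mittag--Leffler decomposition
\[
f(z)=c+\sum_{i=1}^{M}\sum_{k\geq 1}\frac{c_{i,k}}{(z-\zeta^i)^k},\qquad |c_{i,k}|_p\to 0,
\]
orthogonal in the supremum norm, with $|f|_{\sup}=\max(|c|,\sup_{i,k}|c_{i,k}|_p)$; the condition $f(0)=0$ fixes $c$. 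Expanding $(z-\zeta^i)^{-k}$ around $0$ gives $a_n=\sum_{i,k}c_{i,k}(-\zeta^{-i})^k\binom{n+k-1}{k-1}\zeta^{-in}$, and integrality of the binomial coefficients yields the uniform bound $|a_n|_p\leq\sup_{i,k}|c_{i,k}|_p$.

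\textbf{The key arithmetic estimate.} I claim $|1-z^{Mp^N}|_p=\max(1,|z|^{Mp^N})$ on all of $\mathcal{U}_M$. Only the case $|z|=1$, $z\in\mathcal{U}_M$ is nontrivial: the reduction $\bar z$ is not an $M$-th root of unity, so $\bar z^M\neq 1$ and $|z^M-1|_p=1$; then factoring $z^{Mp^N}-1=\prod_{\mu^{p^N}=1}(z^M-\mu)$ and using $|1-\mu|_p<1$ for $\mu\neq 1$ makes each factor of norm $1$. Combining this with $|a_n|_p\leq\sup_{i,k}|c_{i,k}|_p$ in a case analysis on $|z|$ gives $|f_N|_{\sup,\mathcal{U}_M}\leq\sup_{i,k}|c_{i,k}|_p\leq|f|_{\sup}$, so $f\mapsto f_N$ is a contractive linear operator for $|\cdot|_{\sup}$.

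\textbf{Convergence on a basis and conclusion.} By contractivity it suffices to verify $f_N\to f$ uniformly on the basis elements $h_{j,k}(z):=(z-\zeta^j)^{-k}-(-\zeta^{-j})^k$ (shifted so that $h_{j,k}(0)=0$), and then approximate. Set $w:=\zeta^{-j}z$, $L:=Mp^N$, and $A_L(w):=\sum_{n=0}^{L}\binom{n+k-1}{k-1}w^n$. Since $(1-w)^{-k}=\sum_{n\geq 0}\binom{n+k-1}{k-1}w^n$, define $r_k(w,L)$ by $(1-w)^k A_L(w)=1-w^{L+1}r_k(w,L)$; a direct manipulation then gives
\[
h_{j,k}(z)-(h_{j,k})_N(z)=-\,\frac{(-\zeta^{-j})^{k}\,w^{L}\,q_k(w,L)}{(1-w)^{k}(1-w^{L})},
\]
where $q_k(w,L):=1-(1-w)^k-w\,r_k(w,L)$. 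The crucial observation is $q_k(w,0)=0$ (at $L=0$ one has $A_0=1$, so $w\,r_k(w,0)=1-(1-w)^k$), hence $q_k(w,L)=L\cdot\tilde q_k(w,L)$ for a polynomial $\tilde q_k$ in $(w,L)$. Substituting $L=Mp^N$ introduces the factor $|L|_p=p^{-N}$, and the denominator estimates from the previous paragraph (together with $|1-w|_p\geq 1$ on $|w|=1\cap\mathcal{U}_M$, which comes from $|z-\zeta^j|\geq 1$) yield $|h_{j,k}-(h_{j,k})_N|_{\sup}\leq C_k p^{-N}$. An $\epsilon/3$ argument then finishes: approximate $f$ in $|\cdot|_{\sup}$ by a finite truncation of the Mittag--Leffler series, apply the explicit bound to that truncation, and control the tail by contractivity. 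The identity at infinity is immediate: $f_N(\infty)=-a_{Mp^N}$ from the leading terms of $P_N(z)$ and $-z^{Mp^N}$, and uniform convergence on $\mathcal{U}_M$ forces $f(\infty)=-\lim_{N\to\infty}a_{Mp^N}$.

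\textbf{Main obstacle.} The hard part is the explicit computation of $h_{j,k}-(h_{j,k})_N$ in terms of $q_k(w,L)$ and the recognition that $q_k(w,0)=0$, which alone supplies the $p^{-N}$ decay; a secondary difficulty is verifying the boundary estimate $|1-w^{Mp^N}|_p=1$ on $|w|=1\cap\mathcal{U}_M$ and ensuring the $k$-dependent constants $C_k$ arising from the $(k-1)!$ denominators inside $\binom{n+k-1}{k-1}$ interact safely with the decay $|c_{i,k}|_p\to 0$ during the approximation step.
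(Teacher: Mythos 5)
Your proof is correct, and while it shares the paper's overall architecture---use the structure theory of rigid functions on the affinoid $\mathcal{U}_{M}$ from Fresnel--van der Put to reduce to explicit functions with poles in the removed residue disks, verify convergence for those, and transfer to general $f$ by a sup-norm bound---the two key mechanisms are genuinely different. The paper approximates $f$ uniformly by arbitrary rational functions with poles outside $\mathcal{U}_{M}$, reduces them by partial fractions (Claim \ref{claimpartialfractions}) to the functions $z^{i}/(1-az^{M})^{k}$ with $|1-a|<1$, and proves the coefficient-congruence estimate $c_{N}=\sup_{n}|a_{n}-a_{n|_{N}}|\to 0$ (Claim \ref{claimestimate}) from $|a^{p^{N}}-1|\le p^{-(N-N_{0})}$, after which telescoping $f_{N+1}-f_{N}$ finishes the argument. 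You instead use the Mittag--Leffler decomposition with principal parts exactly at the centers $\zeta^{j}$, observe that $f\mapsto f_{N}$ is sup-norm contractive on functions vanishing at $0$ (for this step the paper's elementary bound \eqref{modulusandsupremum} on Taylor coefficients would even let you avoid invoking orthonormality of the decomposition), and compute the error $h_{j,k}-(h_{j,k})_{N}$ in closed form, extracting the factor $L=Mp^{N}$ from the identity $q_{k}(w,0)=0$; this buys an explicit rate $O_{k}(p^{-N})$ on each generator, and your $\epsilon/3$ argument is sound because each truncation involves only finitely many $k$, so the $(k-1)!$-denominators in $C_{k}$ are harmless. The one point you should write out is that $r_{k}(w,L)$, hence $q_{k}(w,L)$, really is a polynomial in $L$ with coefficients in $\frac{1}{(k-1)!}\mathbb{Z}[w]$ (for instance from $r_{k}(w,L)=(1-w)^{k}\sum_{m\ge 0}\binom{m+L+k}{k-1}w^{m}$, which is a polynomial of degree $<k$ in $w$), since this is exactly what justifies the divisibility by $L$ and the size of $C_{k}$. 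Finally, note that the paper's proof produces the auxiliary quantity $c_{N}$, which is quoted verbatim in the proof of Corollary \ref{cor of main rigid}; your argument does not produce it directly, but it is recoverable from the statement you prove, since the $z^{m}$-coefficient of $f_{N}$ on $D(0,1)^{\circ}$ is $a_{m|_{N}}$ and \eqref{modulusandsupremum} applied to $f_{N+1}-f_{N}$ returns the congruence estimate, so nothing downstream is lost.
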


\begin{proof}
Since $f$ is rigid analytic on the affinoid $\mathcal{U}_{M},$  it is a uniform limit of rational functions with poles outside $\mathcal{U}_{M}$ (\textsection 2.2, \cite{rigan}). We may also assume, without loss of generality, that these rational functions are 0 at 0.

\begin{claim}\label{claimpartialfractions}
If $r(z)$ is a rational function with poles outside $\mathcal{U}_{M},$ then $r(z)$ is a linear combination of functions of the form 
$$
\frac{z^{i}}{(1-az^M)^k},
$$
 for some $0 \leq k, $ $0\leq  i <M,$ and $|1-a|<1.$  
 \end{claim}

{\it Proof of the claim.} By the method of partial fractions, $r(z)$ is a linear combination of rational functions of the form 
\begin{eqnarray}\label{sprationalfunction}
\frac{1}{(1-bz)^{t}},
\end{eqnarray}
with $|b-\zeta^{i}|<1,$ for some $0 \leq i <M,$ and $0 \leq t.$ Therefore, we need to prove the statement only for  rational functions as in (\ref{sprationalfunction}).  Note that 
$$
\frac{1}{(1-bz)^{t}}=\frac{p(z)}{(1-az^{M})^{t}}=\sum _{0 \leq i \leq M-1}z^{i} \frac{q_{i}(1-az^M)}{(1-az^M)^t},
$$ 
for some  polynomials  $p(z)$ and $q_{i}(z),$ $0 \leq i \leq M-1$  and $a=b^{M}.$  Since $|1-a|<1,$ and the left hand side does not have a pole at $\infty,$ the right hand side is exactly as in the form stated in the claim. This proves the claim. 
\hfill $\Box$

Using the claim above, we will prove the following estimate on the coefficients of the Taylor expansion of $f:$

\begin{claim}\label{claimestimate} For $n \in \mathbb{N}:=\{1,2,3, \cdots \},$ let $n|_{N}$ denote the unique integer such that $0 <n|_{N} \leq Mp^N,$ and $Mp^N$  divides $n-n|_{N}.$ 
If we let $c_{N}:= \sup _{n \in \mathbb{N}} |a_{n} -a_{n|_{N}}|, $ then 
$$
\lim_{N \to \infty}c_{N}=0.
$$
\end{claim}
 
 {\it Proof of the claim.} First we note that, for $1 \leq k,$  $0\leq i<M$ and $|1-a|<1,$
 $$
 \frac{z^i}{(1-az^M)^k}= \sum_{0 \leq n} {n+k-1 \choose k-1}a^n z^{i+Mn}=:\sum _{ 0 \leq n} a_{n} z^{n};
 $$
 satisfy the property in the claim. 
 If $ n \not \equiv i (mod\, M) $ then $a_{n}=0$ and hence   
\begin{eqnarray*}
c_{N}&=&\sup _{n \in \mathbb{N} \atop {n \equiv i (mod \, M) }} |a_{n} -a_{n|_{N}}| \leq \sup_{s,t \geq 0} | a_{i+M(t+sp^N)}- a_{i+Mt}| \\
&=& \sup_{t \geq 0 \atop {s \geq 1} } | q(t+sp^N)a^{sp^N} - q(t) |=:d_{N},
\end{eqnarray*}
 where $q(t):={t+k-1 \choose k-1}$ is a polynomial of degree $k-1$ in  $t.$ Let $\alpha$ denote the maximum of the absolute value of the coefficients of $q(t),$ and $\beta:=|a-1|<1.$ Since $|a^{p}-1|\leq \max (\beta/p,\beta^p),$ choosing $N_{0}$ sufficiently large $|a^{p^{N_{0}}}-1|\leq p^{-1},$ and hence for $N \geq N_{0},$ $|a^{p^N} -1|\leq p^{-(N-N_{0})}.$ Then for $N \geq N_{0},$ 
 $
 d_{N}\leq \alpha (  p^{-N}+p^{-(N-N_0)}),
 $
and hence $\lim_{N \to \infty} d_{N}=\lim_{N \to \infty} c_{N}=0.$

  Since any rational function  $r(z),$ whose poles are outside $\mathcal{U}_{M},$ is a linear combination of functions as above (Claim \ref{claimpartialfractions}), the statement is true for $r(z).$ Note that for any power series $g(z):=\sum_{0 \leq n} b_{n}z^{n},$ which is convergent on $D(0,1)^{\circ}:$
 \begin{eqnarray}\label{modulusandsupremum}
 \sup_{0 \leq n} |b_{n}| \leq \sup_{|z|<1}|g(z)|.
 \end{eqnarray}
 Let $(r_{m})$ be a sequence of rational functions which are 0 at 0, have poles outside $\mathcal{U}_{M},$ and which converge, uniformly on $\mathcal{U}_{M},$ to $f.$ Letting 
 $$
 r_{m}(z):=\sum_{0<n} a_{n} ^{(m)}z^n,
 $$
 and $c_{N} ^{(m)}:=\sup_{n \in \mathbb{N}}|a_{n} ^{(m)}- a^{(m)} _{n|N}|;$ we know that $\lim_{N \to \infty} c_{N} ^{(m)}=0,$ for all $m.$ By uniform convergence and (\ref{modulusandsupremum}), $\lim_{m \to \infty} \sup_{N \in \mathbb{N}}|c_{N} ^{(m)}-c_{N}|=0.$  This implies the claim. 
 \hfill $\Box$
 
 Now, note that 
 $$
 f_{N+1}(z) -f_{N}(z)=\frac{1}{1-z^{Mp^{N+1}}}\sum_{0 <n \leq Mp^{N+1}}(a_{n}-a_{n|N})z^n.
 $$
Note that  $z \in \mathcal{U}_{M}$ if and only if $1 \leq |1-z^{M}|.$  Letting 
$0 < n \leq Mp^{N+1},$  
 $$|\frac{z^{n}}{1-z^{Mp^{N+1}}} |=|z^n|<1,$$ if $|z| <1;$ and  
 $$
 |\frac{z^{n}}{1-z^{Mp^{N+1}}} |\leq \frac{1}{|(1/z^{M})^{p^{N+1}}-1|} \leq 1,
 $$ 
  if $1\leq |z|$ and $z \in \mathcal{U}_{M}.$ Therefore, 
  $$
  \sup _{z \in \mathcal{U}_{M}} |f_{N+1}(z) -f_{N}(z)| \leq c_{N},
  $$
  and we conclude, by Claim \ref{claimestimate}, that $(f_{N})$ converges uniformly to a rigid analytic function on $\mathcal{U}_{M}.$ To see that this function, indeed,  is $f,$ we note that for $|z|<1,$ 
 $|f(z)-f_{N}(z) |\leq c_{N}.$ Then again Claim   \ref{claimestimate} implies the assertion. The last assertion follows from  $f_{N}(\infty)=-a_{Mp^N}.$ 
  \end{proof}
  
  \begin{corollary}\label{cor of main rigid}
  Let $f(z)=\sum _{0<n}a_{n}z^{n}$ be as in Proposition \ref{main rigid analytic}, and $0< l \leq pM$  then
   $$\lim_{N \to \infty}|a_{lq^{N+1}}-a_{lq^{N}}|=0.$$ If $\lim_{N \to \infty}lq^{N}a_{lq^{N}} $ exists then it is equal to 0. 
  \end{corollary}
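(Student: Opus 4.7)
The main input is Claim \ref{claimestimate}, which gives $c_N := \sup_{n \in \mathbb{N}} |a_n - a_{n|_N}| \to 0$, where $n|_N \in (0, Mp^N]$ is the unique representative of $n$ modulo $Mp^N$. The strategy for the first assertion is to arrange that $lq^{N+1}$ and $lq^N$ have the same reduction at level $fN$, so that Claim \ref{claimestimate} bounds both $a_{lq^{N+1}}$ and $a_{lq^N}$ by the same coefficient.

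Writing $q = p^f$ (the cardinality of the residue field of $\mathbb{Q}_p(\zeta)/\mathbb{Q}_p$, i.e.\ $f = \mathrm{ord}_M(p)$), one has $q \equiv 1 \pmod M$. Consequently $lq^{N+1} - lq^N = lq^N(q-1)$ is divisible by $p^{fN} = q^N$ (from $q^N$) and by $M$ (from $q-1$), hence by $Mp^{fN}$. Therefore $(lq^{N+1})|_{fN} = (lq^N)|_{fN} = r(l)\,q^N$, where $r(l) \in (0, M]$ represents $l$ modulo $M$, and Claim \ref{claimestimate} at level $fN$ gives
\[
|a_{lq^{N+1}} - a_{lq^N}| \;\leq\; |a_{lq^{N+1}} - a_{r(l)q^N}| + |a_{r(l)q^N} - a_{lq^N}| \;\leq\; 2\,c_{fN} \;\longrightarrow\; 0.
\]
For the second assertion, the identity $lq^{N+1} = q \cdot lq^N$ yields
\[
lq^{N+1}a_{lq^{N+1}} \;-\; q\cdot lq^N a_{lq^N} \;=\; lq^{N+1}\bigl(a_{lq^{N+1}} - a_{lq^N}\bigr),
\]
which tends to zero $p$-adically by the first assertion combined with the (in fact stronger than) boundedness of $|lq^{N+1}|_p$. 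If $L := \lim_N lq^N a_{lq^N}$ exists, passing to the limit gives $L = qL$, forcing $L = 0$ since $q \neq 1$.

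The only subtle point is the selection of the reduction level $fN$ rather than $N$: it is at exactly this level that $q^N$ absorbs the $p$-power $p^{fN}$ while $q - 1$ supplies the needed factor of $M$, so that the two residues coincide. Once the congruence $q \equiv 1 \pmod M$ is recognized, the rest is routine bookkeeping built on Claim \ref{claimestimate}. The hypothesis $0 < l \leq pM$ plays no essential role in this particular argument and seems to be chosen for compatibility with the inductive computations of the later sections.
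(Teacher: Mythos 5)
Your proof is correct and takes essentially the same route as the paper: the first assertion comes from the congruence $lq^{N+1}\equiv lq^{N} \pmod{Mp^{N}}$ (you work at the finer level $fN$, the paper just uses level $N$, but the idea is identical) together with Claim \ref{claimestimate}, and the second assertion is the same telescoping identity forcing $L=qL$ with $q\neq 1$.
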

  
  \begin{proof}
  Since $M|(q-1),$ $Mp^N | (lq^{N+1}-lq^N)$ and hence $|a_{lq^{(N+1)}}-a_{lq^{N}}|\leq 2c_{N}.$ Since $\lim_{N \to \infty}c_{N}=0,$ the first statement follows. Assume  that $\lim_{N \to \infty}lq^{N}a_{lq^{N}}=\alpha.$ Then 
  $$
  q\alpha=\lim _{N \to \infty} (lq^{N+1}a_{lq^{N}}+ lq^{N+1}(a_{lq^{N+1}} - a_{lq^{N}}))=\lim_{N \to \infty} lq^{N+1}a_{lq^{N+1}}=\alpha.
  $$
  Hence $\alpha=0.$
  \end{proof}

\section{Computation of the polylogarithmic part }

In this section we  determine the frobenius action on  the polylogarithmic quotient of the fundamental group of $X_{M} '.$

\subsection{Computation of $g_{j}[e_{0} ^{s}e_{i}]$}\label{Comp of 1}

Let $e_{\infty}\in {\rm Lie} \, \pi_{1,dR}(X_{M} ')$ denote the element which is obtained by ${\rm res}_{\infty},$ the residue at $\infty,$ as in \textsection\ref{connection on the fundamental torsor}.   

Applying $F_{*}$ to the identity
$$
 \sum_{0 \leq i \leq M}e_{i} +e_{\infty}=0,
$$
we get
$$
\sum_{0 \leq i \leq M} g_{i} ^{-1} e_{i} g_{i}=g_{\mathcal{F}}(\infty) ^{-1} (\sum_{0 \leq i \leq M}e_{i} )g_{\mathcal{F}}(\infty).
$$

Rewriting this, we obtain the fundamental identity 
\begin{eqnarray}\label{mainequality}
g_{\mathcal{F}}(\infty) (\sum_{0 \leq i \leq M} g_{i} ^{-1} e_{i} g_{i})=(\sum_{0 \leq i \leq M}e_{i} )g_{\mathcal{F}}(\infty).
\end{eqnarray}

From the equation (\ref{rewritingtheequation}), we obtain 
$$
dg_{\mathcal{F}}[e_{0}]=\mathcal{F}^* \omega_{0}-p\omega_{0}=0,
$$ 
and hence that $g_{\mathcal{F}} [e_{0}]=0,$ since $g_{\mathcal{F}}(0)=1.$  

Similarly, for $1 \leq i\leq M,$
$$
dg_{\mathcal{F}}[e_{i}]=\mathcal{F}^{*}\omega_{i}-p \omega_{\underline{i}},
$$
which gives that 
$$
g_{\mathcal{F}}(z)[e_{i}]=p\sum _{1 \leq n \atop {p \not \, | n}}\frac{(\zeta^{-\underline{i}}z)^{n} }{n},
$$
for $z \in D(0,1)^{\circ}.$ Since $g_{\mathcal{F}}[e_{i}]$ is a rigid analytic function Proposition \ref{main rigid analytic} implies that 
$$
g_{\mathcal{F}}(z)[e_{i}]=\lim_{N \to \infty}\frac{p}{1-z^{Mp^N}}\sum_{0<n \leq Mp^N \atop {p \not \, | n}}\frac{(\zeta^{-\underline{i}}z)^{n} }{n},
$$  
for $z \in \mathcal{U}_{M},$ and 
\begin{align}\label{g(infty)ei}
g_{\mathcal{F}}(\infty)[e_{i}]=0.
\end{align}
Comparing the coefficients of $e_{i}e_{0}$ in both sides of (\ref{mainequality}) gives 
$$
g_{\mathcal{F}}(\infty)[e_{i}]+g_{i}[e_{0}]=g_{\mathcal{F}}(\infty)[e_{0}].
$$
Using (\ref{g(infty)ei}), this gives 
$
g_{i}[e_{0}]=0.
$ That $g_{i}$ is group-like implies that
\begin{align}\label{gien}
g_{i}[e_{0} ^{n}]=0,
\end{align} 
 for all $0<n,$ and $0 \leq i \leq M.$
 
 Using this and  the equation (\ref{rewritingtheequation}) we see, by induction, that 
 
 \begin{eqnarray}\label{formula for gf}
g_{\mathcal{F}}(z)[e_{0} ^{s-1}e_{i}]=S(s;i)(z),
\end{eqnarray}
for $z \in D(0,1)^{\circ}.$ 
Note that if $\alpha $ is a group-like element with $\alpha[e_{0}]=0$ then 
\begin{align}\label{relofalpha}
\alpha[e_{0} ^{a} e_{i} e_{0} ^b]=(-1) ^{b} {a+b \choose a} \alpha[e_{0} ^{a+b} e_{i}].
\end{align}
This gives 
$$
g_{\mathcal{F}}(z)[e_{0} ^{a}e_{i} e_{0} ^{b}  ]=(-1) ^{b}{a+b \choose a} S(a+b+1;i).
$$
Using Proposition \ref{main rigid analytic} as above, we get 
\begin{eqnarray}\label{As a}
g_{\mathcal{F}}(\infty)[e_{0} ^{a} e_{i} e_{0} ^{b}]=0.
\end{eqnarray}

Using (\ref{rewritingtheequation}) and (\ref{gien}) we obtain that $
dg_{\mathcal{F}}[e_{i} e_{0} ^{s-1}e_{j}]=
$
$$
 \mathcal{F}^{*}\omega_{i} g_{\mathcal{F}}[e_{0} ^{s-1}e_{j}]-p\big(  g_{i}[e_{0} ^{s-1}e_{j}] \omega_{\underline{i}} + g_{\mathcal{F}}[e_{i} e_{0} ^{s-1}] \omega_{\underline{j}}+ g_{j} ^{-1}[e_{i}e_{0} ^{s-1}] \omega_{\underline{j}}       \big).
$$
From (\ref{As a}) and the fact that the above differential is regular at $\infty,$ we get
$$
g_{j} ^{-1} [e_{i} e_{0} ^{s-1}]=-g_{i}[e_{0} ^{s-1}e_{j}].
$$

Using this and solving the differential equation we obtain that 
$$
g_{\mathcal{F}}(z)[e_{i}e_{0} ^{s-1}e_{j}]=
$$
\begin{eqnarray*}
-S(s,1;j,i;d_{2})+(-1)^{s-1}S(s,1;i,j)+g_{i}[e_{0} ^{s-1}e_{j}] (T(1;i) -T(1;j)). 
\end{eqnarray*}

This gives that 
$$
g_{\mathcal{F}}(\infty)[e_{i}e_{0} ^{s-1}e_{j}]=(-1)^{s}p^{s+1}\lim_{N \to \infty} \frac{1}{Mp^N} \sum_{0<n< Mp^N \atop {p \not \, | n}}\frac{\zeta^{(\underline{j}-\underline{i})n}}{n^s}.
$$

Using this we find a formula for $g_{j}[e_{0} ^{s}e_{i}],$ with $s \geq 1,$ as follows. First upon comparing the coefficients of $e_{0}e_{i}e_{0} ^{s-1}e_{j}$ in (\ref{mainequality}) and using (\ref{gien}) and   (\ref{As a})    we find that 
$$
g_{\mathcal{F}}(\infty)[e_{i}e_{0} ^{s-1}e_{j}]=g_{j} ^{-1}[e_{0}e_{i}e_{0} ^{s-1}].
$$
Again by (\ref{gien}), $g_{j} ^{-1} [e_{0}e_{i}e_{0}^{s-1}]=-g_{j} [e_{0}e_{i}e_{0}^{s-1}]$ and by  (\ref{relofalpha}),  $g_{j} [e_{0}e_{i}e_{0}^{s-1}]=(-1)^{s-1}s g_{j}[e_{0} ^{s}e_{i}].$ 
Combining these we get  the following expression. 
\begin{proposition}\label{usual expression prop} 
For $s \geq 1,$ $$g_{j}[e_{0}^{s}e_{i}]= \frac{p^{s+1}}{s}\lim_{N \to \infty} \frac{1}{Mp^N} \sum_{0<n< Mp^N \atop {p \not \, | n}}\frac{\zeta^{(\underline{j}-\underline{i})n}}{n^s}
.$$ 
\end{proposition}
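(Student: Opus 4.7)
The plan is to assemble identities already derived in this subsection; the statement follows by pure bookkeeping. First, I would compare the coefficient of $e_{0}e_{i}e_{0}^{s-1}e_{j}$ on both sides of the fundamental identity (\ref{mainequality}). Using (\ref{gien}) to kill the pure $e_{0}$-monomials and (\ref{As a}) to eliminate the interior $e_{\ell}e_{0}^{a}e_{m}e_{0}^{b}$ coefficients at infinity, the comparison collapses to
$$
g_{\mathcal{F}}(\infty)[e_{i}e_{0}^{s-1}e_{j}]=g_{j}^{-1}[e_{0}e_{i}e_{0}^{s-1}],
$$
exactly the relation already recorded above.

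Next I would convert the right-hand side into $g_{j}[e_{0}^{s}e_{i}]$ via two sign manipulations. Since $g_{j}[e_{0}^{n}]=0$ for all $n>0$, the antipode formula reduces to $g_{j}^{-1}[e_{0}e_{i}e_{0}^{s-1}]=-g_{j}[e_{0}e_{i}e_{0}^{s-1}]$; the group-like relation (\ref{relofalpha}), applied with $a=1$ and $b=s-1$, then gives $g_{j}[e_{0}e_{i}e_{0}^{s-1}]=(-1)^{s-1}s\,g_{j}[e_{0}^{s}e_{i}]$. Combining these yields
$$
g_{j}[e_{0}^{s}e_{i}]=\frac{(-1)^{s}}{s}\,g_{\mathcal{F}}(\infty)[e_{i}e_{0}^{s-1}e_{j}].
$$

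Finally, I would substitute the explicit evaluation
$$
g_{\mathcal{F}}(\infty)[e_{i}e_{0}^{s-1}e_{j}]=(-1)^{s}p^{s+1}\lim_{N \to \infty} \frac{1}{Mp^N} \sum_{0<n<Mp^N,\;p\,\nmid\, n}\frac{\zeta^{(\underline{j}-\underline{i})n}}{n^{s}},
$$
derived earlier from solving (\ref{rewritingtheequation}) and applying Proposition \ref{main rigid analytic}. The two factors of $(-1)^{s}$ cancel and the asserted formula drops out.

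The real work in this subsection sits upstream: producing the closed form for $g_{\mathcal{F}}(z)[e_{i}e_{0}^{s-1}e_{j}]$ on $\mathcal{U}_{M}$ and extracting its value at $\infty$ through Proposition \ref{main rigid analytic}. For the proposition itself, the only subtlety is keeping the signs straight across the antipode and the shuffle identity (\ref{relofalpha}); the rest is mechanical.
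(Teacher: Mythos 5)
Your argument is correct and is essentially identical to the paper's own proof: comparing the coefficient of $e_{0}e_{i}e_{0}^{s-1}e_{j}$ in (\ref{mainequality}) with (\ref{gien}) and (\ref{As a}) to get $g_{\mathcal{F}}(\infty)[e_{i}e_{0}^{s-1}e_{j}]=g_{j}^{-1}[e_{0}e_{i}e_{0}^{s-1}]$, then using the inverse relation and (\ref{relofalpha}) with $a=1$, $b=s-1$, and finally the explicit limit formula for $g_{\mathcal{F}}(\infty)[e_{i}e_{0}^{s-1}e_{j}]$. The signs all check out, so there is nothing to add.
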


\subsection{An alternative expression for $g_{j}[e_{0} ^{s-1}e_{i}]$ when $i \neq j$}First note that by the expression for $g_{\mathcal{F}}[e_{i}e_{0}^{s-1}e_{j}]$ in  \textsection \ref{Comp of 1}
$$
q^{N}g_{\mathcal{F}}[e_{i}e_{0}^{s-1}e_{j}][q^{N}]=p((-1)^{s-1}\zeta^{-\underline{j}}F(s;j-i)(q^N)+g_{i}[e_{0}^{s-1}e_{j}](\zeta^{-\underline{i}}-\zeta^{-\underline{j}}))
$$

\begin{claim}
$\lim_{N \to \infty} q^{N}g_{\mathcal{F}}[e_{i}e_{0}^{s-1}e_{j}][q^{N}]=0$
\end{claim}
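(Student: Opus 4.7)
My plan is to read the claim as a direct application of Corollary \ref{cor of main rigid}. Set $f(z):=g_{\mathcal{F}}(z)[e_ie_0^{s-1}e_j]$. Because $g_{\mathcal{F}}$ is, by its very construction as the frobenius image of $\gamma_{t_{0}}$, a rigid analytic section of $\overline{\mathcal{T}}_{dR}$ over $\mathcal{U}_M$ with $g_{\mathcal{F}}(0)=1$, the function $f$ is rigid analytic on $\mathcal{U}_M$ and satisfies $f(0)=0$. Applying Corollary \ref{cor of main rigid} with $l=1$ reduces the claim to the statement that $\lim_{N\to\infty} q^N f[q^N]$ exists; once existence is in hand, the corollary automatically forces the limit to vanish.

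To secure existence, I would use the displayed expression for $q^N g_{\mathcal{F}}[e_ie_0^{s-1}e_j][q^N]$, which comes from extracting the $z^{q^N}$-coefficient of the formula for $g_{\mathcal{F}}[e_ie_0^{s-1}e_j]$ obtained in \textsection\ref{Comp of 1}. Three contributions need to be identified: the $S(s,1;j,i;d_{2})$ term vanishes at $z=q^N$ because its summation constraints $p\nmid n_{1}$ and $p\mid q^N-n_{1}$ are incompatible ($p\mid q^N$ would force $p\mid n_{1}$); the $(-1)^{s-1}S(s,1;i,j)$ term produces the factor $(-1)^{s-1}p\zeta^{-\underline{j}}F(s;j-i)(q^N)$ after one uses $\zeta^{q^N}=\zeta$, valid because $M\mid q-1$; and the $T(1;i)-T(1;j)$ term yields $pg_i[e_0^{s-1}e_j](\zeta^{-\underline{i}}-\zeta^{-\underline{j}})$ directly from $q^N T(1;a)[q^N]=p\zeta^{-\underline{a}}$. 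So existence of $\lim_{N} q^N f[q^N]$ is equivalent to existence of $\lim_{N} F(s;j-i)(q^N)$.

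For the last point the cleanest route is to note that, $f$ being rigid analytic on the affinoid $\mathcal{U}_M$, its Taylor coefficients at $0$ are bounded in $p$-adic absolute value, as is visible from the partial-fraction decomposition of rational approximants used in the proof of Proposition \ref{main rigid analytic}. Since $|q^N|_{p}\to 0$, the product $q^N f[q^N]$ converges to $0$, which simultaneously gives existence and the desired value. An alternative is to invoke directly the first assertion of Corollary \ref{cor of main rigid}, which makes $(f[q^N])_N$ a Cauchy, hence bounded, sequence, and then use $|q^N|_p \to 0$ to conclude. The only obstacle I foresee is the bookkeeping in the coefficient extraction at $z=q^N$; once the three pieces combine into the displayed formula, the claim is immediate from Corollary \ref{cor of main rigid}.
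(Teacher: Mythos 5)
Your proof is correct, but it handles the crucial step differently from the paper. The paper makes the same reduction to Corollary \ref{cor of main rigid}, and then supplies the existence hypothesis by a computation specific to the iterated sum: it shows that $F(s;j-i)(q^{N+1})-F(s;j-i)(q^{N})$ is congruent modulo $q^{N}$ to $\bigl(\sum_{1\leq t\leq q-1}\zeta^{(\underline{j}-\underline{i})t}\bigr)\cdot\sum_{0<n<q^{N},\,p\nmid n}\zeta^{(\underline{j}-\underline{i})n}/n^{s}=0$ (using $\zeta^{q^{N}}=\zeta$ from $M\mid q-1$ and $i\neq j$), so that $(F(s;j-i)(q^{N}))_{N}$ is Cauchy, the limit in the claim exists, and the second assertion of the corollary forces it to vanish. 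You instead observe that the sequence $(f[q^{N}])_{N}$ is bounded -- either because a rigid analytic function on the affinoid $\mathcal{U}_{M}$ has bounded Taylor coefficients at $0$, or, more economically, because the first assertion of Corollary \ref{cor of main rigid} makes $(f[q^{N}])_{N}$ Cauchy in the ultrametric field, hence convergent and bounded -- and then $|q^{N}|_{p}\to 0$ kills the product, giving existence and the value $0$ at once. Both of your routes are valid; the second is airtight using only tools already in the paper, while the first needs a word more than ``visible from the partial fractions'': the bound on the coefficients of $f$ is obtained from those of the rational approximants via uniform convergence on $D(0,1)^{\circ}$ together with the inequality (\ref{modulusandsupremum}) (equivalently, the maximum principle on the affinoid). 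Your argument is in fact more general: it shows that for any $f$ as in Proposition \ref{main rigid analytic} and any $0<l\leq pM$ the limit $\lim_{N}lq^{N}a_{lq^{N}}$ exists and equals $0$ unconditionally, so the existence hypothesis in the corollary is automatic in this situation. What the paper's computation buys in exchange is the convergence of $F(s;j-i)(q^{N})$, i.e.\ the existence of $\mathcal{X}(s;j-i)$, which is exactly what Proposition \ref{alternative prop} needs; but nothing is lost on your route either, since that convergence drops out of the claim combined with the displayed coefficient identity, whose derivation you reproduce correctly.
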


\begin{proof}
By Corollary \ref{cor of main rigid} it is enough to show that the above limit exists. This follows from the observation that 
$$
\sum_{0 < n < q^{N+1} \atop { p \not \,  | n }} \frac{\zeta ^{n}}{n^{s}} - \sum_{0 < n < q^{N} \atop { p \not \,  | n }} \frac{\zeta ^{n}}{n^{s}}=\sum _{1 \leq t \leq q-1} \sum_{0 < n < q^{N} \atop { p \not \,  | n }} \frac{\zeta ^{n+tq^N}}{(n+tq^N)^{s}}
$$
is congruent modulo $q^N$ to 
$$
\sum _{1 \leq t \leq q-1}\zeta^t \cdot \sum_{0 < n < q^{N} \atop { p \not \,  | n }}  \frac{\zeta ^{n}}{n^{s}}=0.
$$
\end{proof}
Then the above claim gives the following:
\begin{proposition}\label{alternative prop}
For  $i \neq j,$ we have 
$$
g_{i}[e_{0}^{s-1}e_{j}]=\frac{(-1)^{s-1}}{1-\zeta^{\underline{j}-\underline{i}}}\mathcal{X}(s;j-i).
$$
\end{proposition}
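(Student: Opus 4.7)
The strategy is immediate from what is already on the table. The displayed identity
\[
q^{N}g_{\mathcal{F}}[e_{i}e_{0}^{s-1}e_{j}][q^{N}]=p\bigl((-1)^{s-1}\zeta^{-\underline{j}}F(s;j-i)(q^N)+g_{i}[e_{0}^{s-1}e_{j}](\zeta^{-\underline{i}}-\zeta^{-\underline{j}})\bigr)
\]
is an exact equality (not a limit), so the only thing to do is to pass to the limit on both sides and solve. The Claim just proved tells us the left-hand side tends to $0$, and by the definition of $\mathcal{X}$ at the end of \textsection 2, the quantity $F(s;j-i)(q^N)$ has a limit, namely $\mathcal{X}(s;j-i)$. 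Since the remaining term is independent of $N$, the limit of the right-hand side exists and equals
\[
p\bigl((-1)^{s-1}\zeta^{-\underline{j}}\mathcal{X}(s;j-i)+g_{i}[e_{0}^{s-1}e_{j}](\zeta^{-\underline{i}}-\zeta^{-\underline{j}})\bigr)=0.
\]

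At this point the hypothesis $i\neq j$ enters: because $1\leq \underline{i},\underline{j}\leq M$ with $\underline{i}\neq\underline{j}$ (the map $k\mapsto\underline{k}$ is a bijection on $\{1,\dots,M\}\cup\{0\}$), the factor $\zeta^{-\underline{i}}-\zeta^{-\underline{j}}$ is nonzero, so we may divide. Solving,
\[
g_{i}[e_{0}^{s-1}e_{j}]=\frac{(-1)^{s}\zeta^{-\underline{j}}}{\zeta^{-\underline{i}}-\zeta^{-\underline{j}}}\,\mathcal{X}(s;j-i),
\]
and multiplying top and bottom by $\zeta^{\underline{j}}$ gives exactly $\frac{(-1)^{s-1}}{1-\zeta^{\underline{j}-\underline{i}}}\,\mathcal{X}(s;j-i)$, as desired.

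There is essentially no obstacle beyond the Claim itself, which is already established. The only bookkeeping point worth double-checking is the sign: one should carefully verify the formula for $g_{\mathcal{F}}[e_{i}e_{0}^{s-1}e_{j}][q^N]$ recalled at the start of \textsection 4.2 by reading off coefficients in the expression
\[
g_{\mathcal{F}}(z)[e_{i}e_{0}^{s-1}e_{j}]=-S(s,1;j,i;d_{2})+(-1)^{s-1}S(s,1;i,j)+g_{i}[e_{0}^{s-1}e_{j}]\bigl(T(1;i)-T(1;j)\bigr)
\]
from \textsection\ref{Comp of 1}, so that the coefficients of $\zeta^{-\underline{j}}$ and $(\zeta^{-\underline{i}}-\zeta^{-\underline{j}})$ come out with the indicated signs. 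Once this is confirmed, the proposition follows in one line from the Claim together with the existence of $\mathcal{X}(s;j-i)$.
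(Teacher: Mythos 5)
Your proposal is correct and is essentially the paper's own argument: the paper likewise passes to the limit in the displayed identity, uses the Claim (whose proof supplies both the existence of $\lim_{N\to\infty}F(s;j-i)(q^N)=\mathcal{X}(s;j-i)$ and, via Corollary \ref{cor of main rigid}, the vanishing of the left-hand side), and solves for $g_{i}[e_{0}^{s-1}e_{j}]$, with the same algebraic simplification $\frac{(-1)^{s}\zeta^{-\underline{j}}}{\zeta^{-\underline{i}}-\zeta^{-\underline{j}}}=\frac{(-1)^{s-1}}{1-\zeta^{\underline{j}-\underline{i}}}$. Your explicit remark that $i\neq j$ forces $\underline{i}\neq\underline{j}$, so the divisor is nonzero, is a point the paper leaves implicit.
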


\subsection{Computation of $g_{j}[e_{i}]$}\label{Computation 5} By (\ref{alpha eq}), $g_{j}[e_{i}]=(\alpha_{j* }(g))[e_{i}]=g[e_{i-j}].$  Let  $\iota : X_{M} ' \to X_{1} '$ denote the inclusion. If $i=j$ then using the functoriality of frobenius wiht respect to $\iota$ we see that $g[e_{i-j}]$ computed on $X_{M} '$ is equal to $g[e_{1}]$ computed on $X_{1} '.$ But this last expression is 0 by  \cite[\textsection 5.6]{un}.  Suppose now that $i \neq j.$ Then $g_{j}[e_{i}]=\alpha_{i*}(g_{j-i})[e_{i}]=g_{j-i}[e_{M}].$  Then as above,  by the functoriality of frobenius for $\iota,$ $g_{j-i}[e_{M}],$ which is  computed on $X_{M} ',$ is equal to 
\begin{align}\label{expr}
(\, _{t_{0}}\gamma_{z'} \cdot F_{*} (\, _{z}\gamma_{t_{0}})) [e_{1}],
\end{align}
which is computed  on $X_{1} '.$ Here $z=\zeta^{\underline{j}-\underline{i}}$ and $z'=\zeta^{j-i}.$ Note that $\mathcal{F}$ is good lifting of frobenius on $\mathcal{U}_{1} \subseteq X_{1} '.$ Since $i \neq j,$ $z \in \mathcal{U}_{1},$ and since $\mathcal{F}(z)=z^{p}=z',$ we see that (\ref{expr}) is equal to $g_{\mathcal{F}}[\zeta^{\underline{j}-\underline{i}}][e_{1}].$ The last expression is computed by Proposition \ref{main rigid analytic} to be 
$$
\lim_{N \to \infty} \frac{p}{1-\zeta^{(\underline{j}-\underline{i})p^N }}\sum_{0< n< p^N \atop p \not \,  | n}\frac{\zeta^{(\underline{j}-\underline{i})n}}{n}.
$$
Therefore we have the following expression for $g_{j}[e_{i}].$
\begin{proposition}\label{prop 2}
If $i=j$ then $g_{j}[e_{i}]=0.$ Otherwise 
$$
g_{j}[e_{i}]=\log \frac{1-\zeta^{j-i}}{(1-\zeta^{\underline{j}-\underline{i}})^p}.
$$
\end{proposition}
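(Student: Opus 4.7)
The plan is essentially to execute the two-step reduction sketched in the paragraphs immediately preceding the proposition: first use the action of the automorphisms $\alpha_i$ on the fundamental group to push everything into the coefficient algebra of a single path, then use the functoriality of the crystalline frobenius with respect to the inclusion $\iota\colon X_M' \hookrightarrow X_1'$ to reduce the computation to an already-understood rigid analytic function on $\mathcal{U}_1$.

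First I would apply (\ref{alpha eq}). Writing $g_j[e_i] = (\alpha_{-j *}(g_j))[\alpha_{-j*}(e_i)] = g[e_{i-j}]$ (with indices read modulo $M$), the case $i=j$ becomes $g[e_M]$ on $X_M'$. Pushing forward by $\iota_*$ — which annihilates $e_k$ for $1 \le k < M$ and sends $e_M$ to the generator $e_1$ of $\mathrm{Lie}\,\pi_{1,dR}(X_1')$ — identifies this coefficient with $g[e_1]$ computed on $X_1'$, and this quantity is shown to vanish in \cite[\textsection 5.6]{un}. This disposes of the diagonal case.

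For the off-diagonal case $i\neq j$, I would apply $\alpha_{i*}$ again to obtain $g_j[e_i] = g_{j-i}[e_M]$ on $X_M'$, and then push forward by $\iota_*$. Now $t_{j-i}$ and $t_{\underline{j-i}}$ are tangent vectors based at points $\zeta^{j-i}$ and $\zeta^{\underline{j}-\underline{i}}$ which are honest points of $X_1'$ (since $i\neq j$ forces them to differ from $1$). So the image becomes the coefficient of $e_1$ in the $X_1'$-path $\bigl(\,_{t_0}\gamma_{z'}\cdot F_*(\,_z\gamma_{t_0})\bigr)$ with $z=\zeta^{\underline{j}-\underline{i}}$ and $z'=\zeta^{j-i}$. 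The crucial point is that $\mathcal{F}(w)=w^p$ is a good frobenius lifting on $\mathcal{U}_1$, that $z\in\mathcal{U}_1$, and that $\mathcal{F}(z)=z^p=z'$ because $p\underline{j}\equiv j$ and $p\underline{i}\equiv i\pmod M$. Consequently this path expression equals the value $g_{\mathcal{F}}(z)[e_1]$ of the universal frobenius section evaluated at $z$.

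Finally, I would evaluate $g_{\mathcal{F}}(z)[e_1]$ at $z=\zeta^{\underline{j}-\underline{i}}$. By (\ref{formula for gf}) applied with $M=1,\,s=1$, on the open unit disk $g_{\mathcal{F}}(z)[e_1] = p\sum_{n\ge 1,\,p\nmid n} z^n/n$, which coincides with the $p$-adic Iwasawa logarithm $\log\bigl((1-z^p)/(1-z)^p\bigr)$; both functions extend rigid analytically to $\mathcal{U}_1$ and must agree there by uniqueness. Evaluating at $z=\zeta^{\underline{j}-\underline{i}}$ and using $\zeta^{p(\underline{j}-\underline{i})}=\zeta^{j-i}$ yields the claimed formula. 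The main obstacle, such as it is, lies not in any of the category-theoretic reductions but in justifying that the rigid analytic limit produced by Proposition \ref{main rigid analytic} really coincides with the Iwasawa logarithm expression on the affinoid $\mathcal{U}_1$; this is standard given the explicit Taylor expansion, but it is where all the analytic input of the paper is used.
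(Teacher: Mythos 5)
Your proposal is correct and takes essentially the same route as the paper: reduce via (\ref{alpha eq}) and functoriality of frobenius with respect to $\iota$ to the coefficient $g_{\mathcal{F}}(\zeta^{\underline{j}-\underline{i}})[e_1]$ on $X_1'$ (with the $i=j$ case killed by \cite[\textsection 5.6]{un}), then evaluate that rigid analytic coefficient at the point $\zeta^{\underline{j}-\underline{i}} \in \mathcal{U}_1$. The only, harmless, difference is the final evaluation step, where the paper invokes the limit of rational functions from Proposition \ref{main rigid analytic} while you identify the Taylor series $p\sum_{p\nmid n} z^n/n$ with $\log\bigl((1-z^p)/(1-z)^p\bigr)$ on the open unit disk and extend to $\mathcal{U}_1$ by uniqueness of rigid analytic continuation; both amount to the same standard computation.
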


\section{ $M$-power series functions} 

In order to compute the higher depth part of the frobenius action we first study the type of functions that appear in these computations. 

\begin{definition}
Let $n \in \mathbb{N}$ and  let $f: \mathbb{N}_{\geq n} \to \mathbb{Q}_{p}[\zeta]$ be any function. We say that $f$ is an {\it $M$-power series function}, if there exist power series $p_{i} (x) \in \mathbb{Q}_{p}[\zeta][[x]],$ which converge on $D(0,r_{i})$ for some $r_{i} > |p|$, for $0 < i  \leq pM,$ such that 
$
f(a)=p_{i}(a-i),
$ 
for all $a \geq n $ and $pM|(a-i).$ We let  the {\it absolute value of $f$} to be the maximum of the absolute values of the $p_{i}.$
 \end{definition}
 
\begin{remark}\label{rem main power}
(i) By the Weierstrass preparation theorem, the power series $p_{i}$ in the above definition are unique. 

(ii) Fix $0< l \leq pM,$ and let  $f$ be as above. Then there is a power series $p(x) \in \mathbb{Q}_{p}[\zeta][[x]] $ which converges on some $D(0,r)$ with $r>|p|$ and 

$$
f(lq^N)=p(lq^N),
$$  
for $N$ sufficiently large. 

\end{remark}

\begin{example}\label{exam main power}{\rm

 (i) Let $s \in \mathbb{Z}$ and $f(k):=\zeta^{ik} k^{s},$ for $p \not  | k$ and $f(k)=0$ for $p|k. $      Then $f$ is an  $M$-power series function.

(ii) Clearly the sums and products of  $M$-power series functions are  $M$-power series functions. 

(iii) Let $f$ be an $M$-power series function. For any $0 <l \leq pM,$ with $p|l$ let 
$$
f_{l}:= \lim _{n \to 0 \atop {pM | (n-l)}}f(n).
$$
Let $f^{[1]}$ be defined by 
$$
f^{[1]}(k)=\frac{f(k)-f_{l}}{k},
$$
if $p|k$ and  $pM|(k-l);$ and $f^{[1]}(k)=0,$ if $p \not | k.$ 
We then see that  $f^{[1]}$ is an $M$-power series function. In fact, if $p|l,$ and $p$ is a power series around 0 such that $f(n)=p(n)$ for all $pM | (n-l)$ then $f^{[1]}(n)=q(n),$ for all $pM | (n-l),$ where   
$$
q(x)= \frac{p(x)-p(0)}{x}.
$$ 
Inductively, we let $f^{[k+1]}:=(f^{[k]})^{[1]}.$

(iv) Using the notation as above, let $f^{1}$ be defined by  $f^{(1)}(k):=f^{[1]}(k),$ if $p|k;$ and $f^{ (1)}(k)=\frac{f(k)}{k},$ if $p \not  |k.$ Then $f^{ (1)} $ is also an $M$-power series function. 

}
\end{example}

\begin{proposition}\label{prop main power}
Let $f: \mathbb{N}_{\geq n_{0}} \to \mathbb{Q}_{p}[\zeta]$ be an $M$-power series function. If we define $F:  \mathbb{N}_{\geq n_{0}} \to \mathbb{Q}_{p}[\zeta]$ by 
$$
F(n):= \sum _{n_{0}\leq  k\leq n}f(k)
$$
then $F$ is also an $M$-power series function. 
 \end{proposition}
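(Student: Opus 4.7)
The plan is to fix each residue class $i$ modulo $pM$ and show that, for $n=i+pMN$ with $N$ sufficiently large, $F(n)$ coincides with a single convergent power series in $n-i$ on some disk of radius strictly greater than $|p|$. Splitting the sum by residue class,
\begin{equation*}
F(n)=\sum_{j=1}^{pM}\sum_{\substack{n_{0}\leq k\leq n\\ pM\mid(k-j)}}f(k)=\sum_{j=1}^{pM}\sum_{m=m_{j}}^{N_{j}}p_{j}(pMm),
\end{equation*}
where $m_{j}$ is a constant depending only on $n_{0}$ and $j$, and $N_{j}=N$ if $j\leq i$, $N_{j}=N-1$ otherwise. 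Up to additive constants coming from the starting index $m_{j}$, and possibly the single term $-p_{j}(pMN)$ (already a power series in $pMN$), everything reduces to analyzing $S_{j}(N):=\sum_{m=0}^{N}p_{j}(pMm)$ as a function of $u:=pMN$.

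The key lemma I would establish is: if $p(y)=\sum_{t\geq 0}a_{t}y^{t}$ converges on $D(0,r)$ with $r>|p|$, then $\sum_{m=0}^{N}p(pMm)=Q(pMN)+C$ for some constant $C$ and some power series $Q(u)=\sum_{s\geq 0}b_{s}u^{s}$ converging on $D(0,r_{0})$ for some $r_{0}>|p|$. Swapping summations (valid by $p$-adic convergence) and applying Faulhaber's identity $\sum_{m=1}^{N}m^{t}=\frac{1}{t+1}\sum_{k=0}^{t}\binom{t+1}{k}B_{k}N^{t+1-k}$, then substituting $N=u/(pM)$, yields the explicit formula
\begin{equation*}
b_{s}=\sum_{t\geq s-1}\frac{a_{t}}{t+1}\binom{t+1}{s}B_{t+1-s}(pM)^{t-s}.
\end{equation*}

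The main obstacle is proving that $Q$ converges on a disk of radius $>|p|$. Using $|\binom{t+1}{s}|_{p}\leq 1$, the Clausen-von Staudt bound $|B_{n}|_{p}\leq p$, and $|1/(t+1)|_{p}\leq t+1$, together with $M_{t}:=|a_{t}|_{p}r^{t}\to 0$, one obtains for any $|p|<r_{0}<r$
\begin{equation*}
|b_{s}|_{p}r_{0}^{s}\leq p\,(r_{0}/r)^{s-1}(r_{0}/|p|)\sup_{t\geq s-1}M_{t}(t+1)(|p|/r)^{t-s+1};
\end{equation*}
since $|p|/r<1$ the supremum is bounded by an absolute constant while $(r_{0}/r)^{s-1}\to 0$, forcing $|b_{s}|_{p}r_{0}^{s}\to 0$. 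Reassembling the contributions over $j$ produces a single power series $P_{i}$ converging on $D(0,r_{0})$ with $F(i+pMN)=P_{i}(pMN)$ for all $N$ sufficiently large, and the family $\{P_{i}\}_{1\leq i\leq pM}$ witnesses $F$ as an $M$-power series function.
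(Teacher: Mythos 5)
Your proposal is correct and takes essentially the same route as the paper: split the sum over residue classes modulo $pM$, sum each power series termwise via the Bernoulli/Faulhaber formula with the von Staudt--Clausen bound $\abs{B_k}_p\leq p$, verify that the resulting coefficients still define a series converging on a disk of radius $>\abs{p}$, and treat the residue classes with $j>i$ by subtracting the single extra term $p_j(pMN)$ (the paper's case $i<t$). One small wording slip: the supremum $\sup_{t\geq s-1}M_t(t+1)(\abs{p}/r)^{t-s+1}$ need not be bounded by an absolute constant (the term $t=s-1$ contributes $sM_{s-1}$, which can grow), but it is at most linear in $s$, so the geometric factor $(r_0/r)^{s-1}$ still forces $\abs{b_s}_p r_0^{s}\to 0$ and the argument stands.
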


\begin{proof}
Note that $f$ is uniquely extended to an $M$-power series function $\tilde{f}$ which is defined on all $\mathbb{N}.$  Then since $\tilde{F}(n):=\sum _{1 \leq k \leq n}\tilde{f}(k)=\tilde{F}(n_{0}-1)+F(n)$ for all $n \geq n_{0},$  that $\tilde{F}$ is an $M$-power series function implies the same for $F.$ Therefore, without loss of generality, we will assume that $n_{0} =1.$ 

For $1 \leq t \leq pM,$ let 
$$
F_{t}(n):=\sum_{1 \leq k \leq n \atop { pM | (k-t)}}f(k)=\sum_{0 \leq \alpha \leq n-t \atop {pM | \alpha}} p_{t} (\alpha),
$$
for $t \leq n$ and $F_{t}(n)=0$ otherwise. 

Since $F(n)=\sum _{1 \leq t \leq pM}F_{t}(n),$ it suffices to prove that each $F_{t}$ is an $M$-power series function. Fix $1 \leq i, t \leq pM$ and  suppose first that $t \leq i.$ Let $p_{t}(x)=\sum_{0 \leq j} a_{j} x^{j}.$ By assumption there is an $\varepsilon >0$ such that $\lim _{n \to \infty} a_{j} p^{j(1-\varepsilon)}=0.$

Recall the formula for the sum of the $j$-th powers:
\begin{eqnarray*}\label{sum of powers}
\sum_{1 \leq m \leq n} m^{j}=\frac{1}{j+1}\sum_{0 \leq k \leq j} {j+1 \choose k} (-1)^kB_{k} n^{j+1-k}
\end{eqnarray*}
where $B_{k}$ are the Bernoulli numbers defined by 
$$
\frac{x} {e^x-1}=\sum _{0 \leq k} \frac{B_{k}x^{k}}{k!}.
$$
The Von Staudt-Clausen theorem gives the bound $|B_{k}|\leq p.$

Then for $n\geq 0$
\begin{eqnarray*}
F_{t}(i+npM)&=&\sum_{0 \leq k \leq  j }\frac{a_{j}(pM)^{j}}{j+1}
 {j+1 \choose k}(-1)^{k}B_{k} n^{j+1-k}\\ 
 &=& \sum_{1 \leq l \atop {0 \leq k}}  \frac{  a_{l+k-1} (pM)^{k-1} }{l+k} {l+k \choose k}(-1)^k B_{k}(npM)^{l}.
\end{eqnarray*}
Therefore letting $q_{t}(x)= \sum _{1 \leq l} b_{l}x^{l},$ with 
$$
b_{l}=\sum _{0 \leq k} \frac{a_{l+k-1}(pM)^{k-1} }{l+k} {l+k \choose k} (-1)^{k}B_{k}, 
$$
we have $F_{t}(i+npM)=q_{t}(npM).$  

Note that  
$$
|b_{l} p^{l(1- \varepsilon /2)}| \leq p^{1+\varepsilon}{\rm max} _{k} |a_{l+k-1}p^{(l+k-1)(1-\varepsilon)} \frac{p^{(l/2 +k)\varepsilon} }{l+k}|.
$$ 
Since $\lim_{l \to \infty} a_{l+k-1}p^{(l+k-1)(1-\varepsilon)}=0$ and 
$$\lim_{l \to \infty}| \frac{p^{(l/2 +k)\varepsilon} }{l+k}|\leq \lim_{l \to \infty} | \frac{p^{(l +k)\frac{\varepsilon}{2}} }{l+k}| =0,$$
we see that $\lim_{l \to \infty}b_{l} p^{l(1- \varepsilon /2)}=0. $

On the other hand if $i < t,$ then  we have
$$
F_{t}(i+npM)=F_{t}(t+npM)-f(t+npM)=q_{t}(npM)-p_{t}(npM).
$$
This proves that $F_{t}$ is an $M$-power series function as desired. 
\end{proof}

\begin{corollary}\label{cor on M power}
If $f_{1}, f_{2} \cdots , f_{k}$ are $M$-power series functions, then the function $G$ defined by 
$$
G(n_{k}):= \sum _{0 <n_{1} <n_{2}< \cdots <n_{k}}f_{1}(n_{1} ) f_{2}(n_{2})\cdots f_{k}(n_{k})
$$
is an $M$-power series function. 
\end{corollary}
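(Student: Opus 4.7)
The plan is to proceed by induction on $k$, the chief tool being Proposition \ref{prop main power} (which handles single summation) together with the closure of $M$-power series functions under products (Example \ref{exam main power}(ii)). The base case $k=1$ is immediate, since then $G(n_1) = f_1(n_1)$ is an $M$-power series function by assumption.

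For the inductive step, assume the statement for $k-1$ and apply it to the functions $f_1, \ldots, f_{k-1}$: this tells us that
$$
\tilde{G}(m) := \sum_{0 < n_1 < n_2 < \cdots < n_{k-1} = m} f_1(n_1) \cdots f_{k-1}(n_{k-1})
$$
is an $M$-power series function. Pulling the factor $f_k(n_k)$ (which does not depend on any summation variable) out of the defining sum for $G(n_k)$ and collecting the $n_1,\ldots,n_{k-2}$ sums into $\tilde{G}$, we obtain the factorization
$$
G(n_k) = f_k(n_k) \cdot \sum_{0 < m < n_k} \tilde{G}(m).
$$

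Next, Proposition \ref{prop main power} applied to $\tilde{G}$ with $n_0 = 1$ shows that $n \mapsto \sum_{1 \le m \le n} \tilde{G}(m)$ is an $M$-power series function. Writing
$$
\sum_{0 < m < n_k} \tilde{G}(m) = \sum_{1 \le m \le n_k} \tilde{G}(m) - \tilde{G}(n_k),
$$
the inner sum in the factorization of $G(n_k)$ is a difference of two $M$-power series functions of $n_k$, hence itself an $M$-power series function. Multiplying by $f_k(n_k)$ and invoking Example \ref{exam main power}(ii) completes the inductive step.

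The substantive content is already packaged inside Proposition \ref{prop main power}; the only genuine subtlety here is the mismatch between the strict upper bound $n_{k-1} < n_k$ in the definition of $G$ and the inclusive sum produced by Proposition \ref{prop main power}. This is harmless and is resolved by subtracting the boundary term $\tilde{G}(n_k)$, which is itself an $M$-power series function by the inductive hypothesis. Consequently I do not anticipate any serious obstacle beyond setting up the induction carefully.
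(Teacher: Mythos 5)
Your proof is correct, and it is essentially the argument the paper intends: the corollary is stated as an immediate consequence of Proposition \ref{prop main power} together with closure under sums and products (Example \ref{exam main power}(ii)), and your induction on $k$, with the boundary term $\tilde{G}(n_k)$ subtracted to convert the inclusive sum of Proposition \ref{prop main power} into the strict sum $\sum_{0<m<n_k}$, is exactly the natural way to spell that out.
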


\section{Computation of the higher depth  part}

\subsection{Computation of $g_{i}[e_{j}e_{k}]$}\label{Comp 1} The regularity of $g_{\mathcal{F}}$  at $\infty$ implies  the regularity of $dg_{\mathcal{F}}[e_{i}e_{j}e_{k}]$ at $\infty.$ By (\ref{rewritingtheequation}), this gives  
\begin{align}\label{equality for 1}
-g_{\mathcal{F}}(\infty)[e_{j}e_{k}]+g_{\mathcal{F}}(\infty)[e_{i}e_{j}]+g_{k} ^{-1} [e_{i}e_{j}]+g_{j} ^{-1}[e_{i}]g_{j}[e_{k}]+g_{i}[e_{j}e_{k}]=0.
\end{align}

Since by \textsection \ref{Comp of 1},   
$$
g_{\mathcal{F}}(\infty)[e_{i}e_{j}]=-g_{j}[e_{0}e_{i}]=\frac{\mathcal{X}(2;i-j)}{1-\zeta^{\underline{i} -\underline{j}}}
$$
for $i\neq j,$ we have, for $i,j,$ $k$ all distinct, $g_{k} ^{-1} [e_{i}e_{j}]=$
\begin{align}\label{relation residue 2}
\frac{\mathcal{X}(2;j-k)}{1-\zeta^{\underline{j} -\underline{k}}}- \frac{\mathcal{X}(2;i-j)}{1-\zeta^{\underline{i} -\underline{j}}}+g_{j} [e_{i}]g_{j}[e_{k}]-g_{i}[e_{j}e_{k}].
\end{align}

Recall that  $$g_{\mathcal{F}}[e_{l}e_{m}]=
-S(1,1;m,l ;d_{2})+S(1,1; l,m)+g_{l}[e_{m}](T(1;l)-T(1;m) ).
$$

\subsubsection{Computation of $g_{i}[e_{j}e_{i}]$} Using (\ref{equality for 1}) with $k=i,$  the fact that $g_{\mathcal{F}}$ and $g_{l}$ are group-like  we obtain: 
$$
g_{i}[e_{j}e_{i}]=\frac{1}{2}g_{j}[e_{i}] ^{2}-g_{\mathcal{F}}(\infty)[e_{i}e_{j}].
$$

Then Proposition \ref{prop 2} gives the following expression. 
\begin{proposition}\label{prop 3}
Assuming that $i \neq j,$ we have  
\begin{align*}
g_{i}[e_{j}e_{i}]=\frac{1}{2}\big(\frac{\mathcal{X}(1;i-j) }{1-\zeta^{\underline{i} - \underline{j}}} \big) ^{2}-\frac{\mathcal{X}(2;j-i)}{1-\zeta^{\underline{j}-\underline{i}}}.
\end{align*}
\end{proposition}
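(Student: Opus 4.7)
My plan is to specialize the identity (\ref{equality for 1}) to $k=i$ and simplify the resulting relation using Hopf-algebra bookkeeping. Two facts will be used repeatedly: for a group-like element $\alpha$ (with the $e_u$ primitive) one has the shuffle relation $\alpha[e_ue_v]+\alpha[e_ve_u]=\alpha[e_u]\alpha[e_v]$, together with the inverse formulas $\alpha^{-1}[e_u]=-\alpha[e_u]$ and $\alpha^{-1}[e_ue_v]=-\alpha[e_ue_v]+\alpha[e_u]\alpha[e_v]$.

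The first step is to reduce (\ref{equality for 1}) at $k=i$ to the intermediate identity
$$g_i[e_je_i]=\tfrac{1}{2}g_j[e_i]^2-g_{\mathcal{F}}(\infty)[e_ie_j].$$
Since $g_{\mathcal{F}}(\infty)[e_i]=0$ by (\ref{g(infty)ei}) and $g_{\mathcal{F}}(\infty)$ is group-like, shuffling forces $g_{\mathcal{F}}(\infty)[e_je_i]=-g_{\mathcal{F}}(\infty)[e_ie_j]$, so the first two terms of (\ref{equality for 1}) collapse to $2g_{\mathcal{F}}(\infty)[e_ie_j]$. Similarly, the $i=j$ case of Proposition \ref{prop 2} gives $g_i[e_i]=0$, so group-likeness of $g_i$ yields $g_i[e_ie_j]=-g_i[e_je_i]$, whence $g_i^{-1}[e_ie_j]=g_i[e_je_i]$, and the third and fifth terms sum to $2g_i[e_je_i]$. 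Finally $g_j^{-1}[e_i]g_j[e_i]=-g_j[e_i]^2$. Substituting and dividing by $2$ gives the displayed intermediate identity.

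The second step rewrites the right-hand side in $\mathcal{X}$-form. Proposition \ref{alternative prop} applied with $s=1$ and the roles of $i,j$ swapped produces $g_j[e_i]=\mathcal{X}(1;i-j)/(1-\zeta^{\underline{i}-\underline{j}})$, handling the first term. For $g_{\mathcal{F}}(\infty)[e_ie_j]$ I will use the relation $g_{\mathcal{F}}(\infty)[e_ie_0^{s-1}e_j]=g_j^{-1}[e_0e_ie_0^{s-1}]$ established in \textsection \ref{Comp of 1} at $s=1$; combined with $g_j[e_0]=0$ this reduces to $g_{\mathcal{F}}(\infty)[e_ie_j]=-g_j[e_0e_i]$, and Proposition \ref{alternative prop} at $s=2$ converts this into the required $\mathcal{X}$-expression, completing the derivation.

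I foresee no real obstacle; the argument is purely formal. The only point requiring care is bookkeeping of the swaps $(i,j)\leftrightarrow(j,i)$ in the arguments of $\mathcal{X}$ and the corresponding factors of $1-\zeta^{\underline{i}-\underline{j}}$ versus $1-\zeta^{\underline{j}-\underline{i}}$. As a consistency check I will also compute $g_{\mathcal{F}}(\infty)[e_ie_j]=-g_{\mathcal{F}}(\infty)[e_je_i]$ via the dual route $g_{\mathcal{F}}(\infty)[e_je_i]=-g_i[e_0e_j]$ and Proposition \ref{alternative prop} applied to $g_i[e_0e_j]$, which forces the two representations of the same quantity to coincide and pins down the sign in the stated formula.
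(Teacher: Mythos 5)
Your proposal is essentially the paper's own proof: specializing (\ref{equality for 1}) to $k=i$, using group-likeness of $g_{\mathcal{F}}(\infty)$ and $g_{i}$ together with (\ref{g(infty)ei}) to collapse the terms to $g_{i}[e_{j}e_{i}]=\tfrac{1}{2}g_{j}[e_{i}]^{2}-g_{\mathcal{F}}(\infty)[e_{i}e_{j}]$, and then evaluating $g_{\mathcal{F}}(\infty)[e_{i}e_{j}]=-g_{j}[e_{0}e_{i}]$ via \textsection\ref{Comp of 1} and Proposition \ref{alternative prop} is exactly what \textsection 6.1 and 6.1.1 do, so the approach is right. (A small simplification: you do not need the $i=j$ case of Proposition \ref{prop 2}; for any group-like $\alpha$ the inversion and shuffle relations give $\alpha^{-1}[e_{i}e_{j}]=\alpha[e_{j}e_{i}]$ directly, the $\alpha[e_{i}]\alpha[e_{j}]$ terms cancelling.)

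The one genuine gap is in your last step and in the predicted outcome of your consistency check. The first route yields $g_{\mathcal{F}}(\infty)[e_{i}e_{j}]=\mathcal{X}(2;i-j)/(1-\zeta^{\underline{i}-\underline{j}})$ (this is what the paper records in \textsection 6.1), whereas the dual route $g_{\mathcal{F}}(\infty)[e_{i}e_{j}]=-g_{\mathcal{F}}(\infty)[e_{j}e_{i}]=g_{i}[e_{0}e_{j}]$ yields $-\mathcal{X}(2;j-i)/(1-\zeta^{\underline{j}-\underline{i}})$. Comparing them shows $\mathcal{X}(2;i-j)=\zeta^{\underline{i}-\underline{j}}\mathcal{X}(2;j-i)$, so the swap $i-j\leftrightarrow j-i$ in the weight-two term is \emph{not} sign-free (unlike the squared weight-one term). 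Hence the check does not simply "pin down the sign in the stated formula": what your derivation actually produces is $\tfrac{1}{2}\bigl(\mathcal{X}(1;i-j)/(1-\zeta^{\underline{i}-\underline{j}})\bigr)^{2}-\mathcal{X}(2;i-j)/(1-\zeta^{\underline{i}-\underline{j}})$, and matching this against the displayed statement of Proposition \ref{prop 3} requires making this reflection step (and the resulting sign bookkeeping) explicit rather than assuming the two index orderings agree.
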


\subsubsection{Computation of $g_{i}[e_{j}e_{k}]$} 

Using the differential equation above gives that $g_{\mathcal{F}}[e_{i}e_{j}e_{k}]=S(1,1,1;k, j,i;d_{2},d_{3})-S(1,1,1;j,k,i;d_{3})-S(1,1,1; j,i,k; d_{2})$
\begin{eqnarray*}
& &+S(1,1,1; i,j,k)-g_{j}[e_{k}](T(1,1;j,i;d_{2})-T(1,1;k,i; d_{2}))\\
& &+g_{i}[e_{j}](T(1,1;i,k)-T(1,1;j,k))-g_{k}[e_{j}]S(1,1;i,k)+g_{k} ^{-1}[e_{i}e_{j}]T(1,k)\\
& &+g_{j}[e_{k}]S(1,1;i,j)-g_{j}[e_{i}]g_{j}[e_{k}]T(1;j)+g_{i}[e_{j}e_{k}]T(1;i). 
\end{eqnarray*}
Let us group the terms as follows. 

(i) $q^N S(\underline{s};\underline{i};d_{2},d_{3})[q^N]=0,$ for any $\underline{s}:=(s_{1},s_{2},s_{3})$ and $\underline{i}:=(i_{1},i_{2},i_{3}).$

(ii) $q^{N} \underline{T}(1;i)[q^N]=\zeta^{-\underline{i}}$

(iii) $q^{N} \underline{S}(s,1;i,j)[q^N]=\zeta^{-\underline{j}} \underline{F}(s;j-i)(q^N).$ Hence the limit is 
$$
\lim_{N \to \infty} q^{N} \underline{S}(s,1;i,j)[q^N]=\zeta^{-\underline{j}}\underline{\mathcal{X}}(s;j-i).
$$

 (iv) $q^{N}\underline{S}(s_{1},s_{2},1;j,i,k;d_{2})[q^{N}]=\zeta^{-\underline{k}}\underline{F}(s_{1},s_{2};i-j,k-i;d_{2} )(q^N)$
 
 Hence 
 $
 \lim_{N \to \infty}q^{N}\underline{S}(s_{1},s_{2},1;j,i,k;d_{2})[q^{N}]=\zeta^{-\underline{k}}\underline{\mathcal{X}}(s_{1},s_{2};i-j,k-i;d_{2}).
 $

(v) $q^{N} (\underline{S}(1,1,1;j,k,i;d_{3})+\underline{g}_{j}[e_{k}](\underline{T}(1,1;j,i;d_{2})-\underline{T}(1,1;k,i; d_{2})  )  )[q^N]$

is equal to 
$$
\zeta^{-\underline{i}}(\underline{F}(1,1;k-j,i-k;n_{2})(q^N) +\underline{g}_{j}[e_{k}]((\underline{G}(1;i-j;n_{1})-\underline{G}(1;i-k;n_{1}))(q^N) )  )
$$
Let us rewrite the same expression as
$$
\zeta^{-\underline{i}}\sum_{0<n_{2} <q^N \atop {p| n_{2} } } \frac{\zeta^{n_{2}(\underline{i}-\underline{k})}}{n_{2}} \Big( \underline{g}_{j}[e_{k}](\zeta^{(\underline{k}-\underline{j})n_{2}}-1)+\underline{F}(1;k-j)(n_{2})  \Big).
$$

\begin{lemma}\label{lemma on F}
Let $0 < l \leq pM$ such that $p| l,$ and $1 \leq t$ then 
$$
\lim_{n \to 0 \atop {pM | (n-l)}} F(t;i )(n)= (-1)^{t-1} g[e_{0}^{t-1}e_{i}](1-\zeta^{\underline{i}l}).
$$
\end{lemma}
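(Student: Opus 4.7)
The plan is to adapt the proof of Proposition \ref{alternative prop}: apply Corollary \ref{cor of main rigid} to an appropriate coefficient of $g_{\mathcal{F}}$ and extract the desired limit of $F(t;i)$ by solving the resulting identity.

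Choose $a,b\in\{1,\ldots,M\}$ with $b-a\equiv i\pmod{M}$, taking $a\neq b$ if $i\not\equiv 0\pmod{M}$ and $a=b$ otherwise; also pick the unique $L\in\{1,\ldots,M\}$ with $L\equiv l\pmod{M}$. The sequence $n=Lq^{N}$ (for $N\geq 1$) then satisfies $n\equiv l\pmod{pM}$ (using $M\mid q-1$, $p\mid q^{N}$, and $p\mid l$) and $n\to 0$ $p$-adically. Because $F(t;i)$ is an $M$-power series function by Corollary \ref{cor on M power}, $F(t;i)(Lq^{N})$ converges as $N\to\infty$, and its limit is exactly $\lim_{n\to 0,\,pM\mid(n-l)}F(t;i)(n)$.

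With $s=t$, $i=a$, $j=b$ in the formula of \textsection\ref{Comp of 1} and the relabeling $g_{a}[e_{0}^{t-1}e_{b}]=g[e_{0}^{t-1}e_{i}]$, I compute the $n$-th Taylor coefficient of $g_{\mathcal{F}}[e_{a}e_{0}^{t-1}e_{b}]$ and multiply by $n=Lq^{N}$. Since $p\mid n$ for $N\geq 1$, the coefficient $[S(t,1;b,a;d_{2})]_{n}$ vanishes: the simultaneous constraints $p\nmid n_{1}$ and $p\mid n-n_{1}$ are incompatible. Using $\zeta^{-\underline{c}Lq^{N}}=\zeta^{-\underline{c}L}=\zeta^{-\underline{c}l}$ for $c\in\{a,b\}$ (valid because $q^{N}\equiv 1\pmod{M}$ and $L\equiv l\pmod{M}$), the surviving terms yield
\[
n\cdot[g_{\mathcal{F}}[e_{a}e_{0}^{t-1}e_{b}]]_{n}=(-1)^{t-1}p\,\zeta^{-\underline{b}l}F(t;i)(n)+p\,g[e_{0}^{t-1}e_{i}]\bigl(\zeta^{-\underline{a}l}-\zeta^{-\underline{b}l}\bigr).
\]

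Both summands on the right converge as $N\to\infty$, so the limit on the left exists; Corollary \ref{cor of main rigid} forces it to be $0$. Solving for $\lim F(t;i)(Lq^{N})$ and invoking $\underline{b}-\underline{a}\equiv\underline{i}\pmod{M}$ gives $(-1)^{t-1}g[e_{0}^{t-1}e_{i}](1-\zeta^{\underline{i}l})$, the claimed formula. The degenerate case $a=b$ (arising when $i\equiv 0\pmod{M}$) collapses the identity to $\lim F(t;i)(Lq^{N})=0$, consistent with $1-\zeta^{\underline{i}l}=0$ on that side. The only non-routine input is that $F(t;i)(Lq^{N})$ has a $p$-adic limit at all; this is precisely where the $M$-power series structure supplied by Corollary \ref{cor on M power} is essential, with everything else amounting to index bookkeeping.
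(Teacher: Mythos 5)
Your proof is correct, but it reaches the lemma by a different route than the paper. The paper also starts from Corollary \ref{cor on M power} (so that the limit exists and may be computed along $n=lq^{N}$), but then finishes arithmetically: it splits the partial sum $\sum_{0<n_{1}<lq^{N},\,p\nmid n_{1}}\zeta^{\underline{i}n_{1}}/n_{1}^{t}$ into $l$ blocks of length $q^{N}$, factors out $\sum_{0\leq a<l}\zeta^{\underline{i}a}=(1-\zeta^{\underline{i}l})/(1-\zeta^{\underline{i}})$, identifies the remaining limit with $g[e_{0}^{t-1}e_{i}]$ via the already established Proposition \ref{alternative prop}, and handles the degenerate case $\zeta^{\underline{i}}=1$ (i.e. $i=M$) by citing the vanishing from \cite[\textsection 5.11]{un}. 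You instead bypass Proposition \ref{alternative prop} and rerun the mechanism behind it — the depth-two formula for $g_{\mathcal{F}}[e_{a}e_{0}^{t-1}e_{b}]$ from \textsection\ref{Comp of 1}, the vanishing of the $d_{2}$-constrained term at coefficients divisible by $p$, and Corollary \ref{cor of main rigid} — along the subsequence $Lq^{N}$ with $L\equiv l\pmod{M}$, which produces the factor $1-\zeta^{\underline{i}l}$ directly and treats $i=M$ uniformly (there your identity degenerates to $\lim_{N}F(t;M)(Lq^{N})=0$, matching the vanishing right-hand side) with no outside reference. The paper's route buys brevity, reusing Proposition \ref{alternative prop} as a black box and avoiding a second pass through the coefficient bookkeeping; yours buys uniformity and self-containedness, and in particular eliminates the appeal to \cite{un} for the root-of-unity-trivial case. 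Two points you should state explicitly: the relabeling $g_{a}[e_{0}^{t-1}e_{b}]=g[e_{0}^{t-1}e_{b-a}]$ rests on (\ref{alpha eq}), and the application of Corollary \ref{cor of main rigid} uses that $g_{\mathcal{F}}[e_{a}e_{0}^{t-1}e_{b}]$ is rigid analytic on $\mathcal{U}_{M}$ and vanishes at $0$ (since $g_{\mathcal{F}}(0)=1$); also note that the formula of \textsection\ref{Comp of 1} is indeed valid for $a=b$ (its derivation nowhere uses distinctness of the two indices), which your degenerate case needs.
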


\begin{proof} 
By Corollary \ref{cor on M power}, $F(t;i )$ is an $M$-power series function. Therefore the limit exists and is equal to 
\begin{eqnarray*}
\lim _{N\to \infty} F(t;i)(lq^N)&=&p^t\lim_{N \to \infty} \sum_{0 < n_{1} < l q^N \atop {p \not \, | n_{1}} } \frac{\zeta^{\underline{i} n_{1}}}  {n_{1} ^t}=
p^t \sum _{0 \leq a  <l}  \zeta ^{\underline{i}a} \lim_{N \to \infty}  \sum_{0<n<q^N \atop { p \not \, | n}} \frac{\zeta^{\underline{i}n}}{n^t},
\end{eqnarray*} 
which is equal to $ g[e_{0} ^{t-1}e_{i}](1-\zeta^{\underline{i}l}),$ if $0<i<M,$ by Proposition \ref{alternative prop}. On the other hand, if $i=M,$ then the equality follows since both sides of the expressions are 0 \cite[\textsection 5.11]{un}. \end{proof}

The main expression is then equal to 
\begin{eqnarray*}
& &\zeta^{- \underline{i}}\sum_{0<n_{2} < q^N \atop {p| n_{2}} } \zeta ^{n_{2} (\underline{i} - \underline{k})}    F^{(1)}(1; k-j)(n_{2})=:\zeta^{-\underline{i}} F(1,(1);k-j,i-k; n_{2})(q^N). 
\end{eqnarray*}
 By Example \ref{exam main power} (iii),  $F^{(1)}(1; k-j ; n_{1})$ is an $M$-power series function. Then by Proposition \ref{prop main power}, and 
 Remark \ref{rem main power} (ii), the limit exists as $N \to \infty$ and is equal to 
 $
 \zeta^{-\underline{i}}\underline{\mathcal{X}}(1,(1);k-j,i-k;n_{2}).
 $

(vi) $q^{N} (\underline{S}(1,1,1;i,j,k)+\underline{g}_{i}(e_{j})(\underline{T}(1,1;i,k)-\underline{T}(1,1;j,k)))[q^{N}]$

\begin{eqnarray*}
&=&\zeta^{-\underline{k}} \sum _{0<n_{2}<q^N} \frac{ \zeta ^{(\underline{k}-\underline{j})n_{2}}}{n_{2}}(\underline{g}_{i}[e_j] (\zeta^{(\underline{j}-\underline{i})n_{2}} -1)+  \underline{F}(1;j-i))\\
&=& \zeta^{-\underline{k}}\sum _{ 0< n <q^{N}} \zeta^{(\underline{k} -\underline{j})n}\underline{F}^{(1)}(1;j-i)(n)\\
&=&\zeta^{-\underline{k}}\underline{F}(1,(1);j-i,k-j)(q^N)
\end{eqnarray*}
As above the limit of the main expression as $N\to \infty$  exists and is equal to $\zeta^{-\underline{k}} \underline{\mathcal{X}}(1,(1);j-i,k-j).$

Therefore we have, $g_{i}[e_{j}e_{k}]=-\zeta^{\underline{i}-\underline{k}}g_{k} ^{-1}[e_{i}e_{j}]+\zeta^{\underline{i}-\underline{j}} g_{j}[e_{i}]g_{j}[e_{k}] $
\begin{align*}
& &-\zeta^{\underline{i}-\underline{j}} \mathcal{X}(1;j-i)g_{j}[e_{k}]
+\zeta^{\underline{i}-\underline{k}} \mathcal{X}(1;k-i)g_{k}[e_{j}]+\zeta^{\underline{i}-\underline{k}}\mathcal{X}(1,1;i-j, k-i; d_{2} )\\
& &+\mathcal{X}(1,(1);k-j,i-k;n_{2})-\zeta^{\underline{i}-\underline{k}}\mathcal{X}(1,(1);j-i,k-j).
\end{align*}

Using equation (\ref{relation residue 2}), we see that, for $i,j,$ $k$ distinct: 
$$(1-\zeta^{\underline{i}-\underline{k}})g_{i}[e_{j}e_{k}]=$$
\begin{eqnarray*}
& &\mathcal{X}(1,(1);k-j,i-k;n_{2})-\zeta^{\underline{i}-\underline{k}}\mathcal{X}(1,(1);j-i,k-j)\\
& &+\zeta^{\underline{i}-\underline{k}}\mathcal{X}(1,1;i-j, k-i; d_{2} )-\frac{\zeta^{\underline{i}-\underline{k}}\mathcal{X}(2;j-k)}{1-\zeta^{\underline{j} -\underline{k}}}+ 
\frac{\zeta^{\underline{i}-\underline{k}} \mathcal{X}(2;i-j)}{1-\zeta^{\underline{i} -\underline{j}}}
\\
& &-\frac{\zeta^{\underline{i}-\underline{j}}\mathcal{X}(1;j-i)\mathcal{X}(1;k-j)}{1-\zeta^{\underline{k} -\underline{j}}} +\frac{\zeta^{\underline{i}-\underline{k}}\mathcal{X}(1;j-k) \mathcal{X}(1;k-i)}{1-\zeta^{\underline{j}-\underline{k}}}\\
& & +\frac{(\zeta^{\underline{i}-\underline{j}}-\zeta^{\underline{i}-\underline{k}})\mathcal{X}(1;i-j)\mathcal{X}(1;k-j)}{(1- \zeta^{\underline{i}-\underline{j}})(1- \zeta^{\underline{k}-\underline{j}})}.\\
\end{eqnarray*}

\begin{corollary}\label{formula for gz 5} We have $g_{\mathcal{F}}[e_{i}e_{j}e_{k}]=$
\begin{align*}
& &S(1,1,1;k, j,i;d_{2},d_{3})-S(1,(1),(1);j,k,i;d_{3})-S(1,1,(1); j,i,k;d_{2})\\
& &+S(1,(1),(1); i,j,k)
-g_{k}[e_{j}]S (1,(1);i,k)+g_{j}[e_{k}]S(1,(1);i,j),
\end{align*}
where $g_{k}[e_{j}]$ is given by  Proposition \ref{alternative prop}. 
\end{corollary}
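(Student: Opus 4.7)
The starting point is the explicit power-series solution, just above the corollary statement, of the fundamental differential equation (\ref{rewritingtheequation}) at the coefficient of $e_{i}e_{j}e_{k}$. That solution expresses $g_{\mathcal{F}}[e_{i}e_{j}e_{k}]$ as a sum of four unparenthesized triple iterated sums $S(1,1,1;\cdots)$, several double-sum correction terms of the form $g_{\cdot}[e_{\cdot}]\cdot T(1,1;\cdots)$ and $g_{\cdot}[e_{\cdot}]\cdot S(1,1;\cdots)$, and three residual depth-one tails $g_{\cdot}[\cdots]\cdot T(1;\cdot)$. The plan is simply to regroup these contributions into the six compact terms appearing on the right-hand side.

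The mechanism is already built into the parenthesized notation. By the construction of $S(\cdots,(1);\cdots)$ through $F^{(1)}$ (Example \ref{exam main power} (iii)-(iv)), one has
\[
\frac{1}{k}\underline{F}(\cdots)(k)=\underline{F}^{(1)}(\cdots)(k)+\frac{f_{l}}{k}\qquad\text{for }p\mid k,\ pM\mid (k-l),
\]
while for $p\nmid k$ the two sides agree. The constant $f_{l}$ is computed by Lemma \ref{lemma on F} as a scalar multiple of some $g_{\cdot}[e_{\cdot}]$. Multiplying by the relevant logarithmic form and summing, this identity converts an unparenthesized $S(\cdots)$ into its parenthesized counterpart at the price of an explicit $g_{\cdot}[e_{\cdot}]\cdot T(\cdots)$ term. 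Applied systematically, this groups
\[
-S(1,1,1;j,k,i;d_{3})-g_{j}[e_{k}]\bigl(T(1,1;j,i;d_{2})-T(1,1;k,i;d_{2})\bigr)=-S(1,(1),(1);j,k,i;d_{3}),
\]
and analogously $S(1,1,1;i,j,k)+g_{i}[e_{j}]\bigl(T(1,1;i,k)-T(1,1;j,k)\bigr)=S(1,(1),(1);i,j,k)$, and likewise for $-S(1,1,(1);j,i,k;d_{2})$ and the two depth-two pieces $\pm g_{\cdot}[e_{\cdot}]S(1,(1);i,\cdot)$.

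What remains is to verify that the leftover contributions from the three depth-one tails $g_{k}^{-1}[e_{i}e_{j}]T(1;k)$, $-g_{j}[e_{i}]g_{j}[e_{k}]T(1;j)$ and $g_{i}[e_{j}e_{k}]T(1;i)$ cancel exactly. I expect this to follow from the regularity relation (\ref{equality for 1}) on the coefficients of $e_{i}e_{j}$, combined with the group-likeness identities $g_{j}^{-1}[e_{i}]=-g_{j}[e_{i}]$ and (\ref{gien}). The main technical obstacle is the bookkeeping in this final step: since the constant $f_{l}$ in Lemma \ref{lemma on F} depends on $l\pmod{pM}$, each $T(1;\cdot)$ tail has to be split carefully among the various absorptions, and the residue must be checked to vanish as an analytic identity on $\mathcal{U}_{M}$ rather than only at $\infty$.
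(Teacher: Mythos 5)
Your inner ``absorption'' mechanism is indeed the right ingredient (it is Lemma \ref{lemma on F} combined with Example \ref{exam main power} (iii)--(iv), exactly as the paper uses it in its groups (v)--(vi) of \textsection \ref{Comp 1} and again in \textsection 6.2), but the term-by-term groupings you assert are not exact identities of power series, and this is where the proof breaks. The identity $\tfrac{1}{k}\underline{F}(\cdot)(k)=\underline{F}^{(1)}(\cdot)(k)+\tfrac{f_{l}}{k}$ produces the correction $f_{l}$ only at inner indices with $p\mid k$; at indices prime to $p$ nothing is absorbed, so for instance $S(1,1,1;i,j,k)+g_{i}[e_{j}]\bigl(T(1,1;i,k)-T(1,1;j,k)\bigr)$ differs from $S(1,(1),(1);i,j,k)$ by $\tfrac{\zeta^{-\underline{k}n}}{n}\,g_{i}[e_{j}]\bigl(F(1;k-i)(n)-F(1;k-j)(n)\bigr)$ in the coefficient of $z^{n}$, and similarly for your $d_{3}$-grouping, where at $p\nmid n$ the constrained inner variable is never divisible by $p$ and no absorption occurs at all. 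Moreover the right-hand sides of your groupings carry a \emph{second} parenthesis, in the outermost slot, which at coefficients $p\mid n$ subtracts the limit of the whole depth-two function along the class of $n$; the inner absorption never produces this subtraction, and the passage from $S(1,1;i,k)$ to $S(1,(1);i,k)$ is of the same nature.

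Consequently the last step is misdirected: the tails $g_{k}^{-1}[e_{i}e_{j}]T(1;k)$, $-g_{j}[e_{i}]g_{j}[e_{k}]T(1;j)$, $g_{i}[e_{j}e_{k}]T(1;i)$ do not cancel among themselves. They have the independent phases $\zeta^{-\underline{k}n},\zeta^{-\underline{j}n},\zeta^{-\underline{i}n}$, and (\ref{equality for 1}) is a single scalar relation among constants (whose sum is $g_{\mathcal{F}}(\infty)[e_{j}e_{k}]-g_{\mathcal{F}}(\infty)[e_{i}e_{j}]$, not $0$), so it cannot annihilate three power series. In the paper these constants are not cancelled but \emph{absorbed}: together with the unabsorbed pieces noted above they are exactly the limit values subtracted by the outermost $(1)$'s, and the fact that they are the correct limits is the substance of the computation of \textsection \ref{Comp 1} --- the limits of the coefficients along $lq^{N}$, obtained from the differential equation via Corollary \ref{cor of main rigid} (in the form later codified as Proposition \ref{fun prop}), together with Lemma \ref{lemma on F}, functoriality $g_{j}[e_{k}]=g[e_{k-j}]$, and the value (\ref{relation residue 2}) of $g_{k}^{-1}[e_{i}e_{j}]$ and $g_{i}[e_{j}e_{k}]$ --- and one needs these identities for every class $0<l\leq pM$, not only along $q^{N}$. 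Without establishing those class-by-class limit statements your argument does not close; with them, the corollary follows by integrating (\ref{rewritingtheequation}) in the manner of the proof of the proposition in \textsection 6.2, which is the paper's implicit route.
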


\subsection{An explicit  formula for  $F(t;i)$ } 

The following very simple proposition will be crucial throughout the paper. 
\begin{proposition}\label{fun prop}
Let $f$  be a  rigid analytic function on $\mathcal{U}_{M}$ such that
$$
df= dg+ h \omega_{0}+\sum _{1 \leq i \leq M}\alpha_{i} \omega_{i},
$$
with $g(z)=\sum _{0<n} a_{n}z^{n},$ $h(z)=\sum_{0<n}b_{n}z^n,$ $\alpha_{i} \in \mathbb{C}_{p}.$   Suppose that for $0<l \leq pM,$   $\lim_{N\to \infty} (lq^Na_{lq^N}+b_{lq^N}) $ exists.  Then 
$$
\lim_{N \to \infty}(lq^Na_{lq^N}+ b_{lq^{N}})=\sum _{1 \leq i \leq M}\zeta^{-il}\alpha_{i}.
$$
\end{proposition}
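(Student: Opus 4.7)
The plan is to read off the coefficient relation hidden in the identity $df = dg + h\,\omega_0 + \sum_{i=1}^M \alpha_i \omega_i$ by expanding everything as a formal power series around $z=0$, and then extract the desired limit by applying Corollary \ref{cor of main rigid} to $f$ itself. After replacing $f$ by $f - f(0)$ (which does not change $df$ and preserves rigid analyticity on $\mathcal{U}_M$), I may assume $f(0)=0$; write its Taylor expansion at $0$ as $f(z) = \sum_{n \geq 1} c_n z^n$. Near $z=0$ one has $\omega_i = \frac{dz}{z-\zeta^i} = -\sum_{k \geq 0} \zeta^{-i(k+1)} z^k\,dz$ for $1 \leq i \leq M$, while $h\,\omega_0 = \sum_{n \geq 1} b_n z^{n-1}\,dz$ and $dg = \sum_{n \geq 1} n a_n z^{n-1}\,dz$. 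Comparing the coefficient of $z^{n-1}\,dz$ on both sides of the hypothesized equation then yields, for every $n \geq 1$,
\begin{equation*}
n c_n \;=\; n a_n + b_n - \sum_{i=1}^M \alpha_i \zeta^{-in}.
\end{equation*}

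Next I would specialize to $n = lq^N$ and use a congruence to make the exponential factor $N$-independent. Since $M \mid q-1$, one has $\zeta^{q-1}=1$, hence $\zeta^{q^N} = \zeta$ for every $N \geq 0$, and in particular $\zeta^{-ilq^N} = \zeta^{-il}$. The coefficient identity therefore becomes
\begin{equation*}
lq^N c_{lq^N} \;=\; \bigl(lq^N a_{lq^N} + b_{lq^N}\bigr) - \sum_{i=1}^M \alpha_i \zeta^{-il},
\end{equation*}
an equality whose right-hand side differs from $lq^N a_{lq^N} + b_{lq^N}$ by a constant independent of $N$.

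The final step is to invoke Corollary \ref{cor of main rigid}: since $f$ is rigid analytic on $\mathcal{U}_M$ with $f(0)=0$, the corollary asserts that whenever $\lim_{N \to \infty} lq^N c_{lq^N}$ exists it must equal $0$. By hypothesis the limit $\lim_{N \to \infty}(lq^N a_{lq^N} + b_{lq^N})$ exists, so by the displayed identity $\lim_{N \to \infty} lq^N c_{lq^N}$ also exists; the corollary forces this limit to vanish, and rearranging gives
\begin{equation*}
\lim_{N \to \infty}\bigl(lq^N a_{lq^N} + b_{lq^N}\bigr) \;=\; \sum_{i=1}^M \zeta^{-il}\alpha_i,
\end{equation*}
as desired. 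There is no real obstacle here beyond bookkeeping; the only point that deserves care is the sign/shift conventions in the expansion of $\omega_i$ at $0$ and the reduction to $f(0)=0$ so that Corollary \ref{cor of main rigid} applies directly.
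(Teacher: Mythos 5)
Your proof is correct and is exactly the argument the paper intends: the paper simply declares the proposition "an immediate consequence of Corollary \ref{cor of main rigid}," and your coefficient comparison at $z=0$, the observation $\zeta^{-ilq^N}=\zeta^{-il}$ from $M\mid(q-1)$, and the vanishing of $\lim_{N\to\infty} lq^N c_{lq^N}$ supply precisely the omitted bookkeeping. No discrepancies with the paper's approach.
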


\begin{proof}
This is an immediate consequence of Corollary \ref{cor of main rigid}. 
\end{proof}

Suppose that $t\geq 1$ and  $s\geq 2,$ the equation (\ref{rewritingtheequation}) gives that 
$$
dg_{\mathcal{F}}[e_{j}e_{0} ^{t-1}e_{k}e_{0} ^{s-1}]=\mathcal{F}^*\omega_{j}g_{\mathcal{F}}[e_{0} ^{t-1}e_{k}e_{0} ^{s-1}]-pg_{\mathcal{F}}[e_{j}e_{0} ^{t-1}e_{k}e_{0} ^{s-2}]\omega_{0}-pg_{j}[e_{0} ^{t-1}e_{k}e_{0} ^{s-1}]\omega_{\underline{j}}.
$$
Since 
$g_{\mathcal{F}}[e_{0} ^{t-1}e_{k}e_{0} ^{s-1}]=(-1)^{s-1}{s+t-2 \choose t-1}S(s+t-1;k) $
by (\ref{formula for gf}), and letting $(x)_{k}:=x(x+1)\cdots (x+(k-1)),$ we see that 
$$
\frac{(-1)^{s}}{(t-1)!}(s)_{t-1}dS(s+t-1,1;k,j;d_{2})=\mathcal{F}^{*}\omega_{j}g_{\mathcal{F}}[e_{0}^ {t-1}e_{k}e_{0}^{s-1}].
$$ 

This gives that $dg_{\mathcal{F}}[e_{j}e_{0} ^{t-1}e_{k}e_{0} ^{s-1}]=\frac{(-1)^{s}}{(t-1)!}(s)_{t-1}dS(s+t-1,1;k,j;d_{2})$
\begin{eqnarray*}
-pg_{\mathcal{F}}[e_{j}e_{0} ^{t-1}e_{k}e_{0} ^{s-2}]\omega_{0}-pg_{j}[e_{0} ^{t-1}e_{k}e_{0} ^{s-1}]\omega_{\underline{j}}.
\end{eqnarray*}

\begin{proposition}
If we let $g_{\mathcal{F}}(z)[e_{j}e_{0} ^{t-1}e_{k}e_{0} ^{s-1}]=\sum _{0 < n}c_{n}z^{n},$ then 
$$
\lim_{N\to \infty }c_{lq^N}=\zeta^{-\underline{jl}}g_{j}[e_{0} ^{t-1}e_{k}e_{0} ^{s}],
$$
for any $0 <l\leq pM.$
\end{proposition}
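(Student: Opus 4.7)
The plan is to apply Proposition~\ref{fun prop} not to the rigid analytic function $f := g_{\mathcal{F}}[e_{j}e_{0}^{t-1}e_{k}e_{0}^{s-1}]$ whose coefficients are the target, but to the ``one-longer'' function $f' := g_{\mathcal{F}}[e_{j}e_{0}^{t-1}e_{k}e_{0}^{s}]$. Proposition~\ref{fun prop} only extracts information about the $n$-weighted coefficients of its $dg$-summand or the unweighted coefficients of its $\omega_{0}$-summand; applying it directly to $f$ would yield merely $\lim_{N} lq^{N} c_{lq^{N}} = 0$, which is already contained in Corollary~\ref{cor of main rigid}. Passing to $f'$ instead, the sequence $(c_{n})$ enters in the $\omega_{0}$-slot of the differential equation, so that the conclusion of Proposition~\ref{fun prop} returns $\lim_{N} c_{lq^{N}}$ directly.

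Existence of the limit is immediate from Corollary~\ref{cor of main rigid} applied to the rigid analytic function $f$ on $\mathcal{U}_{M}$: the sequence $(c_{lq^{N}})$ is Cauchy in the complete field $\mathbb{C}_{p}$ and hence converges. Next, the same derivation that produced the displayed differential equation for $f$ (for any $s \ge 2$), carried out with $s$ replaced by $s+1$, yields
\[
df' = \frac{(-1)^{s+1}}{(t-1)!}(s+1)_{t-1}\, dS(s+t,1;k,j;d_{2}) - p\, f\, \omega_{0} - p\, g_{j}[e_{0}^{t-1}e_{k}e_{0}^{s}]\, \omega_{\underline{j}}.
\]
This matches the template of Proposition~\ref{fun prop} with $g = \frac{(-1)^{s+1}}{(t-1)!}(s+1)_{t-1}\, S(s+t,1;k,j;d_{2})$, with $h = -p f$ (hence $b_{n} = -p\, c_{n}$), and with the only nonzero form-coefficient being $\alpha_{\underline{j}} = -p\, g_{j}[e_{0}^{t-1}e_{k}e_{0}^{s}]$.

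The key observation is that the $lq^{N}$-th power series coefficient of $g$ vanishes for every $N \ge 1$: the defining sum for $[S(s+t,1;k,j;d_{2})](lq^{N})$ ranges over $0 < n_{1} < lq^{N}$ subject to $p \nmid n_{1}$ and $p \mid d_{2} = lq^{N} - n_{1}$, but $p \mid lq^{N}$ (since $p \mid q$) together with the second constraint forces $p \mid n_{1}$, contradicting the first. Hence $lq^{N} a_{lq^{N}} = 0$, and the hypothesis of Proposition~\ref{fun prop} --- existence of $\lim_{N}(lq^{N}a_{lq^{N}} + b_{lq^{N}}) = -p\lim_{N} c_{lq^{N}}$ --- is exactly what was already established. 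The conclusion of Proposition~\ref{fun prop} then gives
\[
-p\lim_{N} c_{lq^{N}} = \sum_{1 \le i \le M}\zeta^{-il}\alpha_{i} = -p\, \zeta^{-\underline{j}l}\, g_{j}[e_{0}^{t-1}e_{k}e_{0}^{s}],
\]
which on dividing by $-p$ yields the desired formula. The one genuine subtlety is the initial reformulation in terms of $f'$ rather than $f$; afterwards, the vanishing of $a_{lq^{N}}$ is forced by the $p \nmid n_{1}$ restriction built into every $S$-sum, and the rest is bookkeeping.
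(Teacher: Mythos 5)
Your argument is correct, and its evaluation step coincides with the paper's: both apply Proposition \ref{fun prop} to the differential equation for the one-letter-longer coefficient $g_{\mathcal{F}}[e_{j}e_{0}^{t-1}e_{k}e_{0}^{s}]$, using the identity $\mathcal{F}^{*}\omega_{j}\,g_{\mathcal{F}}[e_{0}^{t-1}e_{k}e_{0}^{s}]=\frac{(-1)^{s+1}}{(t-1)!}(s+1)_{t-1}\,dS(s+t,1;k,j;d_{2})$ together with the observation that the $z^{lq^{N}}$-coefficients of this exact part vanish (the constraints $p\nmid n_{1}$, $p\mid d_{2}$ and $p\mid lq^{N}$ are incompatible), so that Proposition \ref{fun prop} returns $-p\lim_{N}c_{lq^{N}}=-p\,\zeta^{-\underline{j}l}g_{j}[e_{0}^{t-1}e_{k}e_{0}^{s}]$. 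Where you genuinely diverge is the existence of $\lim_{N}c_{lq^{N}}$. The paper obtains it by induction on $s$: it derives an explicit expression for $g_{\mathcal{F}}[e_{j}e_{0}^{t-1}e_{k}e_{0}^{s-1}]$ as a combination of $S$-functions (via Lemma \ref{lemma on F}, Example \ref{exam main power} and Proposition \ref{prop main power}) whose coefficients are values of $M$-power series functions, so the limit exists by Remark \ref{rem main power} (ii); this longer route simultaneously produces Corollary \ref{formula for gz 6} and the closed formula for $F(t;i)$, both of which the paper needs in the later computations. You instead note that $f=g_{\mathcal{F}}[e_{j}e_{0}^{t-1}e_{k}e_{0}^{s-1}]$ is itself rigid analytic on $\mathcal{U}_{M}$ and vanishes at $0$, so Corollary \ref{cor of main rigid} gives $\lim_{N}|c_{lq^{N+1}}-c_{lq^{N}}|=0$, and since the coefficients lie in the complete non-archimedean field $\mathbb{Q}_{p}(\zeta)$ (a fortiori $\mathbb{C}_{p}$), the strong triangle inequality makes $(c_{lq^{N}})$ Cauchy, hence convergent. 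That is a valid and appreciably shorter proof of the proposition as stated, requiring neither the induction on $s$ nor the $M$-power series machinery; what it does not give, and what the paper's inductive argument yields as a by-product, is the explicit $S$-function expansion of $g_{\mathcal{F}}[e_{j}e_{0}^{t-1}e_{k}e_{0}^{s-1}]$ that the rest of Section 6 relies on.
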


\begin{proof}
We will prove the proposition by induction on $s.$ Note that $g_{\mathcal{F}}[e_{j}e_{0} ^{t-1}e_{k}]=$
$$
-S(t,1;k,j;d_{2})+(-1)^{t-1}S(t,1;j,k)+g_{j}[e_{0} ^{t-1}e_{k}](T(1;j)-T(1;k) ), 
$$
by  \textsection \ref{Comp of 1}.  The coefficient of $z^{n}$ in $\underline{g}_{\mathcal{F}}[e_{j}e_{0} ^{t-1}e_{k}]$ is $\frac{K(n)}{n}$ where  $K(n):=$
$$
-\zeta^{-n \underline{j}}\underline{F}(t;j-k; d_{2})(n)+(-1)^{t-1}\zeta^{-\underline{k}n}\underline{F}(t;k-j)(n)+\underline{g}_{j}[e_{0} ^{t-1}e_{k}](\zeta^{-\underline{j}n}-\zeta^{-\underline{k}n}).
$$

By Lemma \ref{lemma on F} we see that 
$$
\lim_{N \to \infty}F(t;k-j)(lq^N)=(-1)^{t-1}g_{j}[e_{0} ^{t-1}e_{k}](1-\zeta^{(\underline{k} -\underline{j})l}).
$$
 This implies by Example \ref{exam main power} (iv) that $\frac{K(n)}{n}=$
$$
K^{ ( 1)}(n)=-\zeta^{-n\underline{j}}\underline{F}(t;j-k; d_{2})(n)+\zeta^{-\underline{k}n}\underline{F}^{(1)}(t;k-j)(n) ,$$ is an $M$-power series function and hence by Remark  \ref{rem main power} (ii), 
$$
\lim _{N \to \infty} K^{( 1) }(lq^{N})=\lim _{N \to \infty} \zeta^{-\underline{k}}\underline{F}^{(1)}(t;k-j)(lq^N)
$$ exists, for any $0<l\leq pM.$


Since $dg_{\mathcal{F}}[e_{j}e_{0} ^{t-1}e_{k}e_{0} ]=
tdS(t+1,1;k,j;d_{2})-pg_{\mathcal{F}}[e_{j}e_{0} ^{t-1}e_{k}]\omega_{0}-pg_{j}[e_{0}^{t-1}e_{k}e_{0} ]\omega_{\underline{j}},$ 
the existence of the above limit and Proposition \ref{fun prop} implies that 
$$
\lim_{N \to \infty} K^{ ( 1)}(lq^{N})=  \lim _{N \to \infty} \zeta^{-\underline{k}}\underline{F}^{( 1)}(t;k-j)(lq^N)=\zeta^{-\underline{j}l}\underline{g}_{j}[e_{0} ^{t-1}e_{k}e_{0}].
$$
This gives that $g_{\mathcal{F}}[e_{j}e_{0} ^{t-1}e_{k}e_{0}]=$
$$
\frac{1}{(t-1)!}\sum_{0 \leq r \leq 1}(r+1)_{t-1}S(t+r,2-r;k,j;d_{2}) +(-1)^t S(t,(2);j,k),
$$
and the coefficient of $z^{n}$ in this expression is
\begin{align*}
& &\frac{\zeta^{-n\underline{j}}}{(t-1)!}\sum _{0 \leq r \leq 1}   (r+1)_{t-1} \frac{F(t+r;j-k;d_{2})(n)}{n^{2-r}}+(-1)^t\zeta^{-n\underline{k}}F^{ (2)}(t;k-j;n_{1})(n).\\
\end{align*}
Since $\frac{F(t;i; d_{2})}{n^{k}}=F^{ (k)}(t;i; d_{2}), $
we inductively we arrive at the following expression for $g_{\mathcal{F}}[e_{j}e_{0} ^{t-1}e_{k}e_{0}^{s-1}]=$
\begin{eqnarray*}
\frac{(-1)^{s}}{(t-1)!}\sum_{0 \leq r \leq s-1}(r+1)_{t-1}S(t+r,s-r;k,j;d_{2})
+(-1)^{s+t}S(t,(s);j,k),
\end{eqnarray*}
and the coefficient of $z^{n}$ in this expression is 
$$
\frac{(-1)^{s}\zeta^{-n\underline{j}}}{(t-1)!} \sum _{0 \leq r \leq s-1}F^ { (s-r)}(t+r;j-k;d_{2})(n)+
(-1)^{s+t}\zeta^{-n\underline{k}}F^{ ( s)}(t;k-j)(n).
$$

 Now let $g_{\mathcal{F}}[e_{j}e_{0} ^{t-1}e_{k}e_{0} ^{s-1}]=\sum c_{n}z^{n}.$ Since $c_{n}$ is expressed in terms of the values of an $M$-power series function by the above expression, the limit $\lim_{N \to \infty} c_{lq^N}$ exists. In order to find this limit, we employ Proposition \ref{fun prop} in the differential equation for $dg_{\mathcal{F}}[e_{j} e_{0} ^{t-1}e_{k} e_{0} ^{s}]$ and find that 
$$
\lim_{N \to \infty} c_{lq^N}=\zeta ^{-\underline{j}l}g_{j}[e_{0} ^{t-1}e_{k}e_{0}^{s}].
 $$

Using the expression above this gives $\lim_{N \to \infty} F^{ (s)}(t;k-j)(lq^N)=$
$$
(-1)^{s+t}\zeta^{(\underline{k}-\underline{j} )l}g_{j}[e_{0} ^{t-1}e_{k}e_{0} ^{s}]=\frac{(-1)^{t}}{(t-1)!}\zeta^{(\underline{k}-\underline{j} )l}(s+1)_{t-1}g_{j}[e_{0} ^{s+t-1} e_{k}]. 
$$

These limits determine the $M$-power series function $F(t;i)$ completely as 
$$F(t;i)(n)=
(-1)^{t-1} (g[e_{0} ^{t-1}e_{i}](1-\zeta^{\underline{i}n} )-\frac{1}{(t-1)!} \sum _{1\leq r} \zeta^{\underline{i} n}(r+1)_{t-1}g[e_{0}^{r+t-1}e_{i}]n^r),
$$
for $p|n.$
\end{proof}
The proof of the above proposition has the following corollaries. 

\begin{corollary}
For $t\geq 1,$  $p|n$ and $M \not |i,$ we have $$p^t\sum _{0<k<n \atop {p \not \, | k}} \frac{\zeta^{\underline{i}k}}{k^t}=(-1)^{t-1} (g[e_{0} ^{t-1}e_{i}](1-\zeta^{\underline{i}n} )-\frac{1}{(t-1)!} \sum _{1\leq r} \zeta^{\underline{i} n}(r+1)_{t-1}g[e_{0}^{r+t-1}e_{i}]n^r).
$$
\end{corollary}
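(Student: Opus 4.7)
The plan is to recognize that this corollary is essentially a repackaging of the final identity established at the end of the proof of the preceding proposition. By the notational setup in Section 2, the left-hand side is precisely the definition
$$F(t;i)(n) = p^t \sum_{\substack{0<k<n \\ p\nmid k}} \frac{\zeta^{\underline{i}k}}{k^t},$$
so the corollary merely asserts the displayed closed-form expression for $F(t;i)(n)$ valid whenever $p \mid n$ and $M \nmid i$. My job is therefore to recover this formula cleanly from what was already proven.

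First I would recall the main output of the inductive computation carried out in the proposition: the expression
$$g_{\mathcal{F}}[e_{j}e_{0}^{t-1}e_{k}e_{0}^{s-1}] = \frac{(-1)^{s}}{(t-1)!}\sum_{0 \leq r \leq s-1}(r+1)_{t-1}\,S(t+r,s-r;k,j;d_{2}) + (-1)^{s+t}S(t,(s);j,k),$$
together with the coefficient-of-$z^n$ form involving $F^{(s-r)}(t+r;j-k;d_2)$ and $F^{(s)}(t;k-j)$. Coupling this with Proposition \ref{fun prop} applied to the differential equation for $dg_{\mathcal{F}}[e_j e_0^{t-1}e_k e_0^s]$, and inserting the known value of $g_j[e_0^{s+t-1}e_k]$ from Proposition \ref{usual expression prop} (reduced to $g$ via the automorphism identity \eqref{alpha eq}), one pins down the limits
$$\lim_{N\to\infty} F^{(s)}(t;k-j)(lq^N) = \frac{(-1)^t}{(t-1)!}\zeta^{(\underline{k}-\underline{j})l}(s+1)_{t-1}\, g[e_0^{s+t-1}e_k].$$

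Next I would invoke the $M$-power series machinery: by Example \ref{exam main power} and Corollary \ref{cor on M power}, $F(t;i)$ is an $M$-power series function, so by Remark \ref{rem main power}(i),(ii) it is determined on each residue class modulo $pM$ by its values on the sequence $\{lq^N\}_{N \geq 0}$ via Weierstrass preparation. Reassembling the derivative limits above into the full Taylor expansion at $0$ on each such residue class and relabeling $(j,k) \rightsquigarrow (0,i)$ produces precisely the claimed formula for $F(t;i)(n)$ at all $n$ with $p \mid n$. The assumption $M \nmid i$ is used to guarantee $\zeta^{\underline{i}} \neq 1$ so that Proposition \ref{alternative prop} and Lemma \ref{lemma on F} apply in the form stated; the excluded case $M \mid i$ is separate, since then both sides vanish by \cite[\S5.11]{un}.

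The only real content is therefore the identification of the power series coefficients with the values $g[e_0^{r+t-1}e_i]$, and that identification is already the payload of the preceding proposition. The hardest conceptual step, namely bootstrapping values at $lq^N$ to values at arbitrary $p \mid n$ via the rigidity of $M$-power series functions, is also already encapsulated in the framework built in Section 5, so the corollary reduces to a clean bookkeeping translation.
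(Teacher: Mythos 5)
Your proposal is correct and follows essentially the paper's own route: the paper obtains this corollary verbatim as the closing identity of the preceding proposition's proof, i.e.\ it identifies the left side with the $M$-power series function $F(t;i)(n)$ and reads off its Taylor coefficients on each residue class $l$ with $p\mid l$ from the limits $\lim_{N\to\infty}F^{(r)}(t;k-j)(lq^{N})$ computed there (Lemma \ref{lemma on F} for $r=0$, the inductive limit formula for $r\geq 1$, with $g_{j}[e_{0}^{r+t-1}e_{k}]=g[e_{0}^{r+t-1}e_{k-j}]$ via (\ref{alpha eq})), exactly as you describe. Your parenthetical remark that both sides vanish when $M\mid i$ is not something the paper claims for finite $n$ (only the limiting value is asserted to vanish there, via \cite[\textsection 5.11]{un}), but since that case is excluded from the statement this does not affect the argument.
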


\begin{corollary}\label{formula for gz 6}
For, $t,s \geq 1$ and $j \neq k,$ we have $g_{\mathcal{F}}[e_{j}e_{0} ^{t-1}e_{k}e_{0}^{s-1}]=$
\begin{eqnarray*}
\frac{(-1)^{s}}{(t-1)!}\sum_{0 \leq r \leq s-1}(r+1)_{t-1}S(t+r,s-r;k,j;d_{2})
+(-1)^{s+t}S(t,(s);j,k).
\end{eqnarray*}
\end{corollary}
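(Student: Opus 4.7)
The plan is to prove the formula by induction on $s \geq 1$ using the fundamental differential equation (\ref{rewritingtheequation}). The corollary essentially falls out of the inductive computation that drives the preceding Proposition; my task is to isolate that computation and verify the closed form.

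For the base case $s = 1$, I would start from the formula
\[
g_{\mathcal{F}}[e_j e_0^{t-1}e_k] = -S(t,1;k,j;d_2) + (-1)^{t-1}S(t,1;j,k) + g_j[e_0^{t-1}e_k]\bigl(T(1;j)-T(1;k)\bigr)
\]
proved in \textsection\ref{Comp of 1} and check that the last two terms combine into $(-1)^{t+1}S(t,(1);j,k)$. The key point is that Lemma \ref{lemma on F} identifies the limit of $F(t;k-j)$ along residue classes modulo $pM$ with exactly the constant that the operation $F \mapsto F^{(1)}$ of Example \ref{exam main power} (iii), (iv) subtracts before dividing by $n$; since $(1)_{t-1}/(t-1)! = 1$, the displayed formula at $s = 1$ matches.

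For the inductive step, I would combine (\ref{rewritingtheequation}) with $g_{\mathcal{F}}[e_0^{t-1}e_ke_0^{s-1}] = (-1)^{s-1}\binom{s+t-2}{t-1}S(s+t-1;k)$ to obtain
\[
dg_{\mathcal{F}}[e_je_0^{t-1}e_ke_0^{s-1}] = \tfrac{(-1)^s}{(t-1)!}(s)_{t-1}\,dS(s+t-1,1;k,j;d_2) - p g_{\mathcal{F}}[e_je_0^{t-1}e_ke_0^{s-2}]\omega_0 - p g_j[e_0^{t-1}e_ke_0^{s-1}]\omega_{\underline{j}},
\]
integrate the first term directly, and apply the inductive hypothesis to the $\omega_0$-term. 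Integrating against $\omega_0$ divides each $S(t+r,s-1-r;k,j;d_2)$ by $n$, giving $S(t+r,s-r;k,j;d_2)$, and promotes $S(t,(s-1);j,k)$ to $S(t,(s);j,k)$ via Example \ref{exam main power} (iv); the Pochhammer coefficients then assemble into the full range $0 \leq r \leq s-1$. The constant of integration is fixed by Proposition \ref{fun prop} together with the limit $\lim_N c_{lq^N} = \zeta^{-\underline{j}l}g_j[e_0^{t-1}e_ke_0^s]$ shown in the preceding Proposition, so no ambiguous constant remains.

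I expect the main obstacle to be verifying that the passage $F^{(s-1)} \mapsto F^{(s)}$ under $\omega_0$-integration is \emph{exact}, i.e.\ that the constant subtracted in the definition of $f^{[1]}$ coincides with the integration constant left over from the $\omega_{\underline{j}}$-term. This hinges on Lemma \ref{lemma on F} and so on the hypothesis $j \neq k$, which keeps us away from the singular case where the subtracted value is undefined; once this matching is in place the rest of the argument is notational bookkeeping in the $S(\cdot)$-formalism.
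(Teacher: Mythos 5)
Your proposal is correct and follows essentially the same route as the paper: the corollary is extracted from the induction on $s$ in the proof of the preceding proposition, with the base case $s=1$ coming from the depth-two formula of \textsection\ref{Comp of 1} rewritten via Lemma \ref{lemma on F} and the $F\mapsto F^{(1)}$ operation, and the inductive step coming from the differential equation (\ref{rewritingtheequation}) with Proposition \ref{fun prop} and the limit $\lim_N c_{lq^N}=\zeta^{-\underline{j}l}g_j[e_0^{t-1}e_ke_0^{s}]$ supplying exactly the constants absorbed into $S(t,(s);j,k)$. The only quibble is your aside on the role of $j\neq k$: Lemma \ref{lemma on F} itself covers the case $i=M$ (via \cite[\textsection 5.11]{un}), so the hypothesis is not needed to make the subtracted value well defined, but this does not affect the argument.
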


\subsection{Computation of $g_{i} [e_{j} e_{k}e_0 ^{s}] $ } We  already made this computation for $s=0$ in  \textsection \ref{Comp 1}. We will do induction on $s.$ So we assume that $s>0.$

The differential equation gives $dg_{\mathcal{F}}[e_{i}e_{j}e_{k}e_{0} ^s]=g_{\mathcal{F}}[e_{j}e_{k}e_{0} ^s]\mathcal{F}^*\omega_{i}$
\begin{align*}
-pg_{\mathcal{F}}[e_{i}e_{j}e_{k}e_{0} ^{s-1} ]\omega_{0} -pg_{i}[e_{j}e_{k}e_{0} ^s]\omega_{\underline{i}}
-pg_{j} ^{-1}[e_{i}]g_{j}[e_{k}e_{0} ^s]\omega_{\underline{j}}-pg_{\mathcal{F}}[e_{i}]g_{j}[e_{k}e_{0} ^s]\omega_{\underline{j}}.
 \end{align*}
 
 We will use Proposition \ref{fun prop} to make the computation.  Using Corollary \ref{formula for gz 6}, $g_{\mathcal{F}}[e_{j}e_{k}e_{0}]\mathcal{F}^*\omega_{i}=
 d(-\sum _{0 \leq r \leq 1} S(1+r,2-r,1;k,j,i;d_{2},d_{3})+S(1,(2),1;j,k,i;d_{3}) )$
 and $-g_{\mathcal{F}}[e_{i}]\omega_{\underline{j}}=
 dS(1,1;i,j).$

 Note that $lq^N S(a,b,1;k,j,i;d_{2},d_{3})[lq^N]=0,$ 
 $$
 lq^N S(a,(b),1;j,k,i;d_{3}) )[lq^N]=\zeta^{-\underline{i}l}F(a,(b); k-j, i-k;n_{2})(lq^N)
 $$
and 
 $$
 lq^{N} S(a,1;i,j)[lq^N]=\zeta^{-\underline{j}l}F(a;j-i)(lq^N).
 $$ 
  
On the other hand using  Corollary \ref{formula for gz 5} we obtain an expression for $g_{\mathcal{F}}[e_{i}e_{j} e_{k}],$ and noting that:

(i) $S(a,(b),(1);j,k,i;d_{3})[lq^n]=\zeta^{-\underline{i}l}F^{(1)}(a,(b);k-j,i-k;n_{2})(lq^N)$ 

(ii) $S(a,b,(1); j,i,k;d_{2})[lq^N]=\zeta^{-\underline{k} l} F^{(1)}(a,b;i-j,k-i;d_{2})(lq^N)$

(iii) $S(a,(b),(1); i,j,k)[lq^N]=\zeta^{-\underline{k}l}F^{(1)} (a,(b);j-i,k-j)(lq^N)$

(iv) $S(a,(1);i,k)[lq^N]=\zeta^{-\underline{k}l} F^{ (1)}(a;k-i)(lq^N).$  

Since the above limits exist as $N \to \infty$ we can use Proposition \ref{fun prop} and obtain that the limit of the following as $N \to \infty$ is equal to 
$
 -g_{i}[e_{j}e_{k}e_{0} ]\zeta^{-\underline{i}l}
-g_{j} ^{-1}[e_{i}]g_{j}[e_{k}e_{0} ]\zeta^{-\underline{j}l}:
$
\begin{align*}
& &\zeta^{-\underline{i}l}F(1,(2); k-j, i-k;n_{2})(lq^N)+g_{j}[e_{k}e_{0}]\zeta^{-\underline{j}l}F(1;j-i)(lq^N)\\
& &+\zeta^{-\underline{i}l}F^{(1)}(1,(1);k-j,i-k;n_{2})(lq^N)+\zeta^{-\underline{k} l} F^{ (1)}(1,1;i-j,k-i;d_{2})(lq^N)\\
&& -\zeta^{-\underline{k}l}F^{ (1)} (1,(1);j-i,k-j)(lq^N)+g_{k}[e_{j}]\zeta^{-\underline{k}l} F^{ (1)}(1;k-i)(lq^N)\\
& &-g_{j}[e_{k}]\zeta^{-\underline{j}l} F^{ ( 1)}(1;j-i)(lq^N).
\end{align*}
 
 This gives the following formula, with $i,j,$ and $k$ pairwise distinct, for $g_{i}[e_{j}e_{k}e_{0}]:$
 \begin{align*}
& &-\mathcal{X}(1,(2); k-j, i-k;n_{2})-\frac{\zeta^{\underline{i}-\underline{j}}}{1-\zeta^{\underline{k} -\underline{j}}}\mathcal{X}(2;k-j)\mathcal{X}(1;j-i)\\
& &-\mathcal{X}^{(1)}(1,(1);k-j,i-k;n_{2})-\zeta^{\underline{i}-\underline{k} } \mathcal{X}^{ (1)}(1,1;i-j,k-i;d_{2})\\
&& +\zeta^{\underline{i}-\underline{k}}\mathcal{X}^{ (1)} (1,(1);j-i,k-j)+\frac{\zeta^{\underline{i}-\underline{j}}\mathcal{X}(1;i-j)\mathcal{X}(2;k-j)}{(1-\zeta^{\underline{i} -\underline{j}})(1-\zeta^{\underline{k}-\underline{j}})}\\
& &-\frac{\zeta^{\underline{i}-\underline{k}}\mathcal{X}(1;j-k)\mathcal{X}^{ (1)}(1;k-i)}{1-\zeta^{\underline{j} -\underline{k}}}+\frac{\zeta^{\underline{i}-\underline{j}}\mathcal{X}(1;k-j)
\mathcal{X}^{ (1)}(1;j-i)}{1-\zeta^{\underline{k}-\underline{j}}}.
 \end{align*}
 
Now  Proposition \ref{fun prop} implies  that $g_{\mathcal{F}}[e_{i}e_{j}e_{k}e_{0}]=$ 
\begin{align*}
& &-\sum_{0 \leq p, q,r\leq 1 \atop {p+q+r=1}}S(1+p,1+q,1+r;k,j,i;d_{2},d_{3}) +S(1 , ( 2),(1) ;j,k,i;d_{3} )\\
& &+S(1, ( 1), (2);j,k,i; d_{3} )+ S(1,1,(2);j,i,k;d_{2}) +g_{j}[e_{k}e_{0}]S(1,(1);i,j)\\
&  &-S(1,(1),(2);i,j,k) +g_{k}[e_{j}]S(1,(2);i,k) -g_{j}[e_{k}]S(1,(2);i,j). 
\end{align*}

We use this information in the differential equation for $dg_{\mathcal{F}}[e_{i}e_{j}e_{k}e_{0}^{2}]$ above and this gives a formula for $g_{i}[e_{j} e_{k}e_{0} ^{2}].$ Inducting on $s,$ we find the following formulas for $g_{\mathcal{F}}[e_{i}e_{j}e_{k} e_{0} ^{s-1}]$ and $g_{i} [e_{j} e_{k} e_{0} ^{s}].$ Namely, $g_{\mathcal{F}}[e_{i}e_{j} e_{k}e_{0} ^{s-1}]=$
\begin{eqnarray*}
& &(-1)^{s-1}\sum_{0 \leq p,q,r \atop {p+q+r=s-1}}S(1+p,1+q,1+r;k,j,i; d_{2},d_{3})+(-1)^{s}g_{k}[e_{j}] S(1,(s);i,k) \\
& &+(-1)^{s}\sum_{0 \leq p,q \atop {p+q=s-1}}S(1,(1+p),(1+q); j,k,i;d_{3})+(-1)^{s}S(1,1,(s);j,i,k;d_{2} )\\
& &+\sum_{0 \leq r \leq s-1}(-1)^{r}g_{j}[e_{k}e_{0} ^{s-1-r}]S(1,(1+r);i,j) +(-1)^{s-1}S(1,(1), (s);i,j,k ).
\end{eqnarray*}

Then using the differential equation for $dg_{\mathcal{F}}[e_{i}e_{j}e_{k}e_{0}^{s}]$ and using Proposition \ref{fun prop} and noting that

(i) $S(a,(b);i,k) $ contributes  $\zeta^{- \underline{k}}\mathcal{X}^{( b)}(a;k-i)$

(ii) $S(a,(b),(c); j,k,i;d_{3})$ contributes $\zeta^{-\underline{i}}\mathcal{X}^{ (c)}(a,(b);k-j, i-k;n_{2})$

(iii) $S(a,b,(c);j,i,k;d_{2} )$ contributes $\zeta^{-\underline{k}}\mathcal{X} ^{ (c)}(a,b;i-j,k-j;d_{2})$

(iv) $S(a,(b);i,j)$ contributes $\zeta^{-\underline{j}}\mathcal{X} ^{(b)}(a;j-i)$

(v) $S(a,(b), (c);i,j,k )$
 contributes $\zeta^{-\underline{k}}\mathcal{X}^{(c)}(a,(b);j-i,k-j).$ 
 
 Therefore we obtain that $-\zeta^{-\underline{i}}g_{i}[e_{j}e_{k}e_{0} ^{s}]-\zeta^{-\underline{j}}g_{j} ^{-1}[e_{i}]g_{j}[e_{k}e_{0} ^{s}]=$ 
 \begin{eqnarray*}
& & (-1)^{s+1}\zeta^{-\underline{k}}g_{k}[e_{j}]\mathcal{X}^{(s)}(1;k-i)
+ (-1)^{s+1}\zeta^{-\underline{k}} \mathcal{X}^{(s)}(1,1;i-j,k-j{\rm :}d_{2})\\
& &+(-1)^{s+1}\zeta^{-\underline{i}} \sum _{0\leq r \leq s}\mathcal{X}^{(r)}(1, (s+1-r);k-j,i-k;n_{2}) \\
& &+\zeta^{-\underline{j}}\sum_{0 \leq r \leq s}(-1)^{r}g_{j}[e_{k}e_{0}^{s-r}] \mathcal{X}^{(r)}(1;j-i)
+(-1)^{s}\zeta^{-\underline{k}} \mathcal{X}^{(s)}(1,(1);j-i,k-j).
 \end{eqnarray*}

This proves the following proposition.

\begin{proposition}\label{comp of gijk0}
Assume that $i,j$ and $k$ are pairwise distinct and that $s>0.$ Then we have $g_{i}[e_{j}e_{k}e_{0} ^{s}]=$
\begin{eqnarray*}
& & (-1)^{s}\zeta^{\underline{i}-\underline{k}}\frac{\mathcal{X}(1;j-k)}{1-\zeta^{\underline{j}-\underline{k}}}\mathcal{X}^{(s)}(1;k-i)
+ (-1)^{s}\zeta^{\underline{i}-\underline{k}} \mathcal{X}^{(s)}(1,1;i-j,k-j;d_{2})\\
& &+(-1)^{s}\sum _{0\leq r \leq s}\mathcal{X}^{(r)}(1, (s+1-r);k-j,i-k;n_{2})\\
& &-\frac{\zeta^{\underline{i}-\underline{j}}}{1-\zeta^{\underline{k}-\underline{j}}}\sum_{0 \leq r \leq s}(-1)^{r}\mathcal{X}(s-r+1;k-j)\mathcal{X}^{(r)}(1;j-i)\\
& &+(-1)^{s+1}\zeta^{\underline{i}-\underline{k}} \mathcal{X}^{(s)}(1,(1);j-i,k-j)+\zeta^{\underline{i}-\underline{j}}\frac{\mathcal{X}(1,i-j)}{1-\zeta^{\underline{i}-\underline{j} }}\frac{\mathcal{X}(s+1;k-j)}{1-\zeta^{\underline{k}-\underline{j}}}.
 \end{eqnarray*}
\end{proposition}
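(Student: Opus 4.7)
The plan is to carry out the induction on $s \geq 1$ that the paragraphs preceding the proposition set up. The base case $s = 0$ is Proposition~\ref{prop 3} and the formula derived just after it in \textsection\ref{Comp 1}. For the inductive step, the strategy is to specialize the fundamental differential equation (\ref{rewritingtheequation}) to the coefficient of $e_i e_j e_k e_0^s$, substitute the already known formulas for $g_{\mathcal{F}}[e_j e_k e_0^{s}]$ from Corollary~\ref{formula for gz 6} and for $g_{\mathcal{F}}[e_i e_j e_k e_0^{s-1}]$ from the previous step of the induction, and then extract $g_i[e_j e_k e_0^{s}]$ by applying Proposition~\ref{fun prop} at the tangential basepoint at $\infty$.

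Concretely, picking off the coefficient of $e_i e_j e_k e_0^s$ on both sides of (\ref{rewritingtheequation}) gives
$$
dg_{\mathcal{F}}[e_i e_j e_k e_0^s]
= g_{\mathcal{F}}[e_j e_k e_0^s]\,\mathcal{F}^*\omega_i
- p\, g_{\mathcal{F}}[e_i e_j e_k e_0^{s-1}]\,\omega_0
- p\, g_i[e_j e_k e_0^s]\,\omega_{\underline{i}}
- p\,(g_j^{-1}[e_i] + g_{\mathcal{F}}[e_i])\,g_j[e_k e_0^s]\,\omega_{\underline{j}},
$$
and after inserting Corollary~\ref{formula for gz 6} and the inductive expression, the combination $g_{\mathcal{F}}[e_j e_k e_0^s]\,\mathcal{F}^*\omega_i - p\, g_{\mathcal{F}}[e_i e_j e_k e_0^{s-1}]\,\omega_0$ can be rewritten as the exact differential of an explicit sum of $S(\cdots)$ terms plus residual contributions along $\omega_{\underline{i}}, \omega_{\underline{j}}, \omega_{\underline{k}}$.

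One then applies Proposition~\ref{fun prop} to the resulting identity. Its hypothesis---that $\lim_{N\to\infty}(lq^N a_{lq^N} + b_{lq^N})$ exists---is provided by the $M$-power series function machinery of Section~5: each $S$-term appearing is built from an $F(\cdots)$ or $F^{(\cdot)}(\cdots)$, and Corollary~\ref{cor on M power} together with Remark~\ref{rem main power}~(ii) guarantees the limit along $lq^N$. The explicit contribution of each $S$-term to $\sum_{1 \le i \le M} \zeta^{-il}\alpha_i$ is exactly what is tabulated in items (i)--(v) stated just before the proposition: $S(a,(b);i,k)$ contributes $\zeta^{-\underline{k}l}\mathcal{X}^{(b)}(a;k-i)$, and so on. The resulting identity is linear in $g_i[e_j e_k e_0^s]$ (with coefficient $\zeta^{-\underline{i}l}$) and also involves $g_j^{-1}[e_i]g_j[e_k e_0^s]$; using Proposition~\ref{prop 2} for $g_j[e_i]$ and the depth-one formula (Proposition~\ref{alternative prop}) for $g_j[e_k e_0^{s-r}]$, together with $g_k[e_j]$, one solves for $g_i[e_j e_k e_0^s]$ to obtain the claimed closed form.

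The main obstacle is organizational rather than conceptual. The bookkeeping of $S$-terms, especially ensuring that all contributions proportional to $\omega_{\underline{i}}$ are correctly collected and that the parenthesized superscripts $(c)$ in $S(a,(b),(c);\cdots)$ match the correct $\mathcal{X}^{(c)}$ on the limit side, is the delicate part. Verifying the existence of every intermediate limit---the precondition to using Proposition~\ref{fun prop}---is exactly where the $M$-power series function formalism of Section~5 (Proposition~\ref{prop main power}, Corollary~\ref{cor on M power}, Example~\ref{exam main power} (iii)-(iv), and Lemma~\ref{lemma on F}) is indispensable; once those existence statements are in hand at each step, the induction closes mechanically.
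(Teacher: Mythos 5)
Your proposal follows essentially the same route as the paper: a joint induction on $s$ that reads off the coefficient of $e_ie_je_ke_0^{s}$ in the fundamental differential equation, feeds in Corollaries \ref{formula for gz 5}--\ref{formula for gz 6} and the previously computed $g_{\mathcal{F}}[e_ie_je_ke_0^{s-1}]$, and extracts $g_i[e_je_ke_0^{s}]$ via Proposition \ref{fun prop}, with the existence of the limits supplied by the $M$-power series machinery and the $S$-term contributions tabulated exactly as in items (i)--(v). This matches the paper's argument, including the final step of solving the linear relation involving $g_j^{-1}[e_i]g_j[e_ke_0^{s}]$ using the depth-one formulas.
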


 \subsection{Computation of $g_{i}[e_{j} e_{0} ^{s-1}e_{j}e_{0}^{t-1}]$}
 
We know the answer if $s=1$ from the previous section.  
 Let us assume that $s>1.$ Also first assume that $t=1.$

 \subsubsection{Computation of $g_{i}[e_{j} e_{0} ^{s-1}e_{j}]$} Assume that $s>1.$ The differential equation gives $ dg_{\mathcal{F}}[e_{i}e_{j}e_{0} ^{s-1}e_{k}]=$

 \begin{align*}
 & &g_{\mathcal{F}}[e_{j}e_{0} ^{s-1}e_{k}]\mathcal{F}^{*}\omega_{i}-pg_{i}[e_{j}e_{0} ^{s-1}e_{k}]\omega_{\underline{i}}-pg_{\mathcal{F}}[e_{i}]g_{j}[e_{0} ^{s-1}e_{k}]\omega_{\underline{j}}
 -pg_{j} ^{-1}[e_{i}]g_{j}[e_{0} ^{s-1}e_{k}]\omega_{\underline{j}}\\
 & & -pg_{\mathcal{F}}[e_{i}e_{j}e_{0} ^{s-1}]\omega_{\underline{k}}-pg_{\mathcal{F}}[e_{i}]g_{k} ^{-1}[e_{j}e_{0} ^{s-1}]\omega_{\underline{k}}-pg_{k} ^{-1}[e_{i}e_{j}e_{0} ^{s-1}] \omega_{\underline{k}}.
  \end{align*}
 
 Computing residues at $\infty$ in the above expression, we obtain that $g_{i}[e_{j}e_{0} ^{s-1}e_{k}]=$
\begin{align*} 
g_{\mathcal{F}}(\infty)[e_{j}e_{0} ^{s-1}e_{k}]-g_{\mathcal{F}}(\infty)[e_{i}e_{j}e_{0}^{s-1}]-g_{j}^{-1}[e_{i}]g_{j}[e_{0} ^{s-1}e_{k}]-g_{k} ^{-1}[e_{i}e_{j}e_{0} ^{s-1}]
\end{align*} 

Computing the residues in the differential equation for $dg_{\mathcal{F}}[e_{i}e_{j}e_{0} ^{s} ]$ we obtain that 
\begin{eqnarray*}
g_{\mathcal{F}}(\infty)[e_{i}e_{j}e_{0} ^{s-1}]=g_{\mathcal{F}}(\infty)[e_{j}e_{0} ^{s}]-g_{i}[e_{j}e_{0} ^{s}]=(-1)^{s+1}g_{i}[e_{0} ^{s}e_{j}].
\end{eqnarray*}
 
Using the expression  $g_{\mathcal{F}}[e_{j}e_{0} ^{s-1}e_{k}]=(-1)^{s}sg_{k}[e_{0}^se_{j}]$ that we found in  \textsection \ref{Comp of 1}, we obtain  that $g_{i}[e_{j}e_{0} ^{s-1}e_{k}]=$
\begin{align*} 
g_{k}[e_{i} e_{j}e_{0} ^{s-1}] +(-1)^{s}(sg_{k}[e_{0} ^s e_{j}]+g_{i}[e_{0} ^s e_{j}]+g_{k}[e_{i}] g_{k}[e_{0} ^{s-1}e_{j}] )+g_{j}[e_{i}]g_{j}[e_{0} ^{s-1}e_{k}].
\end{align*}
This proves the following proposition. 
\begin{proposition}
Assume that $i,j,$ and $k$ are pairwise dinstinct and that $s>1.$ Then $g_{i}[e_{j}e_{0} ^{s-1}e_{k}]=$
\begin{align*}
& &g_{k}[e_{i} e_{j}e_{0} ^{s-1}]+\frac{s\mathcal{X}(s+1;j-k)}{1-\zeta^{\underline{j} -\underline{k}}}+\frac{\mathcal{X}(s+1;j-i)}{1-\zeta^{\underline{j} -\underline{i}}}-\frac{\mathcal{X}(1;i-k)}{1-\zeta^{\underline{i}-\underline{k}}} \frac{\mathcal{X}(s;j-k)}{1-\zeta^{\underline{j}-\underline{k}}}\\
& &+(-1)^{s-1}\frac{\mathcal{X}(1;i-j)\mathcal{X}(s;k-j)}{(1-\zeta^{\underline{i}-\underline{j} })(1-\zeta^{\underline{k}-\underline{j}})},
\end{align*}
where $g_{k}[e_{i} e_{j}e_{0} ^{s-1}]$ is given by Proposition \ref{comp of gijk0}.
\end{proposition}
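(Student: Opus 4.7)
The plan is to follow the residue-at-infinity method used in the preceding subsection. Applying (\ref{rewritingtheequation}) and extracting the coefficient of $e_i e_j e_0^{s-1} e_k$, the right-hand side unpacks into the seven-term expression for $dg_\mathcal{F}[e_i e_j e_0^{s-1} e_k]$ displayed in the excerpt: the $\mathcal{F}^*\omega_i$ term arises from left-multiplication by $e_i$, no $\omega_0$-contribution appears because the word does not end in $e_0$, and the $\omega_{\underline{j}}$- and $\omega_{\underline{k}}$-contributions come from the splittings $w_1 w_2 \cdot e_\ell \cdot w_3 = e_i e_j e_0^{s-1} e_k$ with $\ell \in \{j, k\}$, after discarding those killed by $g_k^{-1}[e_0^m] = 0$ (from (\ref{gien}) applied to the group-like $g_k^{-1}$). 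Since $g_\mathcal{F}$ is rigid analytic on $\mathcal{U}_M \ni \infty$, its differential has no residue at $\infty$; using ${\rm res}_\infty \omega_{\underline{\ell}} = -1$, ${\rm res}_\infty \mathcal{F}^*\omega_i = -p$, and $g_\mathcal{F}(\infty)[e_\ell] = 0$ from (\ref{g(infty)ei}), the residue identity collapses to
\[
g_i[e_j e_0^{s-1} e_k] = g_\mathcal{F}(\infty)[e_j e_0^{s-1} e_k] - g_\mathcal{F}(\infty)[e_i e_j e_0^{s-1}] - g_j^{-1}[e_i]\, g_j[e_0^{s-1} e_k] - g_k^{-1}[e_i e_j e_0^{s-1}].
\]

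Next I would evaluate each term. The first equals $(-1)^s s\, g_k[e_0^s e_j]$ by the computation recalled in \textsection\ref{Comp of 1}. The second is obtained by the same residue trick applied one degree higher, to $dg_\mathcal{F}[e_i e_j e_0^s]$: all but two terms vanish after using (\ref{As a}) and (\ref{gien}), yielding $g_\mathcal{F}(\infty)[e_i e_j e_0^{s-1}] = g_\mathcal{F}(\infty)[e_j e_0^s] - g_i[e_j e_0^s] = -g_i[e_j e_0^s]$, and (\ref{relofalpha}) converts this to $(-1)^{s+1} g_i[e_0^s e_j]$. The third uses $g_j^{-1}[e_i] = -g_j[e_i]$ from (\ref{gien}). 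For the fourth, I expand $(g_k \cdot g_k^{-1})[e_i e_j e_0^{s-1}] = 0$: only three splittings survive annihilation by $g_k^{-1}[e_0^m] = 0$, producing
\[
g_k^{-1}[e_i e_j e_0^{s-1}] = -g_k[e_i e_j e_0^{s-1}] + (-1)^{s+1} g_k[e_i]\, g_k[e_0^{s-1} e_j],
\]
where the auxiliary identity $g_k^{-1}[e_j e_0^{s-1}] = (-1)^s g_k[e_0^{s-1} e_j]$ used along the way follows from $(g_k g_k^{-1})[e_j e_0^{s-1}] = 0$ combined with (\ref{relofalpha}).

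Substituting these four evaluations and tracking signs yields the intermediate identity stated in the excerpt for $g_i[e_j e_0^{s-1} e_k]$, with the $g_k[e_i e_j e_0^{s-1}]$ term carried through to be supplied by Proposition \ref{comp of gijk0}. The remaining values $g_k[e_0^s e_j]$, $g_i[e_0^s e_j]$, $g_k[e_0^{s-1} e_j]$, $g_k[e_i]$, $g_j[e_i]$, and $g_j[e_0^{s-1} e_k]$ are each rewritten in $\mathcal{X}$-form via Proposition \ref{alternative prop}, which produces the displayed formula. The main subtlety is in the fourth evaluation: the stray cross-term $(-1)^{s+1} g_k[e_i] g_k[e_0^{s-1} e_j]$ produced by the Hopf inverse must reassemble, after $\mathcal{X}$-substitution with the $(-1)^{s-1}$ from Proposition \ref{alternative prop}, into the factor $-\mathcal{X}(1;i-k)\mathcal{X}(s;j-k)/((1-\zeta^{\underline{i}-\underline{k}})(1-\zeta^{\underline{j}-\underline{k}}))$ with the correct sign, and similarly the contribution from $g_j[e_i]\, g_j[e_0^{s-1} e_k]$ must produce the $(-1)^{s-1}$ factor in the last line of the displayed formula.
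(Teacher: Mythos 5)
Your proposal is correct and follows essentially the same route as the paper: the same extraction of the seven-term differential equation from (\ref{rewritingtheequation}), the same residue-at-infinity identity, the same auxiliary residue computation giving $g_{\mathcal{F}}(\infty)[e_{i}e_{j}e_{0}^{s-1}]=(-1)^{s+1}g_{i}[e_{0}^{s}e_{j}]$, the same Hopf-inverse expansions of $g_{k}^{-1}[e_{i}e_{j}e_{0}^{s-1}]$ and $g_{j}^{-1}[e_{i}]$, and the final substitution via Proposition \ref{alternative prop}. The only nitpick is attributional: $g_{j}^{-1}[e_{i}]=-g_{j}[e_{i}]$ is just the degree-one coefficient of $g_{j}g_{j}^{-1}=1$ rather than a consequence of (\ref{gien}), which does not affect the argument.
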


Using the expression for $g_{\mathcal{F}}[e_{l} e_{0} ^{a} e_{m} e_{0} ^{b}]$ above we see that $dg_{\mathcal{F}}[e_{i}e_{j}e_{0} ^{s-1}e_{k}]=$
\begin{align*}
& &d(S(s,1,1;k,j,i;d_{2},d_{3}))+(-1)^{s}S(s,(1),1;j,k,i;d_{3})+g_{j}[e_{0} ^{s-1}e_{k}]S(1,1;i,j)\\
& &+g_{k} ^{-1}[e_{j}e_{0} ^{s-1}]S(1,1;i,k)+(-1) ^s\sum_{0 \leq r \leq s-1} S(1+r,s-r,1;j,i,k;d_{2})\\
& &+(-1) ^{s+1}S(1,(s),1;i,j,k)     )+\sum _{1 \leq i \leq M}\alpha_{i} \omega_{i},
\end{align*}
for some $\alpha_{i} \in \mathbb{Q}_{p}[\zeta].$ By the same arguments as above we see that the hypotheses of Proposition \ref{fun prop} are satisfied, and then this proposition implies the following.  
\begin{proposition}
Suppose that $s\geq1,$ then we have $g_{\mathcal{F}}[e_{i}e_{j}e_{0} ^
{s-1}e_{k}]=$
\begin{align*}
& &S(s,1,1;k,j,i;d_{2},d_{3}))+(-1)^{s}S(s,(1),(1);j,k,i;d_{3})+g_{j}[e_{0} ^{s-1}e_{k}]S(1,(1);i,j)\\
& &+g_{k} ^{-1}[e_{j}e_{0} ^{s-1}]S(1,(1);i,k)+(-1) ^s\sum_{0 \leq r \leq s-1} S(1+r,s-r,(1);j,i,k;d_{2})\\
& &+(-1) ^{s+1}S(1,(s),(1);i,j,k).     
\end{align*}
\end{proposition}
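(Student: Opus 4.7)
My plan is to integrate the differential equation for $dg_{\mathcal{F}}[e_i e_j e_0^{s-1} e_k]$ produced by (\ref{rewritingtheequation}) term by term, using Proposition \ref{fun prop} to pin down the constants of integration. First I would extract the coefficient of $e_i e_j e_0^{s-1} e_k$ from the master equation: the leading contribution is $g_{\mathcal{F}}[e_j e_0^{s-1} e_k]\,\mathcal{F}^{*}\omega_i$, the $\omega_0$-part is $-p\,g_{\mathcal{F}}[e_i e_j e_0^{s-2} e_k]\,\omega_0$ (present only for $s\geq 2$, while $s=1$ is already treated in \textsection \ref{Comp 1}), and the remaining pieces are the logarithmic terms obtained by expanding $-g_{\mathcal{F}}\cdot pg_l^{-1} e_l g_l \omega_{\underline{l}}$ at the monomial $e_i e_j e_0^{s-1} e_k$ for $l\in\{i,j,k\}$ (contributing $\omega_{\underline{i}}$, $\omega_{\underline{j}}$, $\omega_{\underline{k}}$ residues with coefficients $g_i[\ldots]$, $g_j^{-1}[e_i]g_j[\ldots]$, etc., as already displayed at the start of \textsection 6.4).

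Next I would substitute the explicit formulas already available: Corollary \ref{formula for gz 6} gives $g_{\mathcal{F}}[e_j e_0^{s-1} e_k]$ as a short sum of $S$-functions, and inductive use of the proposition itself (on the integer $s$, with base case from \textsection \ref{Comp 1}) supplies $g_{\mathcal{F}}[e_i e_j e_0^{s-2} e_k]$. Pullback via $\mathcal{F}^{*}$ imposes an additional $d_3$-divisibility condition that matches the $\underline{\alpha}$-data in the definition of $S(\underline{s};\underline{i};\underline{\alpha})$, so each contribution can be recognized as the differential of an explicit $S$-expression; paired with the $p^{\sum s_i}$-normalizations, one assembles precisely the six summands in the stated formula, modulo a residual logarithmic form $\sum_{1\leq l\leq M}\alpha_l \omega_l$ with $\alpha_l\in \mathbb{Q}_p[\zeta]$. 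In particular, the $\mathcal{F}^{*}\omega_i$-part produces $S(s,1,1;k,j,i;d_2,d_3)$ and $(-1)^s S(s,(1),1;j,k,i;d_3)$, the iterated $\omega_0$-integration assembles $(-1)^s\sum_{0\leq r\leq s-1}S(1+r,s-r,1;j,i,k;d_2)$, and the inductive tail yields $(-1)^{s+1}S(1,(s),1;i,j,k)$ together with the polylog-times-$S$ corrections involving $g_j[e_0^{s-1}e_k]$ and $g_k^{-1}[e_j e_0^{s-1}]$.

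To close the argument I would apply Proposition \ref{fun prop} to the residual form. Its hypothesis, existence of $\lim_{N\to\infty}(lq^N a_{lq^N}+b_{lq^N})$, holds exactly as in the immediately preceding proposition: every coefficient of $g_{\mathcal{F}}[e_i e_j e_0^{s-1} e_k][z^n]$ arises as a value at $n$ of an $M$-power series function by Corollary \ref{cor on M power} and Example \ref{exam main power}(iii)--(iv), with the limit existence at $n=lq^N$ guaranteed by Remark \ref{rem main power}(ii). Proposition \ref{fun prop} then identifies the $\alpha_l$ with Fourier-type residue limits, and these are precisely the corrections that convert each outer plain entry ``$1$'' in the intermediate $S$-expressions into the parenthesized ``$(1)$'' appearing in the statement (two such upgrades per relevant summand). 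The main obstacle will be the combinatorial bookkeeping across the six summands, three divisibility families ($d_2$, $d_3$, $d_2 d_3$), and the alternation of signs induced by the group-like relation (\ref{relofalpha}); conceptually the proof is a single invocation of Proposition \ref{fun prop} after substitution, but reconciling the $\mathcal{F}^{*}$-pullback data with the $p^{\sum s_i}$-normalizations and parenthesization conventions in every term is where the real labor lies.
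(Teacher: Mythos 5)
There is a genuine error in your assembly of the differential equation, and it propagates through the whole plan. In the coefficient of $e_{i}e_{j}e_{0}^{s-1}e_{k}$ of equation (\ref{rewritingtheequation}) there is \emph{no} $\omega_{0}$-term at all: the only possible $\omega_{0}$-contributions come from $e_{0}\,\mathcal{F}^{*}\omega_{0}\,g_{\mathcal{F}}$ (which hits only words beginning with $e_{0}$) and from $-p\,g_{\mathcal{F}}\,e_{0}\,\omega_{0}$ (which hits only words ending with $e_{0}$), and the word $e_{i}e_{j}e_{0}^{s-1}e_{k}$ neither begins nor ends with $e_{0}$ since $i,k\neq 0$. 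So your term $-p\,g_{\mathcal{F}}[e_{i}e_{j}e_{0}^{s-2}e_{k}]\,\omega_{0}$ is spurious, and the induction on $s$ you build on it (using the proposition itself at $s-1$) has nothing to recurse on; you appear to have transplanted the recursion that is valid for a \emph{trailing} $e_{0}$-block (as in Corollary \ref{formula for gz 6} and in the later computation of $g_{\mathcal{F}}[e_{i}e_{j}e_{0}^{s-1}e_{k}e_{0}^{t-1}]$) to the internal block, where it does not exist. Consequently your attribution of two of the six families of terms is wrong: the sum $(-1)^{s}\sum_{0\leq r\leq s-1}S(1+r,s-r,(1);j,i,k;d_{2})$ and the term $(-1)^{s+1}S(1,(s),(1);i,j,k)$ do not arise from ``iterated $\omega_{0}$-integration'' or an ``inductive tail''; they come from the $\omega_{\underline{k}}$-coefficient $-p\,g_{\mathcal{F}}[e_{i}e_{j}e_{0}^{s-1}]\,\omega_{\underline{k}}$, where $g_{\mathcal{F}}[e_{i}e_{j}e_{0}^{s-1}]$ is supplied by Corollary \ref{formula for gz 6} (with $t=1$); all dependence on $s$ enters only through such already-known depth-two inputs, namely $g_{\mathcal{F}}[e_{j}e_{0}^{s-1}e_{k}]$ multiplying $\mathcal{F}^{*}\omega_{i}$ and $g_{\mathcal{F}}[e_{i}e_{j}e_{0}^{s-1}]$ multiplying $-p\,\omega_{\underline{k}}$, together with constant-coefficient log forms.

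Your closing step is in the right spirit and matches the paper: write the right-hand side as $d$ of an explicit combination of $S$-functions plus a residual form $\sum_{1\leq l\leq M}\alpha_{l}\omega_{l}$, check via the $M$-power series machinery (Corollary \ref{cor on M power}, Example \ref{exam main power}, Remark \ref{rem main power}) that the limits required by Proposition \ref{fun prop} exist, and conclude that the rigid analytic primitive is the regularized expression in which the outer plain entries become parenthesized. But for that step to produce the stated formula you must first have the correct list of source terms; as written, your differential equation is wrong for every $s\geq 1$ (not just small $s$), so the bookkeeping you defer to ``real labor'' cannot be made to close. Fix the extraction of the coefficient — no $\omega_{0}$-term, no induction on $s$, and the $\omega_{\underline{k}}$-coefficient must include $g_{\mathcal{F}}[e_{i}e_{j}e_{0}^{s-1}]$, $g_{\mathcal{F}}[e_{i}]g_{k}^{-1}[e_{j}e_{0}^{s-1}]$ and $g_{k}^{-1}[e_{i}e_{j}e_{0}^{s-1}]$ — and then your final Proposition \ref{fun prop} argument goes through exactly as in the paper.
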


  \subsubsection{Computation of $g_{i}[e_{j}e_{0} ^{s-1}e_{k}e_{0 } ^{t-1} ]$}
  Assume that $s,t >1.$ Then the differential equation gives that  $dg_{\mathcal{F}}[e_{i}e_{j}e_{0} ^{s-1}e_{k}e_{0}^{t-1}]=$ 
  \begin{align*}
& &g_{\mathcal{F}}[e_{j}e_{0} ^{s-1}e_{k}e_{0} ^{t-1}]\mathcal{F}^*\omega_{i}-pg_{\mathcal{F}}[e_{i}e_{j}e_{0} ^{s-1}e_{k}e_{0} ^{t-2}]\omega_{0} -p g_{i}[e_{j}e_{0} ^{s-1}e_{k}e_{0} ^{t-1}]\omega_{\underline{i}}\\
  & &-p g_{j} ^{-1}[e_{i}] g_{j}[e_{0} ^{s-1}e_{k}e_{0}^{t-1}]\omega_{\underline{j}}-pg_{\mathcal{F}}[e_{i}]g_{j}[e_{0} ^{s-1}e_{k}e_{0}^{t-1}]\omega_{\underline{j}}.
  \end{align*}

  We will use Proposition \ref{fun prop} and do induction on $t,$ starting with the formulas we found above for $g_{\mathcal{F}}[e_{i}e_{j}e_{0} ^{s-1}e_{k}]$ and $g_{\mathcal{F}}[e_{a}e_{0} ^{l} e_{b} e_{0} ^m ].$  We find that  
 $g_{\mathcal{F}}[e_{i} e_{j}e_{0} ^{s-1}e_{k} e_{0} ^{t-1}]=$ 
 \begin{align*}
 & &\frac{(-1)^{t-1}}{(s-1)!}\sum_{0 \leq r,q \atop {r+q \leq t-1}} (r+1)_{s-1}S(s+r,1-q,t-(q+r);k,j,i;d_{2},d_{3})\\
 & &+\sum_{0 \leq r \leq t-1}(-1)^{r}g_{j}[e_{0} ^{s-1}e_{k}e_{0} ^{t-1-r}]S(1,(1+r);i,j)\\
 & &+ (-1)^{t-1}g_{k} ^{-1}[e_{j} e_{0} ^{s-1}]S(1,(t);i,k)+(-1)^{s+t}S(1, (s), (t);i,j,k)\\
 & &+(-1)^{s+t+1}\sum_{0 \leq r \leq s-1}S(1+r,s-r,(t);j,i,k;d_{2})\\
 & &+ (-1)^{s+t+1}\sum_{1 \leq r \leq t} S(s, (t+1-r),(r);j,k,i;d_{3}),
 \end{align*}
  for $s\geq 1$ and $t \geq 1.$ 
  
  We also obtain that $-\zeta^{-\underline{i}}g_{i}[e_{j}e_{0} ^{s-1}e_{k}e_{0} ^{t-1}]- \zeta^{-\underline{j}}g_{j} ^{-1}[e_{i}]g_{j}[e_{0} ^{s-1}e_{k}e_{0} ^{t-1}]=$
 \begin{align*}
 & &\zeta^{-\underline{j}}\sum_{0 \leq r \leq t-2}(-1)^{r+1}g_{j}[e_{0} ^{s-1}e_{k}e_{0} ^{t-2-r}]\mathcal{X}^{(1+r)}(1;j-i)\\
 & &+ \zeta^{-\underline{k}}(-1)^{t-1}g_{k} ^{-1}[e_{j} e_{0} ^{s-1}]\mathcal{X}^{ (t-1)}(1;k-i)+\zeta^{-\underline{k}}(-1)^{s+t}\mathcal{X}^{ ( t-1)}(1, (s);j-i,k-j)\\
 & &+\zeta^{-\underline{k}}(-1)^{s+t+1}\sum_{0 \leq r \leq s-1}\mathcal{X}^{ (t-1)}(1+r,s-r;i-j,k-i;d_{2})\\
 & &+\zeta^{-\underline{i}} (-1)^{s+t+1}\sum_{1 \leq r \leq t-1} \mathcal{X}^{ (r)}(s, (t-r );k-j,i-k;n_{2})\\
 & &+\zeta^{-\underline{i}}(-1)^{s+t+1}\mathcal{X}(s, (t);k-j,i-k;n_{2})+\zeta^{-\underline{j}}g_{j} [e_{0} ^{s-1}e_{k}e_{0} ^{t-1}]\mathcal{X}(1;j-i).
\end{align*}   
Using the formula for the polylogarithmic part above, we obtain the following.
\begin{theorem}\label{Theorem main}
Assume that $s,t \geq 2. $ Then  $g_{i}[e_{j}e_{0}^{s-1}e_{k}e_{0} ^{t-1}]=$ 
\begin{eqnarray*}
& &(-1)^{s-1}\zeta^{\underline{i} -\underline{j}}{s+t-2 \choose s-1}\frac{\mathcal{X}(1;i-j)}{1-\zeta^{\underline{i}-\underline{j}}}\frac{\mathcal{X}(s+t-1;k-j)}{1-\zeta^{\underline{k}-\underline{j}}}\\
& &-\zeta^{\underline{i}-\underline{j}}\sum_{0 \leq r \leq t-2}(-1)^{r+s}{s+t-r-3 \choose s-1}\frac{\mathcal{X}(s+t-r-2;k-j)}{1-\zeta^{\underline{k}-\underline{j}}}  \mathcal{X}^{(1+r) }(1;j-i)\\
& &- \zeta^{\underline{i}-\underline{k}}((-1)^{t}\frac{\mathcal{X}(s;j-k)}{1-\zeta^{\underline{j}-\underline{k}}}\mathcal{X}^{( t-1) }(1;k-i)+(-1)^{s+t}\mathcal{X}^{ ( t-1)}(1,  (s);j-i,k-j)\\
& &+(-1)^{s+t+1}\sum_{0 \leq r \leq s-1}\mathcal{X}^{ ( t-1)}(1+r,s-r;i-j,k-i;d_{2}))\\
& &+ (-1)^{s+t}(\sum_{1 \leq r \leq t-1} \mathcal{X}^{ ( r)}(s, (t-r) ;k-j,i-k;n_{2})
+\mathcal{X}(s, (t);k-j,i-k;n_{2}))\\
& &+(-1)^s{s+t-2 \choose s-1}\zeta^{\underline{i}-\underline{j}}\frac{\mathcal{X}(s+t-1;k-j)}{1-\zeta^{\underline{k}-\underline{j}}}\mathcal{X}(1;j-i),
\end{eqnarray*}
for distinct $i,j,k.$
\end{theorem}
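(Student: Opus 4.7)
The theorem is essentially a bookkeeping consequence of the two displayed formulas that immediately precede it, so the plan is to treat it as the closing step of an induction that has already done the analytic work, and to isolate $g_{i}[e_{j}e_{0}^{s-1}e_{k}e_{0}^{t-1}]$ by evaluating the two ``correction'' terms on the left-hand side of the penultimate display.

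First I would record the starting point. Writing down the differential equation for $dg_{\mathcal{F}}[e_{i}e_{j}e_{0}^{s-1}e_{k}e_{0}^{t-1}]$ from (\ref{rewritingtheequation}), one sees that the right-hand side is expressed in terms of quantities known by induction: $g_{\mathcal{F}}[e_{j}e_{0}^{s-1}e_{k}e_{0}^{t-1}]$ (from Corollary \ref{formula for gz 6}), $g_{\mathcal{F}}[e_{i}e_{j}e_{0}^{s-1}e_{k}e_{0}^{t-2}]$ (the inductive hypothesis in $t$), and the polylogarithmic coefficients governed by Propositions \ref{usual expression prop} and \ref{alternative prop}. Applying Proposition \ref{fun prop} at $n=lq^{N}$ and letting $N\to\infty$ yields, via the five enumerated contribution rules (i)--(v) already used in the previous subsections, exactly the formula for $g_{\mathcal{F}}[e_{i}e_{j}e_{0}^{s-1}e_{k}e_{0}^{t-1}]$ displayed above the theorem, together with the relation for $-\zeta^{-\underline{i}}g_{i}[e_{j}e_{0}^{s-1}e_{k}e_{0}^{t-1}]-\zeta^{-\underline{j}}g_{j}^{-1}[e_{i}]g_{j}[e_{0}^{s-1}e_{k}e_{0}^{t-1}]$.

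Next I would multiply this identity through by $-\zeta^{\underline{i}}$ and move the second summand to the right-hand side, yielding $g_{i}[e_{j}e_{0}^{s-1}e_{k}e_{0}^{t-1}]$ as a sum of the explicit $\mathcal{X}$-terms (each multiplied by $\zeta^{\underline{i}-\underline{j}}$, $\zeta^{\underline{i}-\underline{k}}$ etc.) together with one remaining product $\zeta^{\underline{i}-\underline{j}}g_{j}^{-1}[e_{i}]\,g_{j}[e_{0}^{s-1}e_{k}e_{0}^{t-1}]$. Two evaluations then finish the job: Proposition \ref{alternative prop} with $s=1$ gives $g_{j}[e_{i}]=\mathcal{X}(1;i-j)/(1-\zeta^{\underline{i}-\underline{j}})$, and since $g_{j}$ is group-like with $g_{j}[e_{0}^{n}]=0$ one has $g_{j}^{-1}[e_{i}]=-g_{j}[e_{i}]$; similarly the group-like relation (\ref{relofalpha}) reduces $g_{j}[e_{0}^{s-1}e_{k}e_{0}^{t-1}]$ to $(-1)^{t-1}\binom{s+t-2}{t-1}g_{j}[e_{0}^{s+t-2}e_{k}]$, which by Proposition \ref{alternative prop} equals $(-1)^{s+1}\binom{s+t-2}{s-1}\mathcal{X}(s+t-1;k-j)/(1-\zeta^{\underline{k}-\underline{j}})$. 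Substituting these produces precisely the first line of the claimed formula, while the remaining lines match the contributions (i)--(v) term by term.

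The only genuine obstacle is sign and index bookkeeping: one must verify that every $(-1)^{s}$, every $(-1)^{t}$, every binomial $\binom{s+t-2}{s-1}$, and every index shift $\underline{i}-\underline{j}$ vs $\underline{j}-\underline{i}$ lines up, and that the two formal series in $\mathcal{X}^{(r)}(\ldots;n_{2})$ (for $1\leq r\leq t-1$ and the extra $r=0$ term with an ``unbracketed'' first argument) combine correctly. Nothing new is required beyond Propositions \ref{main rigid analytic}, \ref{alternative prop}, \ref{fun prop} and the group-like identity (\ref{relofalpha}); the inductive hypothesis on $t$ feeds the $\omega_{0}$-term through, and the case $t=1$ (which is the preceding proposition) anchors the induction.
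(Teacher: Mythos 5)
Your proposal follows the paper's own route: write the differential equation (\ref{rewritingtheequation}) for $dg_{\mathcal{F}}[e_{i}e_{j}e_{0}^{s-1}e_{k}e_{0}^{t-1}]$, induct on $t$ (anchored at the $t=1$ case and Corollary \ref{formula for gz 6}), extract the coefficient relation via Proposition \ref{fun prop} and the standard $\mathcal{X}$-contribution rules, and then isolate $g_{i}[e_{j}e_{0}^{s-1}e_{k}e_{0}^{t-1}]$ by substituting the polylogarithmic values $g_{j}^{-1}[e_{i}]=-g_{j}[e_{i}]$ and $g_{j}[e_{0}^{s-1}e_{k}e_{0}^{t-1}]$ via (\ref{relofalpha}) and Proposition \ref{alternative prop}. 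This is essentially identical to the paper's argument, including the sign and binomial bookkeeping in the first and last lines, so it is correct.
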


\end{document}